\title[Complexified Morse functions]       
{The Picard-Lefschetz theory of complexified Morse functions}
\author{Joe Johns}
\email{jjohns@cims.nyu.edu}
\newtheorem*{namedtheorem}{\theoremname}
\newcommand{\theoremname} {testing}
\newenvironment{named}[1]{\renewcommand{\theoremname}{#1}\begin{namedtheorem}}
{\end{namedtheorem}}
\newtheorem{thm}{Theorem}[section]
\newtheorem{problem}[thm]{Problem}
\newtheorem{prop}[thm]{Proposition}
\newtheorem{lemma}[thm]{Lemma}
\theoremstyle{definition}
\newtheorem{remark}[thm]{Remark}
\newcommand{\bbZ}{\mathbb{Z}}
\newcommand{\bbR}{\mathbb{R}}
\newcommand{\bbC}{\mathbb{C}}
\def\co {\colon\thinspace}
\newcommand{\into}{\longrightarrow}
\renewcommand{\dim}{\textup{dim \,}}
\begin{document}

\begin{abstract}    
Given a closed manifold $N$ and a self-indexing Morse function $f\co  N \into \bbR$ with up to four distinct Morse indices, we construct a symplectic Lefschetz fibration 
$\pi\co E \into \bbC$ which models the complexification of $f$ on the disk cotangent bundle, 
$f_\bbC \co  D(T^*N) \into \bbC$, when $f$ is real analytic. 
By construction, $\pi\co  E \into \bbC$ comes with an explicit regular fiber $M$ and  vanishing spheres  $V_1, \ldots, V_m \subset M$, one for each critical point of $f$. Our main result is that $\pi\co  E \into D^2$ is a good model for $f_\bbC\co D(T^*N) \into \bbC $ in the sense that $N$ embeds in $E$ as an exact Lagrangian submanifold, and in addition, $\pi|_{N} = f$ and 
$E$ is homotopy equivalent to $N$.  There are several potential applications in symplectic topology, which we discuss in the introduction.
\end{abstract}

\maketitle



\section{Introduction}\label{intro} Complexifications in various forms have been considered for some time in many different fields, as in for example \cite{BW}, \cite{G}, \cite{K}, \cite{Lempert}, \cite{A'C}, \cite{AMP}. The central problem is usually to understand the relation between the complexified object and its real counter part. In this paper we are concerned with the relation between Morse functions and their complex versions, Lefschetz fibrations. Historically, Morse functions have of course played a central role in elucidating the differential topology of manifolds.
On the other hand, Lefschetz fibrations played a large role in the study of the topology of algebraic varieties. They have a formally similar definition to Morse functions (they are proper holomorphic maps on complex analytic varieties with generic singularities modeled on $z_1^2 + ... + z_n^2$), but they are in fact quite different in flavour.
The most obvious difference one can point to is that Morse functions have regular level sets which differ in topological type, whereas all the regular fibers of a Lefschetz fibrations are diffeomorphic.
(See, for example, \cite{Bott}, \cite{L} for surveys of Morse theory and the theory of Lefschetz fibrations.)
\\
\newline  Here is one way to complexify a Morse function to get a Lefschetz fibration. 
(Actually, the method we now present is somewhat naive, see remark \ref{pivsfC}, but it serves as a useful starting point.) Suppose that $N$ is a real analytic manifold and $f\co N \into \bbR$ is a real analytic Morse function.
Then, in local charts on $N$, $f$ is represented by some convergent power series with real coefficients;
if we complexify these local power series to get complex analytic power series (with the same coefficients) we obtain a complex analytic map on the disk bundle of $T^*N$ of some small radius $\epsilon>0$,
$$f_\bbC\co D_\epsilon(T^*N) \into \bbC,$$
called the \emph{complexification} of $f$. In favorable circumstances, $f_{\bbC}$ can be regarded as a Lefschetz fibration. (The main issue which may prevent $f_\bbC$ from being a  Lefschetz fibration is that parts of some fibers may be ``missing''; this will happen if  the parallel transport vector-field between regular fibers is \emph{incomplete}. See remark \ref{pivsfC} for more on this.) 
Actually, we want to think of $f_\bbC$ as a symplectic Lefschetz fibration,
which is an extension of the notion  to the symplectic category due to Donaldson
(see \cite{DA,DB}). There are some slightly different possible definitions and we follow the one in 
\cite{LES} (which also satisfies the definition in \cite{S08}); this is designed for work in the category of exact symplectic manifolds (with nonempty boundary). 
Roughly, a symplectic Lefschetz fibration $\pi\co E \into \bbC$ is a fiber bundle with symplectic fibers, but with some isolated singular points modeled on $z_1^2 + ... + z_n^2$.  
\\
\newline It is natural to ask: 
\emph{To what extent can the complex topology of $f_\bbC$ be described in terms of  the Morse theory of $f$?}
The central problem here is to analyze the Picard-Lefschetz theory of $f_\bbC$, and we are particularly interested the symplectic view point (as opposed to classical topology, which deals with cycles):

\begin{problem}  \label{complexify}
Describe the generic fiber of $f_\bbC$ as a symplectic manifold $M$, and describe its vanishing cycles as Lagrangian spheres in $M$. 
\end{problem}

Basically this entails reconstructing the picture of the regular complex fiber $M$ starting only with knowledge of the Morse theory on the real part $N \subset T^*N$.  This problem is very natural from the point of view of singularity theory, and indeed our approach  is greatly influenced by work of A'Campo \cite{A'C} which treats the case where $f$ is a real polynomial on $N = \bbR^2$, 
and $f_\bbC\co \bbC^2 \into \bbC$ is the same polynomial on $\bbC^2$. 
\\
\newline In our approach  we first take  a Riemannian metric $g$ such that $(f,g)$ is Morse-Smale and  we only assume $(N,f,g)$ is smooth, not necessarily real analytic. Given the corresponding handle decomposition of $N$, we construct an exact symplectic manifold $M$ of dimension $2 \dim N-2$ together with some exact Lagrangian spheres $L_1, \ldots, L_m \subset M,$ one for each critical point of $f$.  Then, given $(M, L_1, \ldots, L_m)$, there is a unique (up to deformation) symplectic Lefschetz fibration $\pi\co E \into \bbC$, with generic fiber $M= \pi^{-1}(b)$ and vanishing spheres  $L_1, \ldots, L_m$ (see \cite[\S 16e]{S08}).
Theorem $A$ below shows that $(E,\pi)$ and $(D(T^*N), f_\bbC)$ have the same key features. In this sense $(E,\pi)$ is a good model for $(D(T^*N), f_\bbC)$ and $(M, L_1, \ldots, L_m)$ is very likely a correct answer to problem \ref{complexify}. In any case, for applications (see \S \ref{motivations})
we need not use $f_\bbC$; instead we will always use $(E,\pi)$. Thus the question of whether $E \cong D(T^*N)$ and $\pi \cong  f_\bbC$ is not pressing for the moment; still this question remains an interesting one and we intend to pursue it in future work, see remark \ref{pivsfC} below for more on that.

\begin{named}{Theorem A} Assume $N$ is a smooth closed manifold and  $f\co N \into \bbR$ is self-indexing Morse function with either two, three, or four critical values: $\{0,n\}$, $\{0,n,2n\}$, or $\{0,n,n+1,2n+1\}$. Let $(M, L_1 \ldots, L_m)$ be the data from the construction we discussed above (which depends in addition on a Riemannian metric $g$ on $N$ such that $(f,g)$ is Morse-Smale).  Let $\pi\co E \into \bbC$ be the corresponding symplectic Lefschetz fibration 
with fiber $M$ and vanishing spheres $(L_1, \ldots, L_m)$.
Then, $N$ embeds in $E$ as an exact Lagrangian submanifold so that: 
\begin{itemize}
\item all the critical points of $\pi$ lie on $N$, and in fact $Crit(\pi) = Crit(f)$; 
\item $\pi(N)$ is a closed subinterval of $\bbR$; and,
\item $\pi|N = f\co N \into \bbR$ (up to reparameterizing $N$ and $\bbR$ by diffeomorphisms).
\end{itemize}
\end{named}

Theorem $A$ appears below as Theorem \ref{main}. As a step towards proving $E \cong D(T^*N)$, we also give a fairly detailed sketch that $E$ is homotopy equivalent to $N$ in Proposition \ref{Prophomotopyequiv}. (See also remark \ref{pivsfC} for a different sketch of the stronger statement, $E \cong D(T^*N)$, which is less detailed.)

\begin{remark} The contruction of $(M, L_1, \ldots, L_m)$ and $(E,\pi)$, and the proof of Theorem $A$, all work equally well for non-closed manifolds $N$.
\end{remark}

The main point of  Theorem $A$ is the construction of  the exact Lagrangian embedding $N \subset E$. The other conditions on $N$ are rather natural, and in fact their main use here is to guide the construction of $N \subset E$.
The hypotheses on $f$ ensure that the contruction of $M$ is relatively easy. 
In a sequel paper, to appear later, we will give a detailed treatment of the  construction of $M$ in general, which is more complicated. (See \S \ref{Mgeneral} of this paper for a partial sketch of that.) The proof of Theorem $A$ will then carry over easily to the general case. (Indeed, even in this paper our proof of Theorem $A$ focuses first on the case where $f$ has just three Morse indices  
(see \S \ref{basics}-\ref{ConstructionN}). Then we explain how the proof carries over easily to the case where $f$ has four Morse indices in \S \ref{3-manifolds}.)
Later, we will also flesh out the ideas relating $(D(T^*N), f_\bbC)$ and $(E,\pi)$ sketched in remark \ref{pivsfC} below. For more about the contents and organization of this paper, see \S \ref{overview} below.

\begin{remark}\label{pivsfC} The precise relationship of $\pi$ to $f_\bbC$ is not so obvious, first, because $\pi$ apparently depends on the metric $g$ whereas $f_\bbC$ does not, and second, because $\pi$ can arise from non-analytic data $(f,g)$, whereas $f_\bbC$ must have an analytic datum $f$. Finally, and most importantly, $f_\bbC$ is not really a  Lefschetz fibration  in the strict sense if one constructs it in the naive way we described. The reason is the regular fibers of $f_\bbC$ are not 
going to be diffeomorphic. Indeed, for a given real regular value $x \in \bbR$, the complex level set
$f_\bbC^{-1}(x)$ will be symplectomorphic to  $D(T^*(f^{-1}(x)))$, which is the disk cotangent bundle of the corresponding real level set $f^{-1}(x)$. But this fiber is much too small: If $f_\bbC$ was
a Lefschetz fibration with \emph{complete} parallel transport vector fields (where the connection is given by the symplectic orthogonal to the fibers), then all the regular fibers would be symplectomorphic, hence each regular fiber should contain \emph{all} the real regular level sets of $f$ as Lagrangian submanifolds (because we could parallel transport all these Lagrangian submanifolds into any fixed regular fiber). One can think of $\pi\co E \into \bbC$ as a larger (genuine) Lefschetz fibration which does have this property.
We conjecture that the correct picture relating $(E, \pi)$ and $(D(T^*N), f_\bbC)$ 
is that $D(T^*N)$ embeds into $E$ as a kind of diagonal subset intersecting each regular fiber $\pi^{-1}(x)$, $x \in \bbR$, in a Weinstein neighborhood of the real level set $f^{-1}(x)$, as we said before. Then $E$ should be conformally exact symplectomorphic to $D(T^*N)$ by a retraction along a Liouville type vector field (namely, a variably re-scaled version of the symplectic lift of $\frac{\partial}{\partial x}$ from the real line), and $\pi|_{D(T^*N)}$ should be deformation equivalent to $f_\bbC$. 
\end{remark}
\subsection{Overview}\label{overview}  The main purpose of this paper is to prove Theorem $A$, see Theorem \ref{main} below. There are three  steps in the proof. First, in \S \ref{fiber}, we construct the regular fiber $M$ and vanishing spheres $L_1, \ldots, L_m$. (The construction is based on some techniques explained in \S \ref{MorseBott}.)
 Second, in \S \ref{basics}-\ref{computingVC}, we construct a symplectic Lefschetz fibration $\pi\co 
E \into D^2$ with this regular fiber and these vanishing spheres. (See \S \ref{Esketch} to see why this
construction is not immediate.) Third, in \S \ref{ConstructionN}, we construct an exact Lagrangian embedding $N \subset E$ satisfying the conditions in Theorem $A$. (See \S \ref{Nsketch} for a sketch.)
\\
\newline 
Once the construction of $N \subset E$ is carried out in detail in one particular case, it is  straight-forward to see how it works in any other case. For this reason we first concentrate on the case where $f$ has three distinct Morse indices $0,n,2n$, focusing on the case where $\dim N = 4$; 
this takes up the bulk of the paper, \S \ref{basics}-\ref{ConstructionN}. 
In \S \ref{3-manifolds} we explain how  the proof carries over easily to the case when $f$ has four distinct Morse indices $0,n,n+1, 2n+1$, focusing on the case $\dim N =3$.
In general, for any fixed number of Morse indices for $f$, all three steps in the proof  are  essentially the same as the dimension of $N$ varies. 
(See \S \ref{2nfiber}, for example, to see why this is so.) That is why we look at the slightly
more concrete cases where $\dim N =4$ and $\dim N =3$. (We focus on the case $\dim N = 4$ rather than $\dim N = 2$ because the latter case is actually a little confusing from the general point of view, because of its very low dimension.)

\begin{remark}
 As it happens, closed 4-manifolds admitting handle decompositions without 1- and 3-handles form a fairly rich class of examples. This class includes $\bbC P^2$, $\overline{\bbC P^2}$,  and elliptic surfaces (assuming simply-connected with sections, see \cite{GS}), and it is closed under connected sum (so blow up in particular). 
It is not known if it includes all simply-connected closed 4-manifolds, or even all hypersurfaces in $\bbC P^3$; this has been a long standing question. (Recently, however, a famously conjectured  counter-example, the Dolgachef surface \cite{HKK}, has  been proved by Akbulut \cite{Ak} to be in this class as well.) See \cite{HKK}, \cite{Kirby}, \cite{GS}  for more extensive discussions.
\end{remark}

\subsubsection{Sketch of the construction of $(E, \pi)$} \label{Esketch}
Given $(M, L_1, \ldots, L_m)$ there is a standard (and fairly easy) construction for producing a Lefschetz fibration with fiber $M$ and vanishing spheres $L_1, \ldots, L_m$. (See \cite[\S 16e]{S08}.)
We, however, construct $(E, \pi)$ in a different, more explicit, way in order to facilitate the construction of $N \subset E$. Actually, most of the difficulty for us  is concentrated in this non-standard construction of  $(E,\pi)$ (see \S \ref{basics}-\ref{computingVC}) after which the contruction of $N \subset E$
is relatively easy (see \S \ref{ConstructionN}). 
\\
\newline Our strategy for constructing $(E, \pi)$ is as follows. For simplicity of notation assume that $f$ has just one critical point of each index, say $x_0, x_2, x_4$. 
Our aim is for  $\pi\co E \into D^2$  to have exactly three critical values $c_0 < c_2 < c_4$; these will lie on the real line and correspond to the critical values of $f$. We first construct three Lefschetz fibrations, one for each critical value $c_i$: 
$$\pi_i\co E_i \into D_s(c_i), \, i=0,2,4,$$
where $D_s(c_i)$ is a small disk around $c_i$ of radius $s>0$.
The aim is to construct $(E,\pi)$ so that $(E_i, \pi_i)$ is equal to the restriction of $\pi$ to a small disk around $c_i$, namely $ (E|_{D_s(c_i)}, \pi|_{D_s(c_i)}) = (E_i, \pi_i)$.
Each $\pi_i$ will have a unique critical point, say $x_i$, with just one vanishing sphere corresponding to $x_i$. In this case there is a standard construction (namely \cite[lemma 1.10]{LES}) which we use to produce each $\pi_i$ (see \S \ref{pis}). 
The regular fiber of each $\pi_i$, say $M_i$, will be exact symplectomorphic to $M$. 
The key new ingredient in our construction of $(E, \pi)$ is a twisting operation 
$$M \leadsto T_{\pm \pi/2}(M)$$
that we use to define $M_2 = T_{\pi/2}(M)$. Roughly, $T_{\pi/2}(M)$ is defined by deleting a 
Weinstein neighborhood of some chosen Lagrangian sphere and then gluing it back in with 
a $\pi/2$ twist. (See \S \ref{pi2} for the precise definition; we will explain its main properties in a moment.) By construction, $\pi_0$,$\pi_2$,$\pi_4$  have regular fibers given by canonical isomorphisms as follows:
 $$\pi_0^{-1}(c_0+s) \cong M,$$
 $$\pi_2^{-1}(c_2+s) \cong M_2,$$
 $$\pi_4^{-1}(c_4-s) \cong M_2.$$ 
The main point of the twisting operation  $M \leadsto T_{\pi/2}(M)$ is that 
there is a canonical isomorphism $\pi_2^{-1}(c_2-s)\cong M$ (see lemma \ref{leftfiber=M})
and a natural exact symplectomorphism 
$$\tau\co M \into M_2 = T_{\pi/2}(M)$$
such that, under the canonical isomorphisms $\pi_2^{-1}(c_2-s)\cong M$ and 
$\pi_2^{-1}(c_2+s)\cong M_2$, $\tau$  coresponds precisely to 
the symplectic  transport map
$$\pi_2^{-1}(c_2-s) \into  \pi_2^{-1}(c_2+s)$$ 
along a half circle in the lower half plane from $c_2-s$ to $c_2+s$ (see lemma \ref{halftwist}).
Thus, $\tau$ is somewhat like a ``half generalized Dehn-twist'' (compare with \cite[\S 16c]{S08}).
\\
\newline To construct  $\pi\co  E \into D^2$ (see \S \ref{pi}), first consider the boundary connected sum of $D_s(c_2)$ to $D_s(c_0)$, where we delete neighborhoods of $c_2-s$ and $c_0+s$ and then identify
along the resulting boundaries. Then, similarly, we boundary connect-sum $D_s(c_2)$ to $D_s(c_4)$ at $c_2+s$ and $c_4-s$. This yields 
$$S = D_s(c_0) \#D_s(c_2) \# D_s(c_4) \cong D^2.$$
Then $(E,\pi)$ is defined to be the Lefschetz fibration over $S \cong D^2$
obtained by identifying the corresponding fibers of $(E_2,\pi_2)$ at $c_2-s$ and $c_2+s$
respectively with the fiber of $(E_0, \pi_0)$ at $c_0+s$,  and the fiber of $(E_4, \pi_4)$ at $c_4-s$.
For this we use the identifications  we mentioned above
$$\pi_0^{-1}(c_0+1) \cong M \cong \pi_2(c_2-1) $$
$$\pi_2^{-1}(c_2+1) \cong M_2 \cong \pi_4(c_4-1).$$
(This operation on Lefschetz fibrations is called  \emph{fiber connected sum}--see \S \ref{fiberconnectsum}.)

\subsubsection{Sketch of the construction of $N \subset E$}\label{Nsketch}

To construct the required exact Lagrangian embedding $N \subset E$ we use a handle-type decomposition of $N$ induced by the Morse function $f$, which is due to Milnor \cite[pages 27-32]{Milnor}.
For example, let us take the usual handle-decomposition of $N = \bbR P^2$ with three handles. Then,  in the corresponding Milnor decomposition there are four pieces:
\begin{figure}
\begin{center}
 \includegraphics[width=4in]{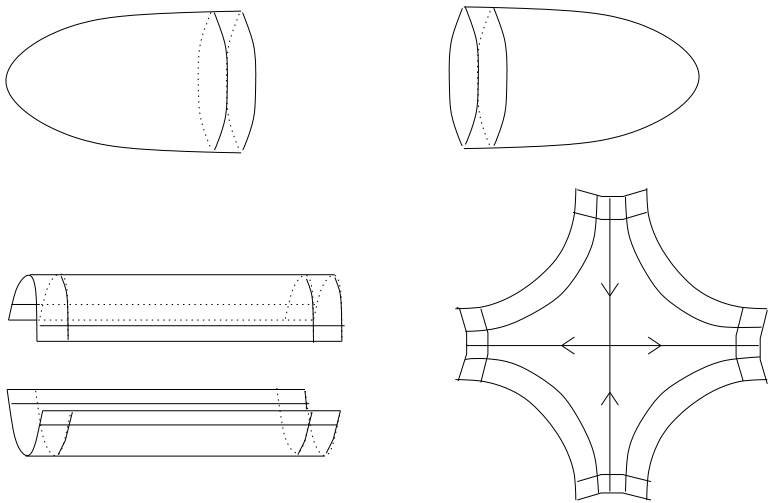} 
\caption{In the case $N= \bbR P^2$, the pieces $N_0$,$N_2$ (top), $N_1^{triv}$ (bottom left), $N_1^{loc}$ (bottom right).
The overlap regions are also indicated.}
\label{handledec}
\end{center}
\end{figure} 
First, there are $N_0 = D^2$ and $N_2 = D^2$, which are the same as the usual 0- and 2-handles. Then there is 
$$N_1^{loc} = \{ x \in \bbR^2 :    |q_2(x)| \leq 1,\, |x|^4  - q_2(x)^2 \leq \delta \},$$
where $\delta>0$ is some small number and $q_2(x) = x_1^2-x_2^2$.
Here, $N_1^{loc}$ plays the role of the 1-handle, but it is diffeomorphic to polygon with eight edges (see figure \ref{handledec}), whereas a usual 1-handle is diffeomorphic to $D^1 \times D^1$, which has four edges. For the last piece, suppose that the $1-$handle (in the usual handle-decomposition) is attached using an embedding
$$\phi\co S^0 \times [-\epsilon,\epsilon] \into S^1 = \partial N_0.$$
Then the last piece is
$$N_1^{triv} = [S^1 \setminus \phi(S^0 \times (-\epsilon/2,\epsilon/2))] \times [-1, 1].$$
This last piece has no analogue in a usual handle-decomposition; roughly, it fills in the rest of the space in $N$ after $N_0, N_2, N_1^{loc}$ are glued together. See figure \ref{handledec} for a picture of the pieces in the Milnor decomposition of $N = \bbR P^2$.
\\
\newline  Now consider the similar case when $\dim N =4$, and 
$f$ has three critical points $x_0, x_2, x_4$, with Morse indices $0,2,4$. 
Then there is a similar Milnor decomposition of $N$ with, for example, $N_0 = D^4 = N_4$.
To construct $N \subset E$ in this case (see \S \ref{ConstructionN}),  
we define several Lagrangian  manifolds $N_0 \subset E_0$, $N_4 \subset E_4$, 
$N_2^{loc} \subset E_2$, $N_2^{triv} \subset E_2 $ with boundary (with corners) which correspond to exactly to the pieces in the Milnor decomposition of $N$. Because of the way the fibers of the $E_i$ are constructed and the way the $E_i$ are  glued together, these $N_i$ glue together exactly as in the Milnor decomposition of $N$ (in particular, with the correct framings). From this it follows that the union $\cup_i N_i \subset E$ is smooth and diffeomorphic to $N$.

\subsection{Organization}

Here is a summary of the contents of this paper.

\begin{itemize}
\item[\S \ref{MorseBott}] We explain some techniques necessary for constructing the regular fiber $M$ and the vanishing spheres in $M$. (We only sketch the main ideas in this paper. See \cite{JA} for details.) The main technique involves attaching Morse-Bott type handles in the Weinstein category. This in turn is related to a generalization of Lagrangian surgery to the case where the Lagrangians intersect along any submanifold. This Lagrangian surgery construction is used to define the vanishing spheres, and it is also used in the construction of $M$ in general.

\item [\S \ref{fiber}] We explain how to construct the regular fiber $M$ 
and the vanishing spheres in $M$. We discuss the case when $f$ has three or four distinct Morse indices in detail, and we give a partial sketch of the general case.

\item[\S \ref{basics}] We review some basic constructions for symplectic Lefschetz fibrations. Already in this section we specialize to situations relevant to the case where $\dim N =4$,
and $f$ has only three distinct Morse indices $0,2,4$.

\item[\S \ref{pis}] We construct three  Lefschetz fibrations $\pi_i\co E_i \into D^2$, $i=0,2,4$
as discussed sketched in \S \ref{Esketch} above.

\item[\S \ref{pi}] We construct $(E,\pi)$ as the fiber connected sum of $(E_0, \pi_0)$, $(E_2, \pi_2)$ 
$(E_4, \pi_4)$, as sketched in \S \ref{Esketch} above.

\item[\S \ref{computingVC}] We check that the vanishing spheres of $(E,\pi)$ with respect to certain vanishing paths are indeed the expected Lagrangian spheres in $M$.

\item[\S \ref{ConstructionN}] We construct an exact Lagrangian embedding $N \subset E$, as  
sketched in \S \ref{Nsketch}, which satisfies the statements in Theorem $A$. (See Theorem \ref{main} and Proposition \ref{Prophomotopyequiv}.)

\item[\S \ref{3-manifolds}] We look at the case when $f\co N \into \bbR$ has Morse indices
$0,n,n+1,2n+1$, focusing on the case $\dim N =3$.
We explain the construction of $(E,\pi)$, and also the construction 
of $N \subset E$. 

\end{itemize}

\subsection{Motivations from symplectic topology} \label{motivations}
In the rest of this introduction we will attempt to motivate Problem \ref{complexify}, and our proposed solution Theorem $A$, from the point of view of symplectic topology. The most basic idea is that Lefschetz fibrations on a symplectic manifold $X$ give a non-unique description of the total space $X$ in terms of a regular hypersurface $Y= \pi^{-1}(b)$ and the vanishing spheres in $Y$ (analogous to a handle-decomposition in differential topology).  Thus, in principle, well-understood Lefschetz fibrations
for cotangent bundles should lead to insights about their symplectic topology. 
For more on this line of thought, see section \ref{bifibrations} below.
\\
\newline A more subtle and surprising fact is that Lefschetz fibrations on a symplectic manifold $X$ can also be used to analyze  arbitrary Lagrangian submanifolds $L \subset X$, and this is the more immediate source of our  interest in Problem \ref{complexify}. 
The original geometric idea, due to Donalson, is that $L \subset X$ should be some kind of combination of the Lefschetz thimbles, obtained by surgery operations. This has only been partially developed so far  in the form of matching paths and matching cycles, as in \cite{AMP}, \cite{HMS}, \cite[\S 16g]{S08} (see \S \ref{matchingpaths} for the definition and for some plans to develop this further). Nevertheless there is a rigorous algebraic version, due to Seidel \cite[Corollary 18.25]{S08}, which is formulated 
in 
the Fukaya category of $X$: 
\begin{itemize}
 \item[($\ast$)] \emph{Any $L \subset X$ can be expressed as a combination of 
  the Lefschetz thimbles by repeatedly forming mapping cones.}
\end{itemize}
Implicitly, this takes place in a context where ``mapping cone'' makes sense, namely the so-called \emph{derived} Fukaya category of $X$. (Conjecturally, mapping cones correspond to Lagrangian surgery, and so the algebraic and geometric view points should coincide.)
\\
\newline Theorem $A$ feeds into both of these (algebraic and geometric) ideas. First, and foremost,
it provides a good class of Lefschetz fibrations to be used in combination with Seidel's decomposition ($\ast$). In future work we will use this idea to study exact Lagrangian submanifolds $L \subset T^*N$, along the lines of \cite{S04} (see \S \ref{nearby} for more on this). Second,  Theorem $A$ helps  to develop Donalson's original geometric idea, because we indeed construct $N \subset E$
by doing surgery operations among the Lefschetz thimbles. (Actually, while this is essentially true, there is still some work to be done to relate what we do in this paper to that view point.)
Thus the construction of $N$ serves as a model for how to decompose any given Lagrangian $L \subset E$ fibering over some path in terms of the Lefschetz thimbles.
A closely related idea is to define \emph{generalized matching paths} for arbitrary manifolds. This involves going in the reverse direction: One starts with an embedded path $\gamma \subset \bbC$ which passes through several critical values; then, one  \emph{constructs} a Lagrangian  $L \subset E$ which fibers over $\gamma$, assuming that the Lefschetz thimbles lying over $\gamma$ satisfy suitable \emph{matching conditions}. We will explore both these ideas in future work  (see \ref{matchingpaths} for more details).
\\
\newline Finally, we mention there is a  conjecture of Seidel \cite[Remark 7.1]{S00II} which suggests a way to use Theorem $A$, Theorem $B$ in \cite{J}, and ($\ast$) to relate the two approaches of 
Fukaya-Seidel-Smith \cite{FSS,FSSB,S08} and Nadler-Zaslow \cite{NZ, N} for analyzing the Fukaya category of a cotangent bundle (the first approach being Picard-Lefschetz theory, and the second being comparison with constructible sheaves on $N$). We will not elaborate on this here, but we refer the reader to \cite{JB} for more on this.

\subsubsection{Bifibrations on cotangent bundles} \label{bifibrations} In the most optimistic view, one can 
start with a  Lefschetz fibration on $T^*N$ (or any symplectic manifold) and  
proceding inductively by introducing a new Lefschetz fibration on the fiber and, continuing in this way, reduce the symplectic topology of the total space to some combinatorial data. 
This strategy was successfully carried out very explicitly in the case of the quartic surface in \cite{HMS}, and then generalized to a more abstract general setting in \cite{S08}. In general, one 
often has very little detailed knowledge of the Lefschetz fibration, and it may be very complicated.
For example, if we are studying a closed symplectic 4-manifold $X$, the only thing to do in general is take a Donaldson pencil on $X$ (or maybe a variation on that which maps to $\bbC P^2$). Then, the combinatorial data one gets in this way cannot be reasonably handled, and in fact basic questions reduce to some hard  combinatorial group theory problems, as in \cite{A}. With this in mind, it is intriguing to start with one of our relatively simple and very explicit Lefschetz fibrations $\pi\co E \into D^2$ (and let us suppose for simplicity that $E \cong D(T^*N)$ as in remark \ref{pivsfC}). Then we can ask: Is there a similar Lefschetz fibration, say
$$\pi_2 \co M \into D^2$$
defined on the regular fiber? Naively, this seems plausible since $M$ is obtained, roughly speaking, by plumbing several disk cotangent bundles together (see \S \ref{Mgeneral}), and one would hope that the model complexifications on each of these disk cotangent bundles can be made to agree on the overlaps, so that they patch together to yield a fibration on $M$. (The actual construction, though, must combine the fibrations on each disk cotangent bundle in a more sophisticated way, 
by combining the regular fibers of each of the fibrations into one new regular fiber for the putative fibration on $M$.) In any case, once 
this is known one would like to extend this to the
slightly more sophisticated set up of a bifibration on $E$. Roughly, this is a holomorphic map $E \into \bbC^2$, with generic singularities, encoding a family of Lefschetz fibrations on the fibers of a Lefschetz fibration on $E$. (For the precise definition, see \cite[\S 15e]{S08}.) 
As pointed out to me by Maydanskiy, one potential application of such a bifibration (together with work in progress of Seidel) would be to construct exotic cotangent bundles along the same lines as  Maydanskiy's recent work on exotic sphere cotangent bundles \cite{Maksim}. 
More tentatively, such bifibrations (and similar structures on the fibers of $\pi_2$, etc.)
may lead to interesting matching relations among Lagrangian submanifolds in 
$E$, $M$, etc., in a spirit similar to \cite{HMS}, \cite{S08}. (See section \ref{matchingpaths} below for more about matching conditions which apply to  Lagrangian submanifolds more general than spheres.)

\subsubsection{Donaldson's decomposition and generalized matching paths} \label{matchingpaths}
Donaldson's idea is as follows. 
First, one assumes that $\pi$ maps $L$ onto an embedded path $\gamma$ such that $$f = \gamma^{-1} \circ (\pi|L) \co L \into [0,1]$$
is a Morse function, either by constructing a suitable $\pi$ for a given $L$ (as achieved in \cite{AMP}), or perhaps by deforming the given $L$ and $(E,\pi)$. Then, each critical point of $f$ is a critical point of $\pi$ lying on $L$, and each unstable and stable manifold of $f$ is part of a Lefschetz thimble of $\pi$.
The expectation  is that $L$ is isotopic to a surgery-theoretic combination of all these Lefschetz thimbles. This is well-understood when $L$ is a sphere and $\gamma$ runs between just two critical values: $L$ is then the union of two Lefschetz thimbles meeting at a common vanishing sphere and $\gamma$ is called a \emph{matching path}, see  \cite[\S 16g]{S08}, \cite{HMS}. 
\\
\newline As we mentioned above, the proof of Theorem $A$ involves constructing $N \subset E$ by doing successive surgery operations involving the Lefschetz thimbles, just as in Donaldson's proposed decomposition. More precisely, let us assume for convenience of notation that 
$f$ has just one critical point of each index. Then, we construct a sequence of (not neccesarliy closed) Lagrangian submanifolds $N_0, N_1, \ldots, N_m$, where $N_j$ is diffeomorphic to the $j$th \emph{sublevel} set of $f \co N \into \bbR$, i.e. $N_j \cong \{f \leq c_{j} - \epsilon \}$, where $\epsilon$ is small, $c_j$ is the $j$th critical value of $f$, and $N_{j+1} = N_j \# \Delta_j$ is obtained by a kind of Lagrangian surgery.
In future work we will develop two ideas suggested by this decomposition.
First, if $L\subset E$ is an arbitrary Lagrangian in the total space of an arbitrary Lefschetz fibration
then the above construction can serve as a model for how to decompose $L$ as a surgery-theoretic combination of the the Lefschetz thimbles, thus making Donalson's idea more precise.
Second, we will formulate the notion of a \emph{generalized matching path} for arbitrary Lagrangians $L$. 
\\
\newline The original notion of a mathing path gives a way of constructing 
Lagrangian spheres in the total space of a Lefschetz fibration.
To do this one assumes there is a path $\gamma\co [0,1] \into \bbC$ joining two critical values $c_1, c_2$ such that the two Lefschetz thimbles $\Delta_1, \Delta_2$ over $\gamma|_{[0,\frac{1}{2}]}$ and $\gamma|_{[\frac{1}{2}, 1]}$ have (Lagrangian) isotopic vanishing spheres $V_1, V_2$ in the fiber over $\gamma(\frac{1}{2})$ (see \cite[\S 16g]{S08}, or \cite[lemma 8.4]{AMP}). 
The generalization suggested by our proof of Theorem $A$ is roughly as follows. Take a path $\gamma$
joining several critical values $c_1, \ldots, c_m$, say $\gamma(t_j) = c_i$, $j =0,1\ldots, m$.
The simplest matching condition one could hope for would just involve Lefschetz thimbles of adjacent pairs of critical points: one would assume that the Lefschetz thimbles (up to isotopy) meet in 
the expected sphere given by Morse theory. However, there are important framing conditions missing here,
so the actual matching conditions will be inductive. Let $N_0$ be the the Lefschetz thimble of $c_0$, fibered over $\gamma|_{[t_0, \frac{1}{2}(t_{0}+t_1)]}$. For $j \geq 1$, assume inductively we have constructed  a manifold $N_{j-1}$ (built out of the Lefschetz thimbles corresponding to  $c_0, \ldots, c_{j-1}$).
Then the matching condition will involve $N_{j-1}$ and the Lefschetz thimble $\Delta_j$ at $c_i$
which fibers over $\gamma|_{[\frac{1}{2}(t_{j-1}+t_j), t_j]}$. 
First, the intersection of $\partial N_j$ and $\partial \Delta_j$ in the fiber must be a certain sphere, whose dimension is dictated by Morse theory. Second, this sphere will have a framing in $\partial N_j$ which comes from $\Delta_j$, and this must also be as dictated by Morse theory. 
Roughly, the framing works as follows. First, take a Weinstein neighborhood $D(T^*\Delta_j) \subset E$. Let $S_j$ denote the sphere $S_j = \partial \Delta_j \cap \partial N_j$ and assume that $S_j$ is 
bounded by a disk $U_j \subset \Delta_j$, where $\Delta_j \cong D^n$ and $U_j \cong D^k \subset D^n$.
(If we pretend $N$ exists for a moment, then $U_j$ is meant to be $\Delta_j\cap N$, which is part of the unstable manifold $U(x_j)$.)
Then the $k-$handle in $N$ corresponding to $x_j$ is represented by the disk conormal bundle 
$$D(\nu^*U_j) \subset D(T^*\Delta_j)$$
and the framing of the handle is encoded in the way $D(\nu^*U_j)$ meets $\partial N_{j-1}$.
(Here, the parameterization $\Delta_j \cong D^n$ and the Weinstein embedding $D(T^*\Delta_j) \subset E$ should be determined to a large degree by a  canonical (up to isotopy) parameterization of $\partial \Delta_j$, as in \cite[\S 16b]{S08}.)
\\
\newline A third point of interest is to compare the Donalson and Seidel decompositions.
Conjecturally, the mapping cone of a morphism between two Lagrangians $L_1, L_2$, let's say corresponding to a single point in $L_1 \cap L_2$, say $\alpha \in CF(L_1,L_2)$, 
is isomorphic to the Lagrangian surgery of $L_1$ and $L_2$,  say $L_1 \# L_2$:
$$Cone(\alpha\co L_1 \rightarrow L_2) \cong L_1 \# L_2,$$
and a version of this is known if $L_1$ is a Lagrangian sphere, see \cite[\S 17j]{S08}. It would be interesting to prove that $N_{j+1} = N_j \# \Delta_{j+1}$ is isomorphic to $Cone(N_j \rightarrow \Delta_{j+1})$ in the above matching path construction. (There is a corresponding result for the case of standard matching paths, \cite[lemma 18.20]{S08}.) This would show that whenever we have a generalized matching path with corresponding Lagrangian $L$, there is a Donaldson type decomposition of $L$ which coincides with a Seidel decomposition of $L$. 
Using this together with \cite{AMP}, for example, one might be able to prove a new version of Seidel's decomposition ($\ast$). This version  would rely on choosing different Lefschetz fibrations for different Lagrangians, rather than having one fixed Lefschetz fibration.

\subsubsection{Lagrangian submanifolds in $T^*N$} \label{nearby} 

Here we elaborate a little on how Theorem $A$ is relevant for the study of exact Lagrangian submanifolds $L \subset T^*N$ (see also the introduction to \cite{JA}). Our basic goal is to prove for certain $N$ that any closed exact Lagrangian submanifold $L \subset T^*N$ is Floer theoretically equivalent to $N$. This means in particular that $HF(L,L) \cong HF(N,N)$, so that $H^*(L) \cong H^*(N)$, and
$HF(L, T_x^*N) \cong HF(N, T_x^*N)$, so that $deg(L \into N) = \pm 1$.
Of course, results of this kind have been obtained for arbitrary manifolds $N$ in \cite{FSS, FSSB} and \cite{N, NZ}. We want to  consider a slightly different approach along the lines of the quiver-theoretic approach for the case $N = S^n$ in \cite{S04}. This approach  avoids spectral sequences and the use of gradings; thus it avoids one significant assumption on $L$, 
namely that it has vanishing Maslov class $\mu_L \in H^1(L)$.
\\
\newline To keep things concrete, take  $N = \bbC P^2$, and a Morse function $f\co N \into \bbR$ with three critical points  $x_0, x_2, x_4$,  with Morse indices 0, 2, 4. Let $(E,\pi)$ be the corresponding Lefschetz fibration from Theorem $A$, which models the complexification of $f$ on $D(T^*N)$. By construction, $\pi$ comes with  an explicit regular fiber $M$ and vanishing spheres $L_0, L_2, L_4 \subset M$. The main 
consequence of Theorem $A$ is that we have an exact Lagrangian embedding $N \subset E$.
Consequently there is an exact Weinstein embedding $D(T^*N) \subset E$. Now, let $L \subset T^*N$ be any closed exact Lagrangian submanifold. 
By rescaling $L \leadsto \epsilon L$ by some small $\epsilon >0$, we get an exact Lagrangian embedding $L \subset E$.
\\
\newline  Now that we know $L \subset E$ we can invoke Seidel's decomposition theorem ($\ast$).
Roughly, it says that we can represent $L$ algebraically (at the level of Floer theory) in terms of the Lefschetz thimbles $\Delta_4, \Delta_2, \Delta_0$ of $\pi$. To make this more explicit we 
need to know how the Lefschetz thimbles interact Floer theoretically. That is, we need to know
the Floer homology groups
\begin{gather}
 HF(\Delta_4,\Delta_2 ), \, HF(\Delta_2,\Delta_0), \, HF(\Delta_4,\Delta_0), \label{Floer}
\end{gather}
and also the triangle product (which is defined by counting holomorphic triangles with boundary on 
$\Delta_4, \Delta_2, \Delta_0$):
\begin{gather}
 HF(\Delta_4,\Delta_2 ) \otimes HF(\Delta_2,\Delta_0) \into HF(\Delta_4,\Delta_0). \label{triangle}
\end{gather}
These are precisely the calculations carried out in \cite{J}, except we actually  consider the vanishing spheres $L_i = \partial \Delta_i \subset M, i=0,2,4$  and do the corresponding equivalent  calculations in the regular fiber $M$. (In general, one does not expect to compute things like
(\ref{Floer}) and (\ref{triangle}) explicitly. It is only because of the very explicit 
and symmetrical nature of $M$ and $L_0, L_2, L_4$ that the calculations in \cite{J} can be carried out.)
\\
\newline The best way to phrase the answer is to think of a category $\mathcal{C}$ with three objects $\Delta_4, \Delta_2, \Delta_0$, where the morphisms and compositions are given by (\ref{Floer}) and (\ref{triangle}). Then Theorem $B$ in \cite{J} says 
that $\mathcal{C}$ is given by the following quiver with relations:
\begin{gather}\label{quiver}
\xymatrixcolsep{5pc}\xymatrix{\Delta_4 \ar@/^0.25pc/[r]^{a_1} \ar@/_0.25pc/[r]_{a_0} \ar@/^1.5pc/[rr]^{c_1} \ar@/_1.5pc/[rr]_{c_0} & \Delta_2 \ar@/^0.25pc/[r]^{b_1} \ar@/_0.25pc/[r]_{b_0} & \Delta_0} \\
b_1a_1 =0, \,  b_0a_0 = c_0, \,  b_0a_1 - b_1a_0 = c_1 \notag
\end{gather}
(More precisely, Theorem $B$ in \cite{J} says that $\mathcal{C}$ is isomorphic to another category, called the flow category, which is defined entirely in terms of  the Morse theory of $(N,f)$; that is where (\ref{quiver}) comes from.) The upshot of  Seidel's decomposition ($\ast$) in this case is that 
$L$ is represented  by a certain quiver representation of (\ref{quiver}):
\begin{gather}\label{rep}
\xymatrixcolsep{5pc}\xymatrix{W_2 \ar@/^0.25pc/[r]^{A_1} \ar@/_0.25pc/[r]_{A_0} \ar@/^1.6pc/[rr]^{C_1} \ar@/_1.5pc/[rr]_{C_0} & W_1 \ar@/^0.25pc/[r]^{B_1} \ar@/_0.25pc/[r]_{B_0} & W_0} \\
B_1A_1 =0, \, B_0A_0 = C_0, \, B_0A_1 - B_1A_0= C_1 \notag
\end{gather}
Here, the quiver representation (\ref{rep}) is just a choice of  vector-spaces  $W_4, W_2, W_0$ at each vertex, and a choice of linear maps $A_0, A_1, B_0, B_1,C_0, C_1$ satisfying the given 
relations. 
To show  $L$ is Floer theoretically equivalent to $N$ in  $T^*N$ is equivalent to showing that 
the representation (\ref{rep}) is necessarily isomorphic to the representation 
$$W_4 = W_2 = W_0 = \bbC, \, A_0= B_0 = C_0 = id, \, A_1= B_1 =C_1 =0.$$ 
(Of course, this is the representation corresponding to $N \subset T^*N$.)
The analogous problem for $N = S^n$ was solved in \cite{S04}. Work on this and related problems is currently in progress. 

\subsection*{Acknowledgements} 
The ideas about matching paths and Donaldson's decomposition have grown out of discussions I had with Denis Auroux a few years ago, while I was in graduate school. I thank him very much for his hospitality and for generously sharing ideas. The ideas about the nearby Lagrangian conjecture grew out of my Ph.D. work with Paul Seidel, and I thank him warmly as well. 

\section{Morse-Bott handle attachments and Lagrangian surgery}\label{MorseBott}
 
To construct the regular fiber $M$, we will use an extension  of Weinstein's handle attachment technique where we attach a Morse-Bott handle rather than a usual handle.
In this section we only explain the main ideas of this construction; for details we refer the reader to \cite{JB}.
\\
\newline  Recall that in \cite{W} Weinstein explains how to start with a Weinstein manifold $W=W^{2n}$
and attach a $k-$handle $D^k \times D^{2n-k}$, $k \leq n$, along an isotropic sphere in  the boundary of $W$
to produce a new Weinstein manifold $W'$.
(Recall that a Weinstein manifold  is an exact symplectic manifold $(W,\omega, \theta)$, $\omega = d\theta$, equipped with a Liouville vectorfield $X$  (i.e. one that satisfies $\omega(X, \cdot) = \theta$)  such that $-X$ points strictly inward along the boundary of $W$; in particular the boundary is of contact type.)
\\
\newline In \cite{JB} we extend this construction to a certain Morse-Bott case, namely where the handle is of the form
$$H = D(T^*(S^k \times D^{n-k})).$$
Here we think of $S^k \times \{0\} \subset H$ as the critical manifold and we think of 
$S^k \times D^{n-k} \subset H$ as the unstable manifold of $S^k \times \{0\}$.
It is not hard to describe how to attach $H$ to $W$ along the boundary in the \emph{smooth} category.
For that one needs two pieces of data: 
\begin{itemize}
 \item a submanifold 
$$S \subset \partial  W, \, S \cong S^k \times S^{n-k-1}$$ 
(where $S$ now plays the role of the attaching sphere), and
\item a bundle-isomorphism 
$$T^*(S^k \times D^{n-k})|_{(S^k \times \partial D^{n-k})} \into N_{\partial W}(S).$$
\end{itemize}
Here, $N_{\partial W}(S)= T(\partial W)|_{S}/T(S)$ is the normal bundle of $S$ in $\partial W$, 
and the bundle isomorphism determines a diffeomorphism (up to isotopy) 
from part of the boundary of $H$
to a neighborhood of $S$ in $\partial W$, 
$$\phi \co  D(T^*(S^k \times D^{n-k}))_{(S^k \times \partial D^{n-k})} \into U,$$
which we use to attach $H$ to $W$ to form $W' = W \cup H$. 
\\
\newline To extend this construction to the Weinstein category, we need only assume that
$S$ is Legendrian in $\partial W$. 
Then, Weinstein's construction can be modified so that one
starts with a Weinstein manifold $W$ and produces a new Weinstein manifold $W' = W \cup H$.
(See \cite{JB} for details.) The main point which is nontrivial is that the boundary of $W'$ is smooth and convex (i.e. transverse to $X$), and in particular of contact type. See figure \ref{figureM_0}
for a schematic picture of $W' = W \cup H$.
\\
\newline 
In the usual Weinstein handle attachment, $S$ is an isotropic sphere and the
normal bundle of $S$ in $\partial W$ can be decomposed as
$$N_{\partial W}(S) \cong \tau_S^{1} \oplus T^*S \oplus TS^{\omega}/TS,$$
where $\tau_S^{1}$ is the trivial real line bundle over $S$, and $TS^{\omega}$ is the symplectic orthogonal complement in $T(\partial W))$. Thus the first two terms necessarily sum to a trivial bundle, and the only  part which is possibly nontrivial is $TS^{\omega}/TS$ (denoted $CSN(S)$ in \cite{W}).
\\
\newline In our case, one has the same splitting 
\begin{gather}
 N_{\partial W}(S) \cong \tau_S^{1} \oplus T^*S \oplus TS^{\omega}/TS, \label{splitting}
\end{gather}
but, since $S$ is Legendrian, we have $TS^{\omega}/TS=0$. 
On the other hand $S \cong S^k \times S^{n-k-1}$ is not a sphere, so 
$$\tau_S^{1} \oplus T^*S \cong \tau_S^{1} \oplus T^*S^k \times T^*S^{n-k-1}$$ is  usually not trivial.
(Here  $T^*S^k \times T^*S^{n-k-1} \into S^k \times S^{n-k-1}$ is just the Cartesian product of the total spaces.) There is, however, a canonical isomorphism 
\begin{gather}
 \tau_S^{1} \oplus T^*S  \cong T^*(S^k\times D^{n-k})|_{S^k\times \partial D^{n-k}}. \label{bundle-iso}
\end{gather}
So, for us, we do not need to choose any framing data; we only need to choose the identification 
$S \cong S^k \times S^{n-k}$. See \S \ref{application} below for how this identification is chosen in some special situations.
\\
\newline
Note that we have only extended the Weinstein construction to a very particular Morse-Bott situation, namely the case where the critical manifold $C$ is a sphere, and the normal bundle of $C$ has a certain form. In general, a Morse-Bott function 
$f\co  X \into \bbR$ can have an arbitrary connected manifold $C$ as critical manifold, and the normal bundle of $C$ in $X$, say $E \into C$, can be arbitrary. In that situation the Morse-Bott handle
would be modeled on the bundle  $D(E_+) \times D(E_-) \into C$, with fiber $D^{k} \times D^{n-k}$, 
where $E \cong E_+ \oplus E_-$ is the splitting of $E$ into positive and negative eigenspaces of the Hessian of $f$ at $C$. It might be interesting to extend the Weinstein construction to the  general Morse-Bott case. 
\subsection{How this construction is applied}\label{application}

When we construct the regular fiber $M$ in various cases (see \S \ref{fiber}) we will repeatedly apply the above handle-attachment construction in the following set up.
We take $$W = D(T^*L),$$ 
i.e. the disk bundle cotangent bundle of some manifold $L=L^n$, with respect to some metric on $T^*L$.
Then we  take an embedded sphere
$$S^{n-k-1} \subset L$$
with a chosen parameterization of  tubular neighborhood of $S^{n-k-1} \subset L$,
\begin{eqnarray}
\phi\co  S^{n-k-1} \times D^{k+1} \into L \label{phi}  \label{framing}
\end{eqnarray}
corresponding to a chosen trivialization of the normal bundle of $S^{n-k-1}$ in $L$.
(This trivialization will be part of the framing data in  a chosen handle decomposition of our manifold $N$ corresponding to the Morse function $f\co  N \into \bbR$; $L$ will correspond to a regular level set of $f$ and $S^{n-k-1}$ will be an attaching sphere; see \S \ref{Mgeneral}.)
Thus the conormal bundle 
$$\nu^*S^{n-k-1} \subset T^*L$$
is trivial; we take $S \subset \partial W$ to be the sphere bundle
$$S = S(\nu^*S^{n-k-1}) \cong S^{n-k-1} \times S^{k}$$
with a corresponding trivialization $S \cong S^{n-k-1} \times S^{k}$ determined by the chosen framing (\ref{phi}). 
Of course $S$ is Legendrian in $\partial W = S(T^*L)$ since $\nu^*S^{n-k-1}$ is Lagrangian in $T^*L$.
The bundle isomorphism 
$$T^*(S^k \times D^{n-k})|_{S^k \times \partial D^{n-k})} \into N_{\partial W}(S)$$
is determined by (\ref{splitting}) and (\ref{bundle-iso}), since $S$ Legendrian 
implies $TS^{\omega}/TS=0$.

\subsection{Lagrangian surgery}\label{surgery}
One special property possessed by the Weinstein manifold 
$$W' = D(T^*N) \cup H$$ 
is the existence of an exact Lagrangian sphere $Z \subset W'$. Namely, $Z$ is the union 
of the disk conormal bundle $D(\nu^*S^{n-k-1})$ and the unstable manifold $S^k \times D^{n-k} \subset H$:
$$Z = D(\nu^*S^{n-k-1}) \cup (S^k \times D^{n-k}) \subset D(T^*N)\cup H.$$
Maybe it is  helpful to identify
$$Z = \{(u_1, \ldots, u_{n+1}) \in \bbR^{n+1} : \Sigma_i u_i^2=1 \};$$
then, one can think of $S^k \times D^{n-k} \subset H$ and  $D(\nu^*S^{n-k-1}) \subset D(T^*L)$ as corresponding to overlapping neighborhoods of the two subspheres
$$K_+= \{(u_1, \ldots, u_{k+1}, 0, \ldots, 0) \in \bbR^{n+1} : \Sigma_i u_i^2=1 \}, \text{ and}$$
$$K_- = \{(0, \ldots, 0,u_{k+2}, \ldots, u_{n+1}) \in \bbR^{n+1} : \Sigma_i u_i^2=1 \}.$$
$Z$ is smooth because the two pieces can be made to overlap smoothly;
it is Lagrangian since each piece is Lagrangian; and it is exact because each piece is exact, and the overlap region is connected. 

A more interesting fact is that $W'$ contains a Lagrangian submanifold 
$$L' \subset W'$$
which is diffeomorphic to the result of doing surgery on $L$ along the framed 
sphere $S^{n-k-1} \subset L$ (where the framing is (\ref{framing})).  This construction is used to define the Lagrangian  vanishing spheres in $M$, see \S \ref{fiber}. It is also used in the construction of $M$ in the general case, see \S \ref{Mgeneral}. 
One can think of $L'$ as the Lagrangian surgery of $L$ and $Z$ along $S^{n-k-1}$. 
(This construction can be generalized to the case of any two Lagrangians meeting 
cleanly along a connected closed manifold $C \subset L_1, L_2$, where 
$C$ has trivial normal bundle in $L_1$ and $L_2$, see \cite{JB}.)
\\
\newline To define $L'$ we start with an exact Weinstein embedding for $Z \subset W'$
$$\phi_Z\co  D_r(T^*S^n) \into W',$$
where  $D_r(T^*S^n)$ is the disk bundle with respect to the round metric of some suitably small radius $r>0$. Let us realize $T^*S^{n}$ as the following exact symplectic submanifold of $\bbR^{2n+2}$:
\begin{gather}
 T^*S^n = \{(u,v) \in \bbR^{n+1} \times \bbR^{n+1} : |u| =1, u\cdot v =0\}. \label{coordinates}
\end{gather}
Then, the subsphere $K_- \subset S^n$ we defined above has an obvious identification of its conormal bundle with $S^{n-k-1} \times \bbR^{k+1}$, because
$$\nu^*K_- = \{ ((0, \ldots, 0,u_{k+2}, \ldots, u_{n+1}), (v_1, \ldots, v_{k+1}, 0 \ldots, 0)) \in \bbR^{n+1} \times \bbR^{n+1}    : \Sigma_i u_i^2 =1  \}.$$
And of course there is a similar identification for $K_+$, 
$$\nu^*K_+ \cong S^k \times \bbR^{n-k}.$$
Now assume that $\phi_Z$ maps $D_r(\nu^*K_-)$ onto a neighborhood of
$S^{n-k-1} \subset L$. In fact we may assume $\phi_Z|_{D_r(\nu^*K_-)}$ agrees with the 
previously chosen framing (\ref{framing}):
\begin{gather}
\phi_Z|_{D_r(\nu^*K_-)} = \phi|_{ S^{n-k-1} \times D_r^{k+1} }: S^{n-k-1} \times D_r^{k+1}  \into L, \label{embeddingvsframing}
\end{gather}
where we are using the canonical identification $D_r(\nu^*K_-) \cong S^{n-k-1} \times D_r^{k+1}$.
(To see why we can assume (\ref{embeddingvsframing}), see the embedding (\ref{plumbingmap}) below; we can take our Weinstein embedding $\phi_Z$ to be the restriction of that. Alternatively, one can invoke Pozniack's local model for cleanly intersecting Lagrangians \cite[Proposition 3.4.1]{Poz}.)

\subsubsection{Construction of $L'$ up to homeomorphism (denoted $\widetilde L'$)} 
To see the rough idea for the construction of $L'$, let us assume for convenience that
$r=1$ for a moment. Now let $\Phi$ denote the time $\pi/2$ geodesic flow on 
$D(T^*S^n)= D_1(T^*S^n)$ (which is Hamiltonian).
The effect of $\Phi$ on $D(\nu^*K_+)$ is to fix vectors of zero length (i.e. points in $K_+$) and map the unit vectors $S(T^*K_+)$ diffeomorphically onto $S(T^*K_-)$, while vectors of intermediate length interpolate between these extremes. (See figure \ref{figsimpleVCdim2} for the case 
when $\dim D(T^*Z) =2$ and $\Phi$ is tweaked slightly to $\widetilde \Phi$.) Up to homeomorphism, $L'$ can be described as follows:
Define
$$T = \Phi(D(\nu^*K_+))$$ 
and set
$$\widetilde L'= (L \setminus \phi_Z( D(\nu^*K_-))) \cup \phi_Z(T).$$ 
Then it is clear that $\widetilde L'$ is homeomorphic to the surgery of $L$ along the framed sphere 
$S^{n-k-1} = \phi_Z(K_-)$, where the framing is given by (\ref{embeddingvsframing}).
\\
\newline The only problem is that $\widetilde L'$ is  not smooth, because 
$T$ is not tangent to $D(\nu^*K_-)$ along $\partial D(\nu^*K_-)$.
To fix this, we just need to tweak $\Phi$ slightly to get a new Hamiltonian diffeomorphism $\widetilde \Phi$ such that $\widetilde T = \widetilde \Phi(D(\nu^*K_+))$ agrees with $D(\nu^* K_-)$ in a neighborhood of $\partial D(\nu^* K_-)$. 
Then $L' = (L \setminus \phi_Z( D(\nu^*K_-))) \cup \phi_Z(\widetilde T)$ will be smooth.
(See figure \ref{figsimpleVCdim2} for a picture of 
the low dimensional situation: $D(T^*Z) \cong D(T^*S^n) = D(T^*S^1)$, $K_-, K_+ \cong S^0$.) 
We spell the details out now since we will need them available later.
\begin{figure}
\begin{center}
\includegraphics[width=3in]{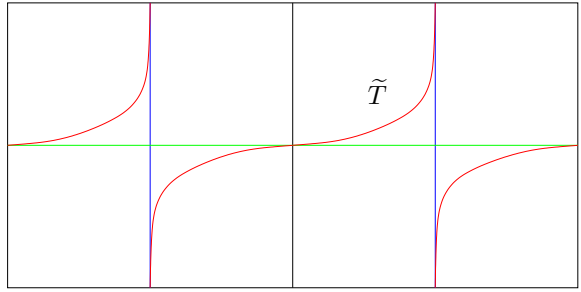} 
\put(-80,70){$\widetilde T$}
\caption{
Consider the low-dimensional situation $D(T^*Z) \cong D_1(T^*S^1)$, $K_+, K_- \cong S^0$.
We have depicted $D_1(T^*S^1)$ as $\bbR /2\pi\bbZ \times [-1,1]$.
The horizontal green line represents $Z \cong \bbR /2\pi\bbZ$; 
the two vertical blue lines represent $D(\nu^*K_-) \cong S^0 \times [-1,1]$; 
and the two curved red lines represent
$\widetilde T =  \widetilde \Phi (D(\nu^*K_+))$.}
 \label{figsimpleVCdim2}
\end{center}
\end{figure}
\subsubsection{Construction of $L'$ as a smooth Lagrangian submanifold} \label{surgeryh}
First, consider the \emph{normalized} geodesic flow on $T^*S^n \setminus S^n$,  which moves each (co)vector at unit speed for time $t$, regardless of its length. This has an explicit formula in terms of the coordinates (\ref{coordinates}): 
\begin{gather}
\sigma_t\co  T^*S^n \setminus S^n \into T^*S^n \setminus S^n, \notag
\sigma_{t}(u,v) = (\cos t u + \sin t \frac{v}{|v|}, \cos t \frac{v}{|v|} - \sin t u). 
\end{gather}

Given any function $H\co   T^*S^3 \setminus S^3 \into \bbR$ we let $\phi^H_t$ denote the time $t$ Hamiltonian flow of  $X_H$  (our convention is $\omega(\cdot, X_H) = dH$). It is elementary to check that 
for any $k \in C^{\infty}(\bbR,\bbR)$,
\begin{gather}
\phi^{k(H)}_t(p) =  \phi^H_{k'(H(p))t} (p) \label{k(H)}.
\end{gather}
Let
$$\mu\co  T^*S^3 \setminus S^3 \into \bbR, \mu(u,v) = |v|.$$
Then it is well-known that $\phi_t^{(1/2)\mu^2}$ is the usual geodesic flow and so (\ref{k(H)}) implies $\phi_t^{\mu}$ is equal to the \emph{normalized} geodesic flow, with the formula given by $\sigma_t$.
Now let  $h\co  \bbR \into \bbR$ be any smooth function satisfying 
\begin{gather}
h'(0) =0,\\ \notag
h'(t) = 1/2 , t \in [r/2,r],\\ \notag
h''(t) > 0, t \in [0,r/2),\\ \label{hconditions}
h(-t) = h(t) - t \text{ for small } |t| \notag
\end{gather}
In \S \ref{particularh} we will make a particular choice for $h$. Consider the map 
$$ F\co  D_r(T^*S^3)\setminus S^3 \into D_r(T^*S^3)\setminus S^3$$
defined by 
$$ F(u,v) = \phi_{\pi/2}^{h(\mu)}(u,v) =   \sigma_{h'(|v|)\pi}(u,v).$$
Then $ F$ extends continuously over the zero-section because $h'(0) =0$. To see that the extension is smooth one applies \cite[Lemma 1.8]{LES}. (This is why we need  $h(-t) = h(t) - t \text{ for small } |t|$.)
Call the extension 
$$\widetilde F \co  D_r(T^*S^3) \into D_r(T^*S^3).$$
(Here, $\widetilde F$  plays the role of $\widetilde \Phi$ before.)
Now define
$$\widetilde T = \widetilde  F(D_r(\nu^*K_+)).$$ 
Notice that 
$$\widetilde  F(D_{[r/2,r]}(\nu^*K_+)) = D_{[r/2,r]}(\nu^*K_-).$$
(One can see this by the formula for  $\sigma_{\pi/2}$.) 
It follows that
\begin{gather}
 \widetilde T \cap D_r(\nu^*K_-) = D_{[r/2,r]}(\nu^*K_-). \label{overlap}
\end{gather}
(This is an equality rather than just containment because $h''(t) > 0, t \in [0,r/2)$.)
Now set
$$L' = (L \setminus \phi_Z( D_{r/2}(\nu^*K_-))) \cup \phi_Z(\widetilde  T).$$
Then $L'$ is smooth because there is an overlap
$$[L \setminus \phi_Z( D_{r/2}(\nu^*K_-))] \cap \phi_Z(\widetilde T) = \phi_Z(D_{(r/2, r]}(\nu^*K_-)),$$
because of (\ref{overlap}). $L'$ is Lagrangian because $\widetilde T$ and $[L \setminus \phi_Z( D_{r/2}(\nu^*K_-))]$
are, and the overlap has nonempty interior in $L$. Since $\widetilde F$ Hamiltonian implies $\widetilde T$ is exact,
it follows that if $L$ is exact and the overlap region between $L$ and $\phi_Z(\widetilde T)$ (namely 
$\phi_Z(D_{[r/2,r]}(\nu^*K_-)) \cong S^{n-k-1} \times D^{k+1}_{[r/2,r]}$, $n\geq 1$, $k \geq 0$) is connected then $L'$ will be exact as well. 

\subsection{Plumbing} \label{plumbing}
There is an alternative construction called \emph{symplectic plumbing}, which (in particular) produces a manifold $W^0$ homeomorphic to $W' = D(T^*N) \cup H$ from the last section. We do not use this construction in this paper (mainly because the boundary of $W^0$ is not smooth), but it gives a useful alternative view point, and it is used in \cite{J}, so we discuss it briefly here. From time to time we may mention it to give some additional clarification in visualizing things.
\\
\newline Take two disk cotangent bundles $D(T^*L_1)$ and $D(T^*L_2)$ and assume that there is a closed manifold $K$ (connected, say) which has embeddings 
$$K \subset L_1, \text{ and } K \subset L_2.$$
Assume moreover that the normal bundle of $K$ in both $L_1$ and $L_2$ is trivial and choose
tubular neighborhoods
$$K \times D^{n-k} \subset L_1, \text{ and } K\times D^{n-k} \subset L_2,$$
where $\dim  L_i = n, \,  \dim  K =k$.
\\
\newline The idea of the symplectic plumbing construction is to 
glue  $D(T^*L_1)$ and $D(T^*L_3)$ together along neighborhoods of
$K \subset D(T^*L_1)$ and $K \subset D(T^*L_2)$, so that the intersection of $L_1$ and $L_2$ is precisely  $K$, and the intersection is \emph{clean}, or \emph{Morse-Bott}, i.e. $T(L_1) \cap T(L_2) = T(K)$. 
\\
\newline More precisely, the submanifolds $K \times D^{n-k} \subset L_1, L_2$ have tubular neighborhoods 
$$W_1 \subset D(T^*L_1), \, W_2 \subset D(T^*L_2),$$
with exact symplectomorphisms 
$$W_1, W_2 \cong D(T^*(K \times D^{n-k})).$$
(Here, we assume that $S(T^*(K \times D^{n-k})) \subset S(T^*L_i),$ $i=1,2$.)
To define the plumbing 
$$W^0 = D(T^*L_1) \boxplus D(T^*L_2)$$ 
we take the quotient of the disjoint union $D(T^*L_1) \sqcup D(T^*L_2)$, where we identify $W_1$ and  $W_2$ using a suitable exact symplectomorphism 
$$\eta\co  D(T^*(K \times D^{n-k}))\into D(T^*(K \times D^{n-k}))$$ 
which sends $K \times D^{n-k}$ to $D(\nu^*K)$ and  $D(\nu^*K)$ to $K \times D^{n-k}$.
This means that in $W^0$ a tubular neighborhood of $K$ in $L_1$ is identified with  
the disk conormal bundle of $K$ in $D(T^*L_2)$, and vice-versa. (This condition is motivated 
by Pozniack's local model \cite[Proposition 3.4.1]{Poz}.)
\begin{figure}
\begin{center}
\includegraphics[width=3in]{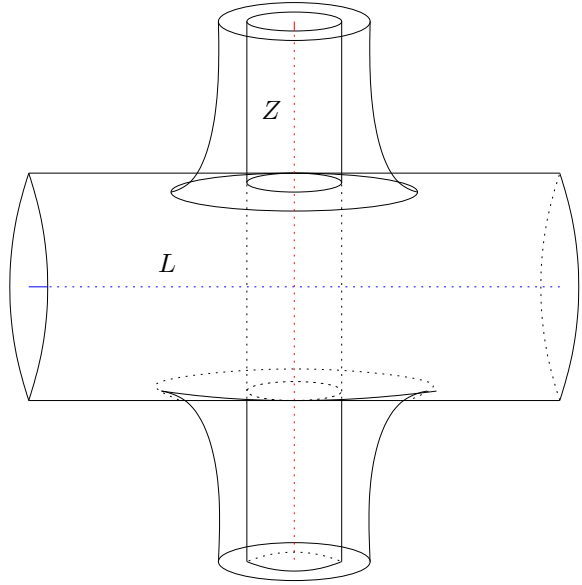} 
\put(-121,175){$Z$}
\put(-160,117){$L$}
\caption{Schematic of $W^0 = D(T^*L) \boxplus D(T^*S^n)$ embedded into $W'= D(T^*L) \cup H$  near plumbing or handle-attachment region. Parts of $L$ and the Lagrangian sphere $Z$ 
are also labeled.}
\label{figureM_0}
\end{center}
\end{figure}
\\
\newline To define $\eta$, let us pass for a moment to the noncompact model 
$$T^*(K \times \bbR^{n-k}) \cong T^*K \times T^*\bbR^{n-k} \cong T^*K \times \bbC^{n-k}.$$
It is easy to see that $\nu^*K \subset T^*(K \times \bbR^{n-k})$ corresponds to 
$K \times i\, \bbR^{n-k} \subset T^*K \times \bbC^{n-k}$ in this model.
Thus, we can $\eta$ to be the restriction of the map 
$$id_{T^*K} \times m(i)\co  T^*K \times \bbC^{n-k} \into T^*K \times \bbC^{n-k}.$$
There is one sticky point, which is that $W_1$ and $W_2$
correspond to subsets of 
$T^*K \times \bbC^{n-k}$ with boundary (with corners), and so one has to be a little bit careful to choose the disk bundles $D(T^*L_1)$ and $D(T^*L_2)$ so that these boundaries correspond nicely under the map $id_{T^*K} \times m(i)$. (See \cite{JB} for details.)
\\
\newline To relate this to the handle attachment $W' = D(T^*L) \cup H$ we take 
$L_1 =L$ to be any manifold and $K = S^{n-k-1} \subset L$ with the chosen framing
$S^{n-k-1} \times D^{k+1} \subset L$ as in \S \ref{application}. Then we we take 
$$L_2 = S^n=\{(u_1, \ldots, u_{n+1}) \in \bbR^{n+1} : \Sigma u_i^2 =1  \},$$ 
and we take $S^{n-k-1} =  K_- \subset L_2$ as in \S \ref{surgery}
with the the obvious canonical framing $S^{n-k-1} \times D^{k+1} \subset L_2$.
Then, 
$$W^0 = D(T^*L) \boxplus D(T^*S^n)$$
is homeomorphic to 
$$W' = D(T^*L) \cup H.$$
Moreover, there is an exact symplectic embedding
\begin{gather}
\rho\co  D(T^*L) \boxplus D(T^*S^n)\into D(T^*L) \cup H \label{plumbingmap}
\end{gather}
such that $\rho|_{L} = id_{L}$ and $\rho(S^n) = Z$ (see figure \ref{figureM_0}). See \cite{JB} for the proof. 

\section{Construction of the regular fiber $M$ and the Lagrangian vanishing spheres} \label{fiber}
Let $N$ be a closed manifold and let $f\co  N \into \bbR$ be self-indexing Morse function.
In this section we will explain how to construct the regular fiber $M$ and the Lagrangian vanishing spheres $L_1, \ldots, L_m \subset M$ for $\pi\co  E \into D^2$. 
\\
\newline We will deal with three cases:
\begin{enumerate}
\item $f$ has three distinct Morse indices $0,n,2n$ (see \S \ref{4mfds} and \ref{2nfiber}). \label{three}
\item $f$ has four distinct Morse indices $0,n,n+1, 2n+1$ (see \S \ref{3mfds}). \label{four}
\item The general case (partial sketch- see \S \ref{Mgeneral}). \label{general}

\end{enumerate}

The construction of $M$ in each case is identical as the dimension of $N$ varies.
For this reason we will keep things slightly more concrete in the first two cases above by focusing 
on the cases when $\dim  N =4$ and $\dim  N =3$ respectively. See \S \ref{2nfiber} for how things work in an arbitrary dimension in case (\ref{three}).

\subsection{Constructing $M$ and the vanishing spheres in case (\ref{three}), $\dim N =4$} \label{4mfds}


Suppose $N$ is a closed 4-manifold and 
$$f\co  N \into \bbR$$ 
has critical points  $x_0, x_2^{\, j}, x_4$, $j=1, \ldots, k$, where the subscript indicates the Morse index. Let $g$ be a Riemannian metric such that $(f,g)$ is Morse-Smale.
\\
\newline First,  $(f,g)$ induces a handle decomposition of $N$, which determines $k$ framed knots $K_j \subset S^3$, which are the attaching spheres of the 2-handles, together with a parameterization of a tubular neighborhood of each $K_j$ 
\begin{gather}
 \phi_j\co  S^1 \times D^2 \into S^3, \label{framingj}
\end{gather}
determined by the framing for the 2-handle up to isotopy. 
Set 
$$L_0 = S^3.$$
We start with the disk bundle $D(T^*L_0)$.
To construct $M$, we attach a Morse-Bott handle
$$H_j = D(T^*(S^1 \times D^2))$$
to $D(T^*L_0)$ for each $j$, where the gluing region is a neighborhood
of $S(\nu^*K_j)$ in $S(T^*L_0)$, and the framing $\phi_j$ determines the gluing map. 
This produces a Weinstein manifold
$$M = D(T^*L_0) \cup (\cup_j H_j),$$
as we explained in \S \ref{application}. We have for each $j$
an exact Lagrangian 3-sphere 
$$L_2^j \subset M$$
which is the union of $D(\nu^*K_j)$ and $S^1 \times D^2 \subset H_j$. 
(Each $L_2^{\, j}$ corresponds to what we called $Z$ in \S \ref{surgery}.) See figure \ref{figureM_0} in \S \ref{plumbing}  for a schematic picture of the region near each attaching region.
\\
\newline Now define $L_4$ as the Lagrangian surgery of $L_0$ and all the $L_2^j$'s. In \S \ref{surgery}
we explain how to define the Lagrangian surgery of $L \subset D(T^*L)\cup H$ and a single Lagrangian sphere $Z \subset D(T^*L) \cup H$. We use that definition for all $L_2^j$'s simultaneously, where each $L_2^j$ plays the role of $Z$.
$L_4$ is exact since it is simply-connected.  Thus we have defined exact Lagrangian spheres
$L_0, L_2^{\, j}, L_4$ in $M$, one for each critical point  $x_0, x_2^{\, j}, x_4$ of $f$.
 If $\dim N =2$ then there is an analogous construction of a 2 dimensional version of $M$; see figure \ref{Fiberofpi0} in \S \ref{pi4} for the case when $f$ has 
four critical points with Morse indices 0,1,1,2.
\\
\newline There is one ingredient in the surgery construction which is useful to record here. Namely,
we must fix exact Weinstein embeddings for each $L_2^j \subset M$,
$$\phi_{L_2^{\, j}}\co  D_r(T^*S^{3}) \into M,$$
where  $D_r(T^*S^{3})$ is the disk bundle with respect to the round metric of radius $r>0$.
$\phi_{L_2^{\, j}}$ should also agree with the framing (\ref{framingj}) along $D_r(\nu^*K_-)$, that is, we assume 
$$\phi_{L_2^{\, j}}|_{D_r(\nu^*K_-)}\co  D_r(\nu^*K_-) \into L_0$$
coincides with 
$$\phi_j|_{S^1\times D^2_r}\co  S^1\times D^2_r  \into S^3.$$ 

\begin{remark} \label{exactnessdim2}In the case $\dim N=2$ one can show that the analogue of $L_4$ (which would be $L_2$ corresponding to a critical point of index 2) is exact by applying lemma \ref{halftwist}. That lemma says $L_4 =\tau(L_4')$ where $L_4'$ is exact and $\tau$ is exact. The analogous lemma in the case $\dim N =2$ says $L_2 =\tau(L_2')$.
\end{remark}
 
\subsection{Constructing $M$ and the vanishing spheres in case (\ref{three}), $\dim N =2n$} \label{2nfiber}

In this section we  quickly sketch how the construction  works in the more general case where $\dim N =2n$ and the Morse indices of $f$ are $0,n,2n$; it is much the same as \S \ref{4mfds}.
\\
\newline In this case the handle-decomposition of $N$ corresponding to $(f,g)$ determines 
$k$ attaching spheres 
$$K_j \subset S^{2n-1}, \, K_j \cong S^{n-1}$$
with framings
$$\phi_j \co  S^{n-1} \times D^{n} \into S^{2n-1}.$$
To construct $M$, we set $L_0 = S^{n-1}$
and then attach $k$ Morse-Bott handles 
$$H_j \cong D(T^*(S^{n-1} \times D^n))$$ 
to $D(T^*L_0)$ where the attaching region is a neighborhood of $S(\nu^*K_j) \subset S(T^*L_0)$,
and the attaching maps are determined by framings $\phi_j$. The other vanishing cycles $L_n^j, L_{2n}$ are defined as before.

\subsection{Constructing $M$ and the vanishing spheres in case (\ref{four}), \dim N = 3} \label{3mfds}

In this section we explain how, for a closed 3-manifold $N$, one can construct
the regular fiber $M$ and vanishing spheres $L_0, L_1,L_2,L_3$ in $M$. 
This discussion applies equally well to self-indexing Morse functions
$f\co  N \into \bbR$ with four critical values $0,n,n+1,2n+1$. 
See section \ref{2nfiber} to see how things are much the same from one dimension to the next.
\\
\newline Let $(N,f,g)$ be a triple consisting of a closed 3-manifold $N$, and a self-indexing Morse function $f\co  N \into \bbR$, together with a Morse-Smale metric $g$ on $N$.
Then $(N,f,g)$ determines a Heegard diagram for $N$ as follows.
Let 
$$T = f^{-1}(3/2).$$ 
This  is a closed 2-manifold of some genus $h$. 
We may assume $f$ has $h$ critical points of index 
$1$ and $2$, $x_1^{\, j}$, $x_2^{\, j}$, $j=1,\ldots, h$.
Then there are $h$ circles
$$\alpha_j, \beta_j \subset T, j=1, \ldots g,$$
namely 
$$\alpha_j = S(x_1^{\, j}) \cap T, \beta_j = U(x_2^{\, j}) \cap T,$$
where $S(x_1^{\, j})$ is the stable manifold of $x_1^{\, j}$ and $U(x_1^{\, j})$ is the unstable manifold of $x_1^{\, j}$.
The data of $T$ together with the circles $\alpha_j,\beta_j$ is called a Heegard diagram for $N$; it determines the diffeomorphism type of $N$. 
In addition, $(f,g)$ determine framings 
$$\phi_j^\alpha \co  S^1 \times D^1 \into T$$
$$\phi_j^\beta\co  S^1 \times D^1 \into T$$
for $\alpha_j$, $\beta_j$ respectively. (Of course, in this dimension, there are only two possible framings for each $\alpha_j$ or $\beta_j$ and they give rise to diffeomorphic manifolds. But in higher dimensions the analogue of these framings are important.)
\\
\newline Then $M$ is defined as follows.
Set 
$$L_1^{\, j} , L_2^{\, j} = S^2,j = 1, \ldots, h.$$ 
Now consider the disk bundle $D(T^*T)$ and 
consider the disk conormal bundles  $D(\nu^*(\alpha_j))$ and $D(\nu^*(\beta_j))$,  each being diffeomorphic to $S^1 \times D^1$. Since $(f,g)$ is Morse-Smale, it follows that $\alpha_j$ and $\beta_k$ are transverse for any $j,k$.  Therefore we may assume that 
the boundaries of the disk conormal bundles $S(\nu^*(\alpha_j))$ and $S(\nu^*(\beta_k))$
are disjoint for every $j,k$. This means we can attach handles to $D(T^*T)$ along $S(\nu^*(\alpha_j))$ and $S_R (\nu^*(\beta_k))$ as follows. Take $2h$ Morse-Bott handles 
$$H_j^\alpha = D(T^*(S^0 \times D^2))$$ 
$$H_j^\beta = D(T^*(S^0 \times D^2)).$$ 

\begin{figure}
\begin{center}
\includegraphics[width=2.5in]{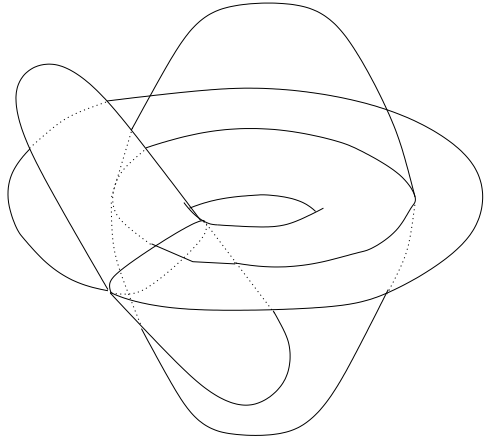} 
\caption{The case $N= S^3$, where $f$ has four critical points of index 0,1,2,3. This is a schematic of $M$, 
depicting $T=T^2$, with one $\alpha$ curve and one $\beta$ curve, together with two vanishing spheres, $L_1$  and $L_2$ which meet $T$ at $\alpha$ and $\beta$. The other two vanishing spheres $L_0$ and $L_2$ are not depicted; they are obtained as the surgery of $T$ and $L_1$, respectively $T$ and $L_2$. (The caption of figure \ref{VC3mfd} attempts to describe how to visualize $L_0$ and $L_3$.)}
\label{MVCfor3mfd} 
\end{center}
\end{figure}
\begin{figure}
\begin{center}
\includegraphics[width=2in]{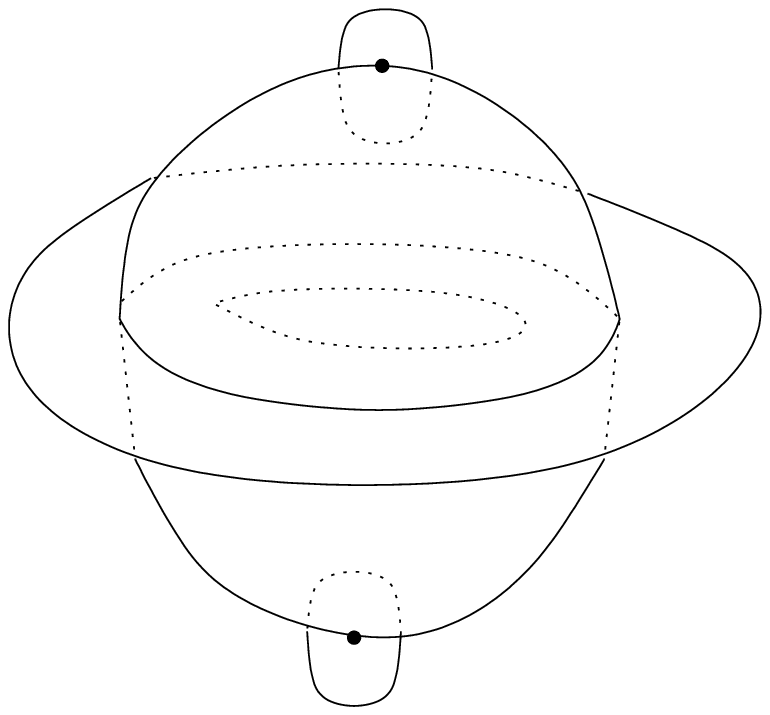} 
\caption{A two dimensional vanishing sphere (relevant for $\dim N =3$). Depicted schematically are the (disk) conormal bundles of $K_+ = S^0$ and $K_- = S^1$. To visualize the surgery of $T$ and $L_1$, say, first imagine bending the conormal bundle of $K_+$ so that its two boundary circles are identified with the two boundary circles of the conormal bundle of $K_-$. (Formally speaking, this ``bending''  is done by reparameterized geodesic flow.) Second, imagine that the conormal bundle of $K_-$ is identified with a neighborhood of the $\alpha$ curve in $T$, say $N(\alpha)$ (see figure \ref{MVCfor3mfd}). Thus, the union of the bent conormal bundle of $K_+$ and $T \setminus N(\alpha)$ together form a 2-sphere which is $L_0$ .}
\label{VC3mfd}
\end{center}
\end{figure}

(Note that, when $\dim N =3$ as in our case, each $D(T^*(S^0 \times D^2))$  is just the disjoint union of two usual (not Morse-Bott) $2-$handles $D(T^*D^2)$.)
To construct $M$ we attach each $H_j^\alpha$ and $H_j^\beta$ to the boundary of $D(T^*T)$
in such a way that the core of $H_j^\alpha$, that is $S^0 \times D^2$, is glued to 
$D(\nu^*(\alpha_j))$ along their boundaries, and similarly for $H_j^\beta$ and 
$D(\nu^*(\beta_j))$. Thus the union of the core of $H_j^\alpha$, given by $S^0 \times D^2$, and $D (\nu^*\alpha_j))$ forms an exact Lagrangian 2-sphere
$$L_1^{\, j} \subset M$$
which intersects $T$ in $\alpha_j$, and similarly we have
$$L_2^{\, j} \subset M$$
which intersects $T$ in $\beta_j$. Here, $L_1^{\, j}, L_2^{\, j}$ are analogous to $Z$ in \S \ref{surgery}. (If one is not concerned about $M$ having a smooth boundary, one can alternatively define $M$ as the \emph{plumbing} (see \S \ref{plumbing})  of $D_R(T^*T)$ and $D_r(T^*L_1^{\, j})$ along $\alpha_j$ and $D_r(T^*L_2^{\, j})$ along $\beta_j$, for some $0< r< R$.)
The precise attaching maps for $H_j^\alpha$ and $H_j^\beta$ are determined by the 
framings $\phi_j^\alpha$ and $\phi_j^\beta$. 
Let 
$$K_- = \{(u_1, u_2, 0) : \Sigma_j u_j^2 =1 \} \subset S^2$$
$$K_+ = \{(0,0,u_3) : u_3 = \pm1\} \subset S^2.$$
There are exact Weinstein embeddings of $L_2^{\, j}$ and $L_1^{\, j}$,
$$\phi_{L_2^{\, j}}\co  D_r(T^*S^3) \into M$$
$$\phi_{L_1^{\, j}}\co  D_r(T^*S^3) \into M,$$
such that 
$$\phi_{L_1^{\, j}}(K_-) = \alpha_j$$
$$\phi_{L_2^{\, j}}(K_-) = \beta_j$$
and 
$$\phi_1^{\, j}|_{D_r(\nu^*K_-)} = \phi_j^\alpha$$
$$\phi_2^{\, j}|_{D_r(\nu^*K_-)} = \phi_j^\beta.$$
Here $T^*L_1^{\, j}$, $T^*L_2^{\, j}$ are equipped with the standard round metric of $T^*S^2$.
We define $L_0$ as the Lagrangian surgery of 
$T$ and the $L_1^{\, j}$'s, as in \S \ref{surgery}. More precisely, there are two parts to $L_0$: one is
$T \setminus (\cup_j \phi_j^\alpha(S^1 \times D_{r/2}^1))$, and the other is 
the union of some subsets of 
$\phi_{L_1^{\, j}}(D_r(T^*S^2))$ which are defined by taking $D_r(\nu^*K_+) \cong S^0 \times D^2$ and applying a reparameterized geodesic flow map to it. 
$L_3$ is defined similarly: it is the surgery of $T$ and the $L_2^{\, j}$'s.
See figure \ref{MVCfor3mfd} for a schematic of $M$, and figure \ref{VC3mfd} for a schematic of a vanishing sphere to help with visualizing surgery. 

\subsection{Partial sketch of the construction of $M$ for arbitrary Morse functions.}\label{Mgeneral}

Let $N$ be a closed manifold of dimension $n$. Assume for simplicity of notation that $f\co  N \into \bbR$ is self-indexing with just one critical point of each index  $i_0,i_1, \ldots, i_k$.
As a starting motivation, we expect to have one Lagrangian sphere $L_j$ in $M$ for each critical point $x_{i_j}$ of $f$. Secondly, in $M$ we expect to find that each regular level set of $f$ in $N$ is embedded as a Lagrangian submanifold in $M$. This is because in the total space of the Lefschetz fibration $f_\bbC\co  D(T^*L_0) \into \bbC$ the real level set $f^{-1}(i_j \pm \epsilon)$ is obviously Lagrangian in the complex level set $f_\bbC^{-1}(i_j\pm \epsilon)$; but then we can transport all of these Lagrangians into one common fixed regular fiber of $f_\bbC$.
\\
\newline Here then is the rough idea for the construction of $M$. Take $D(T^*L_0)$, \ldots, $D(T^*L_k)$, one for each critical point of $f\co  N \into \bbR$. We take a handle-decomposition of $N$ corresponding to $f$ (and some fixed metric $g$ such that $(f,g)$ is Morse-Smale). Let the regular level sets of $f$ be denoted 
$$N_j = f^{-1}(i_j+\epsilon),\, j =0, \ldots, k.$$
And denote the corresponding sublevel sets by 
$$N_j^{\leq} = \{x \in N: f(x) \leq f(i_j + \epsilon)\}.$$
We will construct a sequence of manifolds $M_0, \ldots, M_{k-1}$ where $M_j$ is in fact a model for
the regular fiber of the complexification of $f$ restricted to the $j$th sublevel set, $f|_{N_j^{\leq}}: N_j^{\leq} \into \bbR$. 
In particular, $M= M_{k-1} = M_k$ is a model for the regular fiber of the complexification of $f$ on $N =N_k$. (It happens that $M_{k-1}$ is the regular fiber for the complexification of both $f|_{N_{k-1}^{\leq}}$ and $f|_{N_{k}^{\leq}}$ = $f_{N}$.) 
\\
\newline Since $N$ is a closed manifold we know that $N_0$ and $N_{k-1}$ are both spheres $S^{n-1}$, 
and in fact we identify $L_0 = N_0$ and $L_{k} = N_{k-1}$.
Let $M_0 = D(T^*L_0)$. Let $S^{i_1-1} \subset N_0=L_0$ be the framed attaching sphere of the first handle (of index $i_1$). Let $D(\nu^*S^{i_1-1}) \subset D(T^*L_0)$ denote the conormal bundle of $S^{i_1-1}$. To get $M_1$, attach a Morse-Bott handle to $M_0$ along $\partial D(\nu^*S^{i_1-1})$. (The result is homeomorphic to the plumbing of $D(T^*L_0)$ and  $D(T^*L_1)$.)
Now, as in \S \ref{surgery}, we have inside $M_1$  the Lagrangian surgery of $L_0$ and $L_1$. Since this is diffeomorphic to the result of doing surgery on $L_0$ along the framed sphere $S_{i_1-1}$, we identify it with the next level set $N_1$. Now the next framed attaching sphere $S^{i_2-1}$ is a subset of $N_1$. Take an exact Weinstein embedding for $N_1$, say 
$$D(T^*N_1) \subset M_1,$$ 
and look at $D(\nu^*S^{i_2-1}) \subset D(T^*N_1)$. The key technical obstruction to proceding at this point is the following: We must check that the boundary of $D(\nu^*S^{i_2-1})$ reaches the boundary of the ambient space $M_1$:
$$\partial D(\nu^*S^{i_2-1}) \subset \partial M_1.$$ 
If this is satisfied then we can attach a Morse-Bott handle to $M_1$ along 
$\partial D(\nu^*S^{i_2-1})$ to get $M_2$. The union of the core of this handle attachment and $D(\nu^*S^{i_2-1})$ will form $L_2$. (Alternatively, up to homeomorphism, one can think 
of $M_2$ as defined by plumbing $D(T^*L_2)$ onto the subset $D(T^*N_1) \subset M_1$.)
Then, as in \S \ref{surgery},   we have inside $M_2$ the Lagrangian surgery of $N_1$ and $L_2$, which is the next level set $N_2$, etc $\ldots$ We continue in this way, where at each stage to get $M_{j+1}$ from $M_{j}$ we do a Morse-Bott handle attachment. (Or equivalently, up to  homeomorphism, we plumb on a copy of $D(T^*L_{j+1})$ onto the Weinstein neighborhood $D(T^*N_j) \subset M_j$). At the last stage $N_{i_k-1}$ is equal to the last vanishing sphere $L_k$, and we stop.
\\
\newline Each time we do the Morse-Bott handle attachment to go from $M_j$ to $M_{j+1}$ 
we must first check that the conormal bundle $D(\nu^*S^{i_j-1})$ in the Weinstein neighborhood $D(T^*N_j)\subset M_j$ satisfies $$\partial D(\nu^*S^{i_j-1}) \subset \partial M_j.$$
But checking this condition turns out to be not completely straight-forward because of how the Lagrangians are all twisted up. The full treatment of this general case is therefore postponed to a future paper.

\section{Basics of symplectic Lefschetz fibrations} \label{basics}

In this section we review without proof some basic facts and constructions in the theory of  symplectic Lefschetz fibrations. The material is mostly taken from \cite[Ch.1]{LES} and \cite[\S 15,16]{S08}; the reader can find details there.

\subsection{The local model $q\co   \bbC^{n+1} \into \bbC.$} \label{localmodel}
The basic local model for Lefschetz fibrations is
$$q\co   \bbC^{n+1} \into \bbC, \, q(z) = \Sigma_j z_j^2.$$
Realize $T^*S^n$ as 
$$\{(u,v) \in \bbR^{n+1} \times  \bbR^{n+1} \co  |u| =1, u\cdot v=0\}.$$ 
For each $s>0$, there is a canonical exact symplectic isomorphism
$$\rho_s\co  q^{-1}(s) \into T^*S^n, \, \rho_s(z) = (x|x|^{-1}, -y|y|).$$
That is, $\rho_s$ satisfies 
$$\rho_s^*(\Sigma_j -u_jdv_j) = \Sigma x_j dy_j.$$
Define
$$\rho_0\co  q^{-1}(0) \setminus \{0\} \into T^*S^n \setminus S^n$$
by the same formula 
$$\rho_0(z) = (x|x|^{-1}, -y|y|).$$
Then this also gives an exact symplectomorphism.
\\
\newline Recall we have \begin{gather}
\sigma_t\co  T^*S^n \setminus S^n \into T^*S^n \setminus S^n, \notag
\sigma_{t}(u,v) = (\cos t u + \sin t \frac{v}{|v|}, \cos t \frac{v}{|v|} - \sin t u). \label{sigma}
\end{gather}
This  is the \emph{normalized} geodesic flow on $T^*S^n \setminus S^n$,  
which moves each (co)vector at unit speed for time $t$, regardless of its length. 
For any $w \in D^2$, let 
$$\Sigma_w = \sqrt{w}S^n \subset \bbC^{n+1}, \Sigma = \cup_w \Sigma_w.$$
There is a fiber preserving diffeomorphism, which is a fiber-wise exact symplectomorphism  
$$\Phi\co  \bbC^{n+1} \setminus \Sigma \into (T^*S^n \setminus S^n)\times \bbC,$$ 
$$\Phi(z) = (\sigma_\theta (\rho_s(e^{-i\theta}z)), q(z)),$$
where $q(z) = se^{i\theta}.$
Let
$$k\co  \bbC^{n+1} \into [0,\infty), k(z) = |z|^4 -|q(z)|^2.$$
Then $\Sigma = \{k=0\}$ and for any $r>0$ and any $t \geq r$ we have
$$\Phi^{-1}(S_{r}(T^*S^3)) = k^{-1}(4r^2) \subset \bbC^{n+1},\text{ and }\Phi^{-1}(D_{[t, r]}(T^*S^3)) = k^{-1}([4t^2,4r^2]).$$
Here,  
$$S_r(T^*S^3) = \{(u,v) \in T^*S^3 : |v| = r\}, \, 
D_{[t, r]}(T^*S^3) = \{(u,v) \in T^*S^3 : |v| \in [t, r]\}.$$

$q$ has well-defined symplectic parallel transport maps between any two regular fibers, where the
connection is given by the symplectic orthogonal to the fibers, 
$$T_z(q^{-1}(w))^{\omega} = \overline{z} \,\bbC.$$ 
(Indeed, parallel transport preserves the level sets of $k$, so it is well-defined.)
For any  embedded path 
$$\gamma\co  [0,1] \into D^2 \text{ with } \gamma(0) = w \neq 0, \gamma(0) =0$$ 
the corresponding \emph{Lefschetz thimble} (or \emph{vanishing disk}) is 
$$\Delta_\gamma = \cup_t \Sigma_{\gamma(t)}$$
and the \emph{vanishing sphere} in $q^{-1}(w)$ is 
$$V_\gamma = \Sigma_{w} \subset q^{-1}(w).$$

$\Phi$ can be described more conceptually as follows. 
\begin{lemma} \label{radial} Consider the map 
$$\widetilde \Phi\co  \bbC^{n+1} \setminus \Sigma \into q^{-1}(0) \setminus \Sigma_0$$
given by the radial symplectic parallel transport maps from $q^{-1}(w) \setminus \Sigma_w$ to $q^{-1}(0) \setminus \Sigma_0$. 
Then $\Phi = \rho_0 \circ \widetilde \Phi.\,  \square$
\end{lemma}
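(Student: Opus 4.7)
My plan is to verify the lemma by (a) establishing well-definedness of $\widetilde\Phi$, (b) factoring the radial transport into an angular arc plus a segment of the positive real axis using path-invariance of parallel transport over $\bbC^*$, and (c) reducing each piece to an explicit computation via the $SO(n+1,\bbR)$-symmetry of the local model.

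For well-definedness of $\widetilde\Phi$: along the radial path $w(t)=(1-t)q(z)$ the horizontal lift satisfies $\dot z(t)=-q(z)\,\overline{z(t)}/(2|z(t)|^2)$, using the formula $T_z(q^{-1}(w))^\omega=\overline z\,\bbC$ for the symplectic orthogonal. Since $k(z)=|z|^4-|q(z)|^2$ is conserved by symplectic parallel transport and $z\notin\Sigma$ means $k(z)>0$, the estimate $|z(t)|^2\geq\sqrt{k(z)}$ is uniform in $t$, so the ODE extends smoothly across $t=1$ and lands in $q^{-1}(0)\setminus\Sigma_0$. Composing with the smooth $\rho_0$ then gives a smooth map into $T^*S^n\setminus S^n$, and fiberwise exactness follows since both symplectic parallel transport and $\rho_0$ are exact.

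To compare with $\Phi$: the regular part of $q$ is a locally trivial symplectic fibration over $\bbC^*$, so parallel transport depends only on the homotopy class of a path in $\bbC^*$. For $z\in q^{-1}(se^{i\theta})\setminus\Sigma$ I may replace the straight radial path by the concatenation of the short arc $se^{i(1-t)\theta}$ with the real segment $[s,0]$. Writing $T_\theta\co q^{-1}(se^{i\theta})\into q^{-1}(s)$ for the angular parallel transport along the arc, the lemma then reduces to two identities: $\rho_0\circ\widetilde\Phi|_{q^{-1}(s)}=\rho_s$ on the real slice, and $\rho_s\circ T_\theta(z)=\sigma_\theta(\rho_s(e^{-i\theta}z))$ on $q^{-1}(se^{i\theta})$, where the formula defining $\rho_s$ on the right is interpreted via its evident extension off the fiber $q^{-1}(s)$.

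For the first identity I invoke the $SO(n+1,\bbR)$-equivariance of radial parallel transport, $\rho_s$, and $\rho_0$ (the action is holomorphic, unitary, and preserves $q$, hence preserves both the symplectic form and the horizontal distribution). This reduces to $z=(a,ib,0,\ldots,0)$ with $a,b>0$ and $a^2-b^2=s$, where the ODE becomes $\dot a=-sa/(2(a^2+b^2))$, $\dot b=sb/(2(a^2+b^2))$. Hence $ab$ is conserved, giving $z(1)=\sqrt{ab}\,(1,i,0,\ldots,0)$, and direct substitution shows that $\rho_0(z(1))$ and $\rho_s(z)$ both equal $\bigl((1,0,\ldots,0),(0,-ab,0,\ldots,0)\bigr)$. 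The second identity is a computation of the angular half-monodromy of the local model $q$, essentially the content of \cite[Lemma 1.10]{LES}: the holomorphic $S^1$-action $z\mapsto e^{-i\theta}z$ covers the base rotation $w\mapsto e^{-2i\theta}w$ and is unitary, hence fiberwise symplectic, so it differs from $T_\theta$ under $\rho_s$ precisely by the normalized geodesic flow $\sigma_\theta$. The main anticipated obstacle is the bookkeeping in this last step, in particular reconciling the scaling conventions in $\rho_s$, $\sigma_t$, and the formula defining $\Phi$.
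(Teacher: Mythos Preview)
Your step (b) rests on the claim that symplectic parallel transport for $q$ over $\bbC^*$ depends only on the homotopy class of the path. This is false: the connection is not flat. The horizontal space at a regular point $z$ is $\overline z\,\bbC$, and $\omega_{\bbC^{n+1}}(\overline z, i\overline z) = |z|^2 \neq 0$, so the curvature is nowhere zero. Concretely, set $\theta = 2\pi$: your arc becomes a full loop and your second identity reads $\rho_s \circ T_{2\pi} = \sigma_{2\pi}\circ\rho_s = \rho_s$, forcing the monodromy (a generalized Dehn twist) to act trivially on $q^{-1}(s)\setminus\Sigma_s$, which contradicts (\ref{transport}). Your proposed justification of the second identity via the $S^1$-action does not escape this: the unitary rotation $z\mapsto e^{-i\theta}z$ and the arc transport $T_\theta$ do \emph{not} differ under $\rho_s$ by a bare $\sigma_\theta$ --- the actual discrepancy is $\sigma_{\theta\widetilde R_s'(|v|)}$ --- and in fact, granting your correct first identity, the second identity is equivalent to the very path-independence you assumed at the outset.

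The repair, which is what the paper's hint (``rotational symmetry of the radial transport map'') intends, is to apply the $S^1$-symmetry directly to the \emph{radial} path rather than decomposing it. Multiplication by $e^{i\theta}$ on $\bbC^{n+1}$ is a symplectomorphism covering $w\mapsto e^{2i\theta}w$ and carries radial rays to radial rays, hence
\[
\widetilde\Phi|_{q^{-1}(se^{2i\theta})}(z) \;=\; e^{i\theta}\,\widetilde\Phi|_{q^{-1}(s)}\bigl(e^{-i\theta}z\bigr).
\]
On $q^{-1}(0)\setminus\{0\}$ one checks directly (using $|x|=|y|$ and $x\perp y$ there) that $\rho_0(e^{i\theta}w)=\sigma_\theta(\rho_0(w))$. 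Combining this with your first identity $\rho_0\circ\widetilde\Phi|_{q^{-1}(s)}=\rho_s$ (your $SO(n+1)$ reduction is fine) gives $\rho_0\circ\widetilde\Phi(z)=\sigma_\theta\bigl(\rho_s(e^{-i\theta}z)\bigr)=\Phi(z)$ immediately, with no arc transport and no appeal to flatness.
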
 
(We omit the proof. One can check this using the rotational symmetry of the radial transport map as explained in the proof of lemma 1.10 in \cite{LES}.)
\\
\newline For any $\theta \in [0,2\pi]$, $0 <s$, let
$$\tau_\theta\co  q^{-1}(s) \into q^{-1}(s e^{i\theta})$$
be the parallel transport along $\gamma(t) = s e^{i\theta t}$, $ 0\leq t \leq 1$. 
Take the restriction 
$$\widehat \tau_\theta = \tau_\theta|_{(q^{-1}(s) \setminus \Sigma_s)}\co  q^{-1}(s) \setminus \Sigma_s \into q^{-1}(se^{i\theta}) \setminus \Sigma_{s e^{i\theta}}.$$
For $w \in D^2$, let 
$$\Phi_w \co  q^{-1}(w) \setminus \Sigma_w \into T^*S^n \setminus S^n$$
be the restriction of $\Phi$. Then it turns out
$$\Phi_{se^{i\theta}} \circ \widehat \tau_\theta \circ \Phi_s^{-1}\co  T^*S^n \setminus S^n \into T^*S^n \setminus S^n$$
satisfies 
\begin{gather}
 (\Phi_{se^{i\theta}} \circ \widehat \tau_\theta \circ \Phi_s^{-1})(u,v)  = \phi^{R_s(\mu)}_\theta
= \sigma_{\theta \widetilde R_s'(|v|)}(u,v). \label{transport}
\end{gather}
Here, $\widetilde R_s$ is a certain function (namely $\widetilde R_s(t) = \frac{1}{2}t-\frac{1}{2}(t^2 +s^2/4)^{1/2}$) with the following properties:
$$\lim_{t \rightarrow 0^+}\widetilde R_s'(t) = 1/2,$$
$$\widetilde R_s''(t) <0, \text{ for } t \geq 0,$$
$$\lim_{t \rightarrow \infty} \widetilde R_s'(t) = 0, \text{ and}$$
$$\widetilde R_s(-t) = R_s(t) -t.$$
These properties together with (\ref{transport}) show that  
when $|v| \approx \infty$, we have
$$(\Phi_{se^{i\theta}} \circ \widehat \tau_\theta \circ \Phi_s^{-1})(u,v) \approx (u,v)$$ 
and when $|v| \approx 0$, we have
$$(\Phi_{se^{i\theta}} \circ \widehat \tau_\theta \circ \Phi_s^{-1})(u,v) \approx \sigma_{\theta/2}(u,v).$$ 
In particular, if $n=1$ (so $T^*S^n = T^*S^1  \cong S^1 \times \bbR$) and $\theta = 2\pi$, then we get a classical Dehn twist.

\subsection{Complexifications of standard Morse functions on $\bbC^4$}
\label{standardMorse}
Now we restrict our attention to $\bbC^4$. Let  
$$q_0(z) = z_1^2  + z_2^2 +z_3^2 + z_4^2, $$
$$q_2(z) = z_1^2  + z_2^2 -z_3^2 - z_4^2, $$
$$q_4(z) = -z_1^2  - z_2^2 -z_3^2 -z_4^2. $$
The discussion in the last section applies directly to $q_0$. 
Let us re-label the important objects above:
$$\Phi^0 = \Phi, \rho_s^0 = \rho_s, \Sigma^0_w = \Sigma_w,  k^0 = k, \tau_\gamma^0 =\tau_\gamma, \tau_\theta^0 = \tau_\theta.$$

To understand $q_2$, let
$$\alpha_2\co  \bbC^4 \into \bbC^4, \alpha_2(z) = (z_1, z_2, iz_3,iz_4)$$
Then $\alpha_2$ gives an isomorphism of Lefschetz fibrations from 
$$q_2\co  \bbC^4 \into \bbC \text{ to }q_0\co   \bbC^4 \into \bbC.$$
The basic objects above become:
$$\Phi^2 = \Phi^0 \circ \alpha_2, $$ 
$$\rho_s^2 = \rho_s^0 \circ \alpha_2, $$ 
$$\Sigma^2_w = \alpha_2(\Sigma_w^0), $$ 
$$k^2(z) = k^0(\alpha_2(z)) = |z|^4 -|q_2(z)|^2,$$
$$\tau_\theta^2 = \alpha_2^{-1} \circ \tau_\theta^0 \circ \alpha_2.$$ 

Then everything is as before. In particular, the formula for the transport map $\tau_\theta^2$ is the same as in (\ref{transport}):
$$(\Phi_{se^{i\theta}}^2 \circ \widehat \tau_\theta^2 \circ (\Phi_s^2)^{-1})(u,v) = \sigma_{\theta \widetilde R_s'(|v|)}(u,v).$$
We could treat $q_4$ in a similar way, but instead we will use the fact that
$$q_4^{-1}(-s) = q_0^{-1}(s), \, s>0.$$
Namely, for $q_4$ we have a canonical identification
$$q_4^{-1}(-s) \into T^*S^3$$
(rather than the the usual $q^{-1}(s) \into T^*S^n$) given by 
$$\rho_s^0\co  q_0^{-1}(s) = q_4^{-1}(-s) \into T^*S^3.$$
Thus, for $w \in \bbC$, $s >0$, $z \in \bbC^4$, we take
$$\Phi^4_{w} =\Phi^0_{-w},$$
$$\rho_{-s}^4 = \rho_{s}^0,$$
$$\Sigma^4_w = \Sigma^0_{-w},$$
$$\Sigma^4 = \Sigma^0,$$
$$k^4(z) = k^0(z) = |z|^4 -|q_4(z)|^2.$$
(Note that $\Phi^2$ and $\Phi^4$ are not defined analogously, but using this $\Phi^4$ will save us a little trouble later.) 

\subsection{Cutting down the local models} \label{cutdown}
We describe a standard way to cut down the local models $q_0, q_2, q_4\co  \bbC^4 \into \bbC$ 
so that the fibers become $D(T^*S^3)$ and the transport maps become \emph{equal} to the identity 
near the boundary of the fibers (see lemma 1.10 in \cite{LES} for details). 
\\
\newline Fix $r>0$. Note this should be the \emph{same} $r$ as in \S \ref{4mfds}.
For $i=0,2,4$, let
$$E^{\, i}_{loc} = \{z \in \bbC^4 : k_i(z) \leq 4r^2, |q_i(z)| \leq 1\},$$
$$\pi^{\, i}_{loc} = q_i|_{E_{loc}^{\, i}}: E_{loc}^{\, i} \into D^2.$$
Then $E^{\, i}_{loc}$ is a manifold with corners and $\Phi^{\, i}$ restricts to a fiber preserving diffeomorphism, for which we keep the same notation
$$\Phi^{\, i}\co  E^{\, i}_{loc} \setminus \Sigma^{\, i} \into [D_r(T^*S^3) \setminus S^3] \times D^2.$$
Similarly $\rho^{\, i}$ restricts to canonical map
$$\rho^{\, i}_s\co  (\pi_i^{loc})^{-1}(s) \into D_r(T^*S^3), 0< s \leq 1.$$
We equip $E^i_{loc}$ with an exact symplectic form 
$\omega_i$ such that 
$$\omega_i|_{E_{loc}^i \cap k_i^{-1}([4(r/2)^2, 4r^2])} = \Phi^*(\omega_{T^*S^3}|_{D_{[r/2,r]}(T^*S^3) \times D^2})$$
and 
$$\omega_i|_{E_{loc}^i \cap k^{-1}([0,r/4])} = \omega_{\bbC^4}|_{E_{loc}^i \cap k^{-1}([0,r/4])}.$$
(On the intermediate region, $E_{loc}^i \cap k^{-1}([r/4,r/2])$, $\omega_i$ interpolates using some cutoff function.)
We use the same notation for the transport map 
$$\tau^i_\theta \co  (\pi_{loc}^i)^{-1}(s) \into (\pi_{loc}^i)^{-1}(se^{\sqrt{-1}\theta})$$ 
satisfies 
\begin{gather}
(\Phi_{se^{i\theta}}^i \circ \widehat \tau^i_\theta \circ (\Phi_s^i)^{-1})(u,v)  =
\sigma_{\theta  R_s'(|v|)}(u,v). \notag
\end{gather}
Here, $\widehat \tau^i_\theta$ is the restriction to $(\pi_{loc}^i)^{-1}(s) \setminus \Sigma^i_s$, and $R_s$ is a modification of $\widetilde R_s$ by a cut-off function, which satisfies:
\begin{gather} \label{transportB}
R_s(t) = \widetilde R_s(t), t \in [0,r/4]\\ \notag
R_s'(t) =0, t \in [r/2, r],\\ \notag
R_s''(t) <0, t \in [0,r/2]. \\ \notag
R_s(-t) = R_s(t) - t \text{ for small } |t|  \notag
\end{gather}
Thus $\tau_\theta^i$ is equal to the old transport map on $\bbC^4$ near $\Sigma$ and when we trivialize using $\Phi$, it becomes the identity near the boundary of $D_r(T^*S^3)$. 

\section{Construction of three Lefschetz fibrations} \label{pis}
In this section we review how to construct a Lefschetz fibration
with any prescribed symplectic manifold $M'$ as the fiber, and with a single vanishing sphere consisting of any prescribed exact Lagrangian sphere $L' \subset M'$ (see lemma 1.10 in \cite{LES} for details). We apply this construction for each of our local models $\pi^i_{loc}$, $i=0,2,4$ to produce three Lefschetz fibrations $\pi_i$, $i =0,2,4$. If we think of the target of $\pi: E \into D^2$
as containing three real critical values $c_0 < c_2 < c_4$ corresponding to those of $f$, then
$(E, \pi)$ will be constructed so that $(E_i, \pi_i)$ is equal to the part of $(E, \pi)$ lying over a small disk around $c_i$: 
$$(E|_{D_s(c_i)}, \pi|_{D_s(c_i)}) \cong (E_i, \pi_i).$$ 
The regular fiber of $\pi_i$, say $M_i$,  is a exact symplectomorphic to $M$ (the regular fiber of $\pi$) but $M_i$ is in some cases obtained from $M$ by a key twisting operation, which models the transport map $\pi^{-1}(c_2-s) \into \pi^{-1}(c_2+s)$ along a half circle in the lower half plane.

\subsection{A particular choice for $h$ in the definition of $L_4$} \label{particularh}

Before proceding we specify our choice of the function $h$ in the definition of $L_4$ in \S \ref{4mfds}. Namely we set 
$$h(t) = t/2 - R_{1/4}(t)$$
where $R_{1/4}$ is from (\ref{transportB}). (The choice $s=1/4$ comes from the choice of basepoint
(\ref{basepoint}), which comes later.) 

\subsection{Construction of $\pi_0\co  E_0 \into D^2$} \label{pi0}
In this section we construct a Lefschetz fibration 
$$\pi_0\co  E_0 \into D^2$$ 
such that for any $0< s \leq 1$ there is a canonical exact symplectic identification
$$\rho^0_s\co  \pi_0^{-1}(s) \into  M, 0< s \leq 1$$ 
with the vanishing sphere corresponding  to $L_0$. By construction of $M$, we have a canonical 
exact Weinstein embedding 
$$D(T^*S^3) \into M.$$
For convenience we shrink $r>0$ if necessary so that 
the disk bundle $D_r(T^*S^3)$, with respect to the round metric is contained in $D(T^*S^3) \subset M$.
Then we let
$$\phi_{L_0}\co  D_r^{g_{S^3}}(T^*S^3)  \into M$$
denote the restriction of the above Weinstein embedding.
\\
\newline Now take the trivial fibration $p\co  M \times D^2 \into D^2$ and let 
$$U = \phi_{L_0}(D_{(r/2,r]}(T^*S^n))\times D^2 \subset M \times D^2.$$
Now $E_{loc}^0$ has a corresponding subset 
$$W=  (\Phi^0)^{-1}(D_{(r/2,r]} (T^*S^n)\times D^2).$$
We define the total space $E_0$ by taking the quotient of the disjoint union 
$$ [(M \setminus \phi_{L_0}(D_{r/2}(T^*S^n)) \times D^2]\sqcup E_{loc}^0,$$ 
where we identify  $U$ with  $W$ using 
$$(\phi_{L_0} \times id_{D^2}) \circ \Phi^0|_{W}\co  W \into U.$$ 
See figure  \ref{FigureFiberGlue} for a schematic of the 2 dimensional gluing. 

\begin{figure}
\begin{center}
\includegraphics[width=3in]{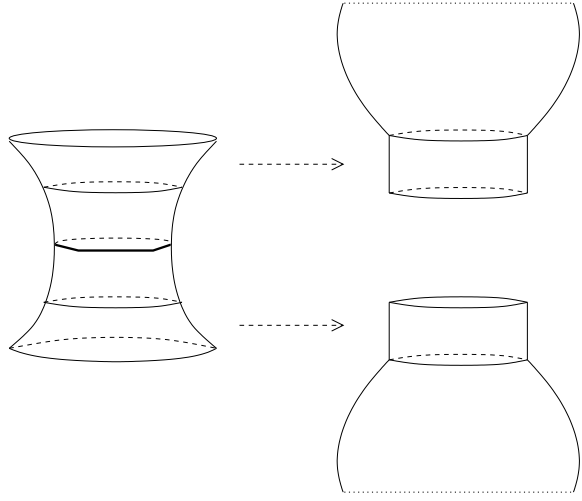} 
\caption{On the left is a 
picture of the fiber of $E_{loc}^0$ (in the case when $E_{loc}^0$ has real dimension 4), where the fiber is symplectomorphic to $D_r(T^*S^1)$; on the right is
$(M \setminus \phi_{L_0}(D_{r/2}(T^*S^1))$. The horizontal arrows indicate the gluing map $W \into U$. }
\label{FigureFiberGlue}
\end{center}
\end{figure} 

Then $\pi_{loc}^0$ and $p$ combine to give a map 
$\pi_0\co  E_0 \into D^2$, which has the structure of an exact symplectic Lefschetz fibration.
\\
\newline For any $0<s\leq 1$ we have the canonical identification 
$$(\rho_{loc}^0)_s\co  \pi_{loc}^{-1}(s) \into D_r(T^*S^n)$$ 
which gives rise to a canonical identification 
$$\rho^0_s\co  \pi_0^{-1}(s) \into M $$
which is defined to be the identity map on $ (M \setminus \phi_{L_0}(D_{r/2}(T^*S^n)) \times \{s\}$, and on $\pi_{loc}^{-1}(s)$ it is defined to be  
$$\phi_{L_0} \circ (\rho_{loc}^0)_s = \phi_{L_0} \circ \Phi^0|_{\pi_{loc}^{-1}(s)}.$$
For  any embedded path 
$$\gamma\co [0,1] \into D^2$$
such that $\gamma(0) = s>0$, $\gamma(1) =0$, the Lefschetz thimble of $\gamma$ is
$$\Delta_\gamma = \cup_t \Sigma_{\gamma(t)}^0 \subset E_{loc}^0 \subset E_0.$$
The vanishing sphere in $\pi^{-1}_0(s)$ is
$$V_\gamma =\Sigma_{s}^0 \subset (\pi_{loc}^0)^{-1}(s) \subset (\pi_0)^{-1}(s).$$
Note that 
$$\rho_s^0(V_\gamma) = L_0 \subset M.$$

\subsection{Construction of $\pi_2\co  E_2 \into D^2$}\label{pi2}
First, we modify $M$ to get a new manifold $M_2$ which will be the regular fiber of $\pi_2$.
For each $j=1, \ldots, k$  take an exact Weinstein embedding for $L_2^{\, j} \subset M$
$$\phi_{L_2^{\, j}} \co  D_r^{g_{S^3}}(T^*S^3)\into M$$
satisfying 
$$\phi_{L_2^{\, j}}|_{D_r(\nu^*K_-)} = \phi_j|_{S^1 \times D^2_r}$$
as in section \ref{4mfds} for some $r$. 
Note this is the same $r$ as in \ref{cutdown}, and \ref{4mfds} (shrink $r$ if necessary). We want to describe a twist operation which will happen near each $L_2^{\, j}$. But it is clearer to describe it near just one $L_2^{\, j}$ to start: Define $T_{\pi/2}^{L_2^{\, j}}(M)$ to be the be the quotient of the disjoint union 
$$[M \setminus \phi_{L_2^{\, j}}(D_{r/2}(T^*S^3)] \sqcup  D_r(T^*S^3),$$ 
where we identify 
$$D_{(r/2, r]}(T^*S^3) \text{ and }\phi_{L_2^{\, j}}( D_{(r/2, r]}(T^*S^3)) \subset M$$
using the map 
$$\phi_{L_2^{\, j}} \circ \sigma_{\pi/2}\co  D_{(r/2, r]}(T^*S_j) \into \phi_{L_2^{\, j}}( D_{(r/2, r]}(T^*L_2^{\, j})).$$
Thus we have glued the neighborhood of $L_2^{\, j}$ back in with a twist by $\sigma_{\pi/2}$.
This makes sense because $\sigma_{\theta}$ maps  $D_{(r/2, r]}(T^*L_2^{\, j})$ diffeomorphically onto itself
for any $\theta \in [0,2\pi]$.
Later we will use: 
$$\sigma_{\pi/2}(D_{(r/2, r]}(\nu^*K_+)) =  D_{(r/2, r]}(\nu^*K_-).$$
Now let 
$$M_2 = T_{\pi/2}(M) = T_{\pi/2}^{L_2^1}T_{\pi/2}^{L_2^2} ... T_{\pi/2}^{L_2^k}(M)$$
be the result of doing this twist operation in a neighborhood of all the $L_2^{\, j}$'s simultaneously. Of course, since the neighborhoods $\phi_{L_2^{\, j}}(D_r(T^*S^3)$ are disjoint these operations are completely independent.
This definition of  $M_2$ is  motivated by lemmas \ref{leftfiber=M} and \ref{halftwist} below. 
\\
\newline Let $S_j$, $j=1, \ldots, k$ denote $k$ copies of $S^3$.
For each $j$ we have a natural exact embedding 
$$\phi_j^2\co  D_r(T^*S^3)\into M_2$$
which is given by the inclusion into the disjoint union  
$$D_r(T^*S_j)\into [M \setminus \sqcup_{j=1}^{\, j =k} \phi_{L_2^{\, j}}(D_{r/2}(T^*S^3))]  \sqcup [\sqcup_{j=1}^{\, j =k} D_r(T^*S_j)],$$ 
followed by the quotient map.
Define 
$$(L_2^{\, j})' = \phi_j^2(S^3) \subset M_2,$$
and take the Weinstein embedding $\phi_{(L_2^{\, j})'}$ for $(L_2^{\, j})'$ to be 
$$\phi_{(L_2^{\, j})'} =\phi_j^2 \co  D_r(T^*S^3)\into M_2.$$
\\
Now take $k$ copies of the local model $\pi_{loc}^2\co  E_{loc}^2 \into D^2$ 
and denote them 
$$(\pi^2_{loc})^{\, j} \co  (E^2_{loc})^{\, j}\into D^2, j=1, \ldots, k.$$
To define 
$$\pi_2\co E_2 \into D^2$$ 
we do a similar quotient construction as for $\pi_0$; the only difference is we do it $k$ times, once for each of the disjoint Weinstein neighborhoods of $(L_2^{\, j})'$, $j=1, \ldots, k$.
That is, near $(L_2^{\, j})'$ we glue in $(E^2_{loc})^{\, j}$, using $\Phi^2$.
Then, there is a canonical identification
$$\rho_s^2\co  \pi_2^{-1}(s) \into M_2, 0< s \leq 1$$
defined as before.
For fixed $0< s \leq 1$, let
$$\gamma_2(t) = s(1-t)$$
so that $\gamma_2(0) =s$, $\gamma_2(1) =0$. 
There are $k$ disjoint Lefschetz thimbles corresponding to this one path $\gamma_2$, one in each 
local model $(E_{loc}^2)^{\, j}$. Namely,
$$\Delta_{\gamma_2}^{\, j} = \cup_t \Sigma_{\gamma_2(t)}^2 = \cup_t \, \alpha_2^{-1}(\Sigma_{\gamma_2(t)}^0) \subset (E_{loc}^2)^{\, j} \subset E_2.$$
The vanishing spheres
$$V_{\gamma_s}^{\, j} = \Sigma_{s}^2 \subset [(\pi_{loc}^2)^{\, j}]^{-1}(s) \subset \pi_2^{-1}(s)$$
satisfy
$$\rho_s^2(V_{\gamma_s}^{\, j}) = (L_2^{\, j})'.$$

\subsection{Construction of $\pi_4\co  E_2 \into D^2$}\label{pi4}
$\pi_4$ will also have regular fiber $M_2$, but at $s =-1$ rather than $s=1$. 
It remains to specify the vanishing sphere. We have already seen $M_2$ has exact Lagrangian spheres $(L_2^{\, j})'$ corresponding to $L_2^{\, j}$. There are also exact Lagrangian spheres
$$L_0', L_4' \subset M_2$$
which correspond to $L_0$, $L_4$. (in the sense of  lemma \ref{halftwist} below).
Namely, define $L_4'$ as a union in $M_2$:
$$L_4' = [L_0 \setminus (\cup_{j=1}^{\, j =k} \phi_j(S^1 \times D^2_{r/2}))] \cup [\cup_{j=1}^{\, j =k}  \phi_{(L_2^{\, j})'}(D_{r}(\nu^*K_+))].$$
Here, $\phi_{(L_2^{\, j})'}$  identifies 
$$D_{[r/2,r]}(\nu^*K_+) \subset D_r(T^*S^3)$$ 
with 
\begin{gather}
 \phi_{L_2^{\, j}}\circ \sigma_{\pi/2}(D_{[r/2,r]}(\nu^*K_+)) = \phi_{L_2^{\, j}}(D_{[r/2,r]}(\nu^*K_-)) = \phi_j(S^1 \times D^2_{[r/2,r]})\subset L_0. \label{twist}
\end{gather}
\\
The definition of $L_0'$ is analogous to that of $L_4$ in \S \ref{4mfds}, \ref{surgery}, as follows.
First set
$$\widetilde L_0' = \phi^{h(\mu)}_{-\pi}(D_r(\nu^*K_-)) \subset D_r(T^*S^3).$$
Note that 
$$\widetilde L_0' \cap D_r(\nu^*K_+) = D_{(r/2,r]}(\nu^*K_+).$$
Then $L_0'$ is defined to be 
$$L_0' = [L_0 \setminus (\cup_{j=1}^{\, j =k} \phi_j(S^1 \times D^2_{r/2}))]
\cup [\cup_{j=1}^{\, j =k} \phi_{(L_2^{\, j})'}(\widetilde L_0') ] \subset M_2.$$
Here, $\phi_{(L_2^{\, j})'}$ identifies 
$$D_{(r/2,r]}(\nu^*K_+)\subset  \widetilde L_0 '$$ 
with 
$$\phi_j(S^1 \times D^2_{[r/2,r]})\subset L_0,$$
as in (\ref{twist}). (See figure \ref{Fiberofpi0} and \ref{Fiberofpi2} below for pictures of the 2 dimensional analogues of $M$, the regular fiber of $\pi_0$, and $M_2$, the regular fiber of $\pi_2$, with their vanishing spheres.)
\begin{figure}
\begin{center}
\includegraphics[width=4in]{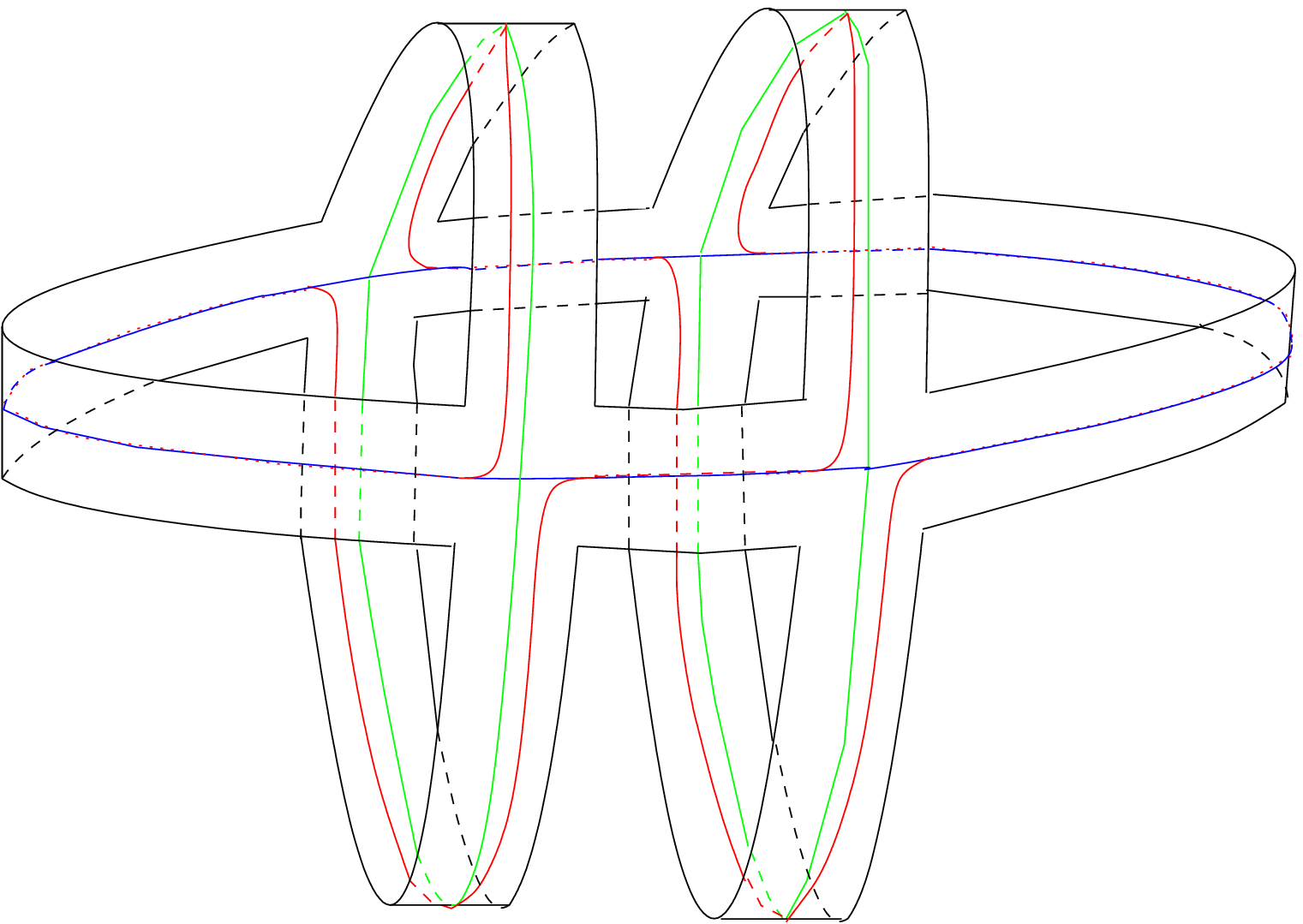} 
\caption{A 2 dimensional version of the regular fiber $M$ of $\pi_0$ at $c_0+1$.
$L_0$ corresponds to the blue curve; $L_2^{\, j}$ $j=1,2$ correspond to the two green curves;
$L_4$  corresponds to the (twisted) red curve.}
\label{Fiberofpi0}
\end{center}
\end{figure} 
\begin{figure}
\begin{center}
\includegraphics[width=4in]{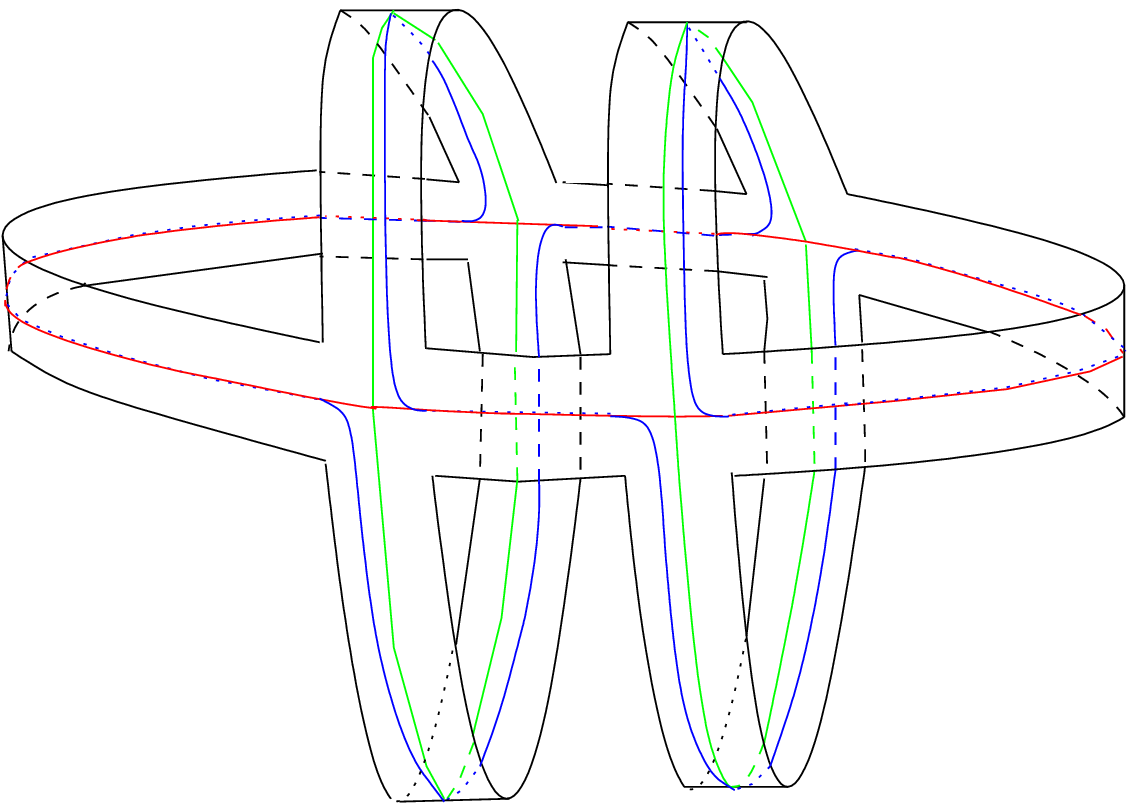} 
\caption{A 2 dimensional version of the regular fiber $M_2$ of $\pi_2$ at $c_2+1$.
$L_0'$ corresponds to the (twisted) red curve; $(L_2^{\, j})'$ $j=1,2$ correspond to the two green curves; $L_4'$  corresponds to the blue curve.}
\label{Fiberofpi2}
\end{center}
\end{figure} 
To construct $\pi_4$, we glue in the local model $\pi_4$ near $(L_4)' \subset M_2$, using $\Phi^4$. For $s>0$ the canonical identification
$$(\rho_{loc}^4)_{-s}\co  (\pi_{loc}^4)^{-1}(-s) \into D_r(T^*S^3)$$
gives rise to a canonical identification
$$\rho_{-s}^4\co  (\pi_{loc}^4)^{-1}(-s) \into M_2$$
at $-s$ rather than $s$.
The Lefschetz thimble for any embedded path $\gamma$, $\gamma(0) = -s$, $\gamma(1) =0$, is given by $$\Delta_\gamma = \cup_t \Sigma_{\gamma(t)}^4 \subset E_{loc}^{4} \subset E_4$$ 
and the vanishing sphere 
$$V_\gamma \subset (\pi_{loc}^4)^{-1}(-s)$$
satisfies 
$$\rho_{-s}^4(V_\gamma) = (L_4)'.$$

\subsection{The transport map for $\pi_i\co  E_i \into D^2$}
We denote the transport map for $\pi_i$ along $\gamma(t) = se^{i\theta t}$, $t \in [0,1]$ by 
$$\tau_\theta^i \co  \pi_i^{-1}(s) \into \pi_i(se^{i\theta}).$$
This is the same notation as for the local models $E_{loc}^i$, but since the transport map for $E_i$ is just the transport map for $E_{loc}^i$ extended by the identity map, this should not cause confusion.

\section{Construction of the Lefschetz fibration $\pi\co  E \into D^2$} \label{pi}
We now have three Lefschetz fibrations $\pi_0,\pi_2,\pi_4$ over the disk $D^2$. It would be convenient to have notation which distinguishes between the different critical points which at the moment are all labeled as $0 \in D^2$. Thus, let $c_0= 0,c_2=2, c_4=4$  and replace $\pi_i$ by $\pi_i + c_i$ but  keep the same notation.
Let $D(c_i)$ denote the disk of radius 1 centered at $c_i$ and let $D_{s}(c_i)$ denote the disk of radius $s$, $0< s \leq 1$. Thus, from now on each $\pi_i$ is a Lefschetz fibration 
$$\pi_i \co  E_i \into D(c_i), \, i=0,2,4,$$
where $c_i$ is the critical value. 
(The choice  $c_i=i$, is of course not essential; 
we just want the different labels $c_0, c_2, c_4$.)

\subsection{The fiber-connect sum of Lefschetz fibrations}\label{fiberconnectsum}

In this section we briefly the definition of the \emph{fiber connect sum} of two  Lefschetz fibrations.
See \cite[p.\,7,\,27]{LES} for details. Let $\pi^1\co  E^1 \into S^1$ and $\pi^2\co  E^2 \into S^2$ be two  Lefschetz fibrations. Here, $S^1$, $S^2$ are two Riemann surfaces with boundary; for us these will both be disks. 
Let $b^1 \in \partial S^1$ and $b^2 \in \partial S^2$. Let $S^1 \# S^2$ denote the boundary connect sum at $b^1$, $b^2$. The main input one needs is an exact symplectic identification of the fibers over $b^1, b^2$:
$$\Psi^{12}\co  (\pi^1)^{-1}(b_1) \into (\pi^2)^{-1}(b_2).$$
Then, there is a Lefschetz fibration 
$$\pi^1 \# \pi^2 \co  E^1 \# E^2 \into S^1 \# S^2$$
called the fiber-connect sum of $\pi_1$ and $\pi_2$, which is of course
obtained by identifying the fibers over $b_1$ and $b_2$. (More formally, one deletes a small neighborhoods of these fibers and identifies along the resulting boundaries.)
As a technical condition, one also needs the symplectic connections (given by the symplectic orthogonals to the fibers) to be \emph{flat} in a neighborhood of $b^1, b^2$. (We will arrange that later for our case.)

\subsection{Construction of $\pi: E \into D^2$ by fiber-connect sum.} Take the boundary connect sum  $D(c_0) \# D(c_2)$ at  $c_0+1$ and $c_2-1$.
Then take $S = [D(c_0) \# D(c_2)] \# D(c_4)$, done at $c_2+1$ and $c_4-1$. Note that $S \cong D^2$.
The plan for this section is to first do coresponding fiber connect sums (as in \S \ref{fiberconnectsum}):
$$\pi_0 \# \pi_2\co  E_0 \# E_2 \into D(c_0) \# D(c_2)$$
and 
$$(\pi_0 \# \pi_2) \# \pi_4\co  (E_0 \# E_2) \# E_4  \into S.$$
Then we will define 
$$\pi = \pi_0 \# \pi_2 \# \pi_4, \, E = E_0 \# E_2 \# E_4.$$

The main step is to specify exact symplectic identifications 
$$\Psi_{02}\co  \pi_0^{-1}(c_0+1) \into \pi_2^{-1}(c_2-1), \text{ and}$$
$$\Psi_{24}\co  \pi_2^{-1}(c_2+1) \into \pi_4^{-1}(c_4-1).$$
First, $\Psi_{24}$ is easy because, for $0<s\leq 1$, there are canonical maps
$$\rho^2_{s} \co \pi_2^{-1}(c_2+s)\into M_2, $$
$$\rho^4_{-s} \co \pi_4^{-1}(c_4-s)\into M_2.$$
So we set 
$$\Psi_{24} =(\rho^4_{-1})^{-1} \circ  \rho^2_{1}.$$ 
\\
The next lemma will tell us how to define $\Psi_{02}$.
\begin{lemma}\label{leftfiber=M} For $0< s \leq 1$, there is a canonical exact symplectic isomorphism  
$$\nu^2_{-s}\co  \pi_2^{-1}(c_2-s)\into M.$$
 \end{lemma}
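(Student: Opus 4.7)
The plan is to define $\nu^2_{-s}$ piece-by-piece across the decomposition of $\pi_2^{-1}(c_2-s)$ into a ``trivial part'' $[M_2 \setminus \bigcup_j \phi_j^2(\mathrm{Int}\, D_{r/2}(T^*S^3))] \times \{c_2-s\}$ and the $k$ local model fibers $(\pi^{2,j}_{\mathrm{loc}})^{-1}(-s)$, which are glued along the annular regions $|v|\in(r/2,r]$. On the trivial part I would take the inverse of the quotient map from the construction of $M_2$, landing in $M\setminus\bigcup_j \phi_{L_2^j}(\mathrm{Int}\, D_{r/2}(T^*S^3))\subset M$. On each local model fiber I set
\[
\nu^2_{-s}\big|_{(\pi^{2,j}_{\mathrm{loc}})^{-1}(-s)} \;=\; \phi_{L_2^j}\circ F\circ\rho^2_s\circ\tau^{\mathrm{low}},
\]
where $\tau^{\mathrm{low}}\co (\pi^{2,j}_{\mathrm{loc}})^{-1}(-s) \into (\pi^{2,j}_{\mathrm{loc}})^{-1}(s)$ is symplectic parallel transport along the lower half-circle around $c_2$, $\rho^2_s$ is the canonical identification with $D_r(T^*S^3)$ from \S\ref{cutdown}, and $F = \phi^{h(\mu)}_{\pi/2}$ is the Hamiltonian map from \S\ref{surgeryh} with $h$ as in \S\ref{particularh}. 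The key features of $F$ are that it equals $\sigma_{\pi/2}$ on the annulus $|v|\in [r/2,r]$ and extends smoothly to the identity on the zero section $S^3$.

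The main algebraic check is compatibility on the annular overlap. Using the transport formula from \S\ref{cutdown}, on $|v|\ge r/2$ one has $\rho^2_s\circ\tau^{\mathrm{low}}(z) = \Phi^2_{-s}(z) = (u',v')$ since $R'_s\equiv 0$ there, so the local model recipe yields $\phi_{L_2^j}(\sigma_{\pi/2}(u',v'))$. On the other hand, the gluing in the construction of $E_2$ identifies $z$ with $(\phi_j^2(u',v'), c_2-s)$ in the trivial part, and by the definition of $M_2 = T_{\pi/2}(M)$ the point $\phi_j^2(u',v')$ equals $\phi_{L_2^j}(\sigma_{\pi/2}(u',v'))$ in $M$ on this range, so the two prescriptions agree precisely.

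The principal obstacle is smoothness across the vanishing sphere $\Sigma^2_{-s}$: the naive expression $\phi_{L_2^j}\circ \sigma_{\pi/2}\circ \Phi^2_{-s}$ is undefined there, since neither $\Phi^2_{-s}$ nor $\sigma_{\pi/2}$ extends over the zero section $S^3$. The rewriting via $\tau^{\mathrm{low}}$ and $F$ is designed precisely to circumvent this: $\tau^{\mathrm{low}}$ is symplectic transport around a regular loop and hence extends smoothly, mapping $\Sigma^2_{-s}$ diffeomorphically to $\Sigma^2_s$; $\rho^2_s$ is globally defined on $q_2^{-1}(s)$ and sends $\Sigma^2_s$ to $S^3$; and $F$ extends smoothly to the identity on $S^3$ by the $h$-conditions ($h'(0)=0$ together with the symmetry $h(-t)=h(t)-t$ for small $|t|$), via \cite[Lemma 1.8]{LES}. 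Exact symplecticness is then automatic since each constituent is exact symplectic: $F$ is a Hamiltonian flow, $\rho^2_s$ is exact symplectic by design, and parallel transport in an exact Lefschetz fibration is exact symplectic. ``Canonical'' means that no additional choices beyond those already fixed in the setup are needed; in particular, the choice of lower (rather than upper) half-circle is the one compatible with the twist $M\leadsto T_{\pi/2}(M)$ used to define $M_2$, which will be verified in Lemma~\ref{halftwist}.
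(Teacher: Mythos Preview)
Your proof is correct, but it takes a more roundabout route than the paper's. The paper observes directly that, after expanding the definition, the gluing map for $\pi_2^{-1}(c_2-s)$ on each annular overlap is
\[
\phi_{L_2^{\,j}} \circ \sigma_{\pi/2} \circ \Phi^2_{-s}
\;=\;
\phi_{L_2^{\,j}} \circ \sigma_{\pi/2} \circ \bigl(\sigma_{-\pi/2}\circ \rho^0_s \circ m(i)\circ \alpha_2\bigr)
\;=\;
\phi_{L_2^{\,j}} \circ \rho^0_s \circ m(i)\circ \alpha_2,
\]
so the $\sigma_{\pm\pi/2}$'s cancel and one is left with the \emph{globally defined} linear map $\rho^0_s \circ m(i)\circ \alpha_2\co q_2^{-1}(-s)\to D_r(T^*S^3)$. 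Declaring $\nu^2_{-s}$ to be this on each local piece and the identity on the common part gives the isomorphism immediately, with no parallel transport and no appeal to $F$ or to Seidel's extension lemma. Your approach instead routes through $\tau^{\mathrm{low}}$ and $F$; this works, and has the virtue of making the relation to the ``half Dehn twist'' of Lemma~\ref{halftwist} transparent from the outset, but it is heavier machinery than needed here. Two small points: first, your $F$ should be $\phi^{h(\mu)}_{\pi}$ rather than $\phi^{h(\mu)}_{\pi/2}$ in order to restrict to $\sigma_{\pi/2}$ on the annulus (since $h'=1/2$ there); this is a typo already present in \S\ref{surgeryh}. Second, because $h$ is tied to $R_{1/4}$, your $\nu^2_{-s}$ agrees with the paper's literally only at $s=1/4$; for other $s$ the two differ by a compactly supported Hamiltonian diffeomorphism inside each $\phi_{L_2^{\,j}}(D_{r/2}(T^*S^3))$, which is harmless for this lemma but worth noting if you intend to reuse $\nu^2_{-s}$ downstream.
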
 
\begin{proof} Note that 
$$M_2 \setminus (\cup_{j=1}^{\, j =k} \phi_{(L_2^{\, j})'}(D_{ r/2}(T^*S^3))$$ 
is canonically isomorphic to 
$$M \setminus (\cup_{j=1}^{\, j =k} \phi_{L_2^{\, j}}(D_{ r/2}(T^*S^3)).$$ 
From now on we will identify these. 
Then, by definition, $\pi_2^{-1}(c_2-s)$ is equal to the quotient manifold obtained from the disjoint union:
$$M \setminus (\cup_{j=1}^{\, j =k} \phi_{L_2^{\, j}}(D_{ r/2}(T^*S^3)) \sqcup  [ \sqcup_{j=1}^{\, j =k} [(\pi^2_{loc})^{\, j}]^{-1}(c_2-s)]$$
where for each $j$ we identify 
$$U_j = (\Phi_{-s}^2)^{-1} (D_{(r/2,r]}(T^*S^3)) \subset [(\pi^2_{loc})^{\, j}]^{-1}(c_2-s)$$
with
$$W_j= \phi_{L_2^{\, j}}(D_{[ r/2,r]}^{g_{S^3}}(T^*S^3)) \subset M,$$
using the gluing maps
$$(\phi_{L_2^{\, j}} \circ \sigma_{\pi/2} \circ \Phi_{-s}^2)|_{U_j} \co  U_j \into W_j.$$
Now recall that 
$$\Phi_{-s}^2 = \Phi_{-s}^0 \circ \alpha_2 =  \sigma_{-\pi/2} \circ  \rho_s^0 \circ m(i) \circ \alpha_2.$$
so the $j$th gluing map is
$$(\phi_{L_2^{\, j}} \circ \rho_s^0 \circ m(i) \circ \alpha_2)|U_j \co  U_j \into W_j.$$
\\
Think of $M$ as a quotient manifold in the following tautological way: $M$ is the quotient of 
$$M \setminus (\cup_{j=1}^{\, j =k} \phi_{L_2^{\, j}}(D_{ r/2}(T^*S^3)) \sqcup  [ \sqcup_{j=1}^{\, j =k} D_r(T^*S_j)$$
where $S_j = S^3$ for all $j$ and we glue using 
$$\phi_{L_2^{\, j}}\co  D_{(r/2,r]}(T^*S_j) \into \phi_{L_2^{\, j}}(D_{[r/2,r]}(T^*S^3)).$$
Now define the map 
$$ \nu^2_{-s}\co  \pi_2^{-1}(c_2-s) \into M$$
to be the identity on the common part
$$(M \setminus (\cup_{j=1}^{\, j =k}\phi_{L_2^{\, j}}(D_{ r/2} (T^*S^3)))$$
and on the other parts $[(\pi^2_{loc})^{\, j}]^{-1}(c_2-s)$ define
\begin{gather}
\nu^2_{-s}|_{[(\pi^2_{loc})^{\, j}]^{-1}(c_2-s)}\co  [(\pi^2_{loc})^{\, j}]^{-1}(c_2-s) \into D_r(T^*S_j) \\ \notag  
\nu^2_{-s}|_{[(\pi^2_{loc})^{\, j}]^{-1}(c_2-s)} = \rho_s^0 \circ m(i) \circ \alpha_2 \label{nu}
\end{gather}

Then $\nu^2_{-s}$ is compatible with the gluing maps, so it gives a well defined isomorphism of quotient manifolds.
Namely, the gluing map for $M$ is $\phi_{L_2^{\, j}}$ and the gluing map for 
$\pi_2^{-1}(c_2-s)$ is $\phi_{L_2^{\, j}} \circ \nu^2_{-s}|_{[(\pi^2_{loc})^{\, j}]^{-1}(c_2-s)}.$
\end{proof}

Thus, we define 
$$\Psi_{02} = (\nu^2_{-1})^{-1} \circ \rho^0_1,$$
where 
$$\rho_s^0\co  \pi_0^{-1}(c_0+s) \into M, 0< s \leq 1$$
is the canonical isomorphism.
\\
\newline To do the fiber connect sums, we first need to make each $\pi_i$ flat near $c_i \pm 1$. We accomplish that near the whole boundary of $D(c_i)$ as in remark 1.4 in \cite{LES}. Namely, take a smooth map $\psi \co  D^2 \into D^2$ such that:
\\
\newline $\psi|D_{1/2}$  is just the inclusion $D_{1/2} \into D^2$;
\\$\psi|_{D_{3/4}^2}$ maps $D_{3/4}^2$ diffeomorphically onto $D^2$, by a radial map;
\\$\psi|_{D_{[3/4,1]}^2}$ radially collapses $D_{[3/4,1]}^2$ onto $\partial D^2$.
Here, $D^2_{[a,b]} = \{ x \in D^2 : |x| \in [a,b]\}$.
\\
\newline Then, the pull back $\psi^*\pi_i$ is flat near the boundary of $D(c_i)$, and we replace $\pi_i$ by $\psi^*\pi_i$ but keep the same notation. Note that, since  $\psi|_{D_{1/2}}$  is just the inclusion, $E_i|_{{D_{1/2}(c_i)}}$ has not been modified at all. So, in particular, the transport map
$\tau^i_\theta$ along $\gamma(t) = c_i + se^{i\theta}$, for $0< s \leq 1/2$ is the same as before.
\\
\newline Concretely, when we do the boundary connect sums of the base manifolds $D(c_i)$, the segments $[c_i-1, c_i+1] \subset D(c_i)$ are glued together  to form an interval $I$, and we think of $S$ as embedded in $\bbC$ so that $I \subset \bbR$. In fact, parts of $[c_i-1, c_i+1]$ are chopped off before we glue, so $I$ is shorter than $[c_0-1, c_4+1] = [-1,5]$. Nevertheless we will refer to intervals such as $[c_0-1/4, c_2+1/4]$ with the understanding that this means the corresponding sub-interval of $I$, i.e. $[c_0-1/4,c_2+1] \# [c_2-1, c_2+1/4]$. Also note that, for example, the transport map along 
$[c_0-1/4, c_2+1/4]$,  
$$\tau_{[c_0-1/4, c_2+1/4]} \co  \pi^{-1}(c_0-1/4) \into \pi^{-1}(c_2+1/4)$$
is equal to  the composite 
$$ \tau^2_{[c_2-1, c_2+1/4]} \circ  \Psi_{02} \circ \tau^0_{[c_0-1/4, c_0+1]},$$
where $\tau^2_{[c_2-1, c_2+1/4]}$ and $\tau^0_{[c_0-1/4, c_0+1]}$ are the transport maps for 
$\pi_0$ and $\pi_2$ respectively. 

\section{Computing the vanishing spheres of $\pi\co  E \into D^2$}\label{computingVC}

Fix the  base point $b \in D^2(c_2)$ to be  
\begin{gather} \label{basepoint}
 b = c_2 - 1/4.
\end{gather}
Then $\pi^{-1}(b) = \pi_2^{-1}(c_2-1/4)$ and so, by lemma \ref{leftfiber=M}, there is a canonical isomorphism 
$$\nu^2_{-1/4}\co  \pi^{-1}(b) \into M.$$
In this section we show that for suitable vanishing paths $\gamma_0, \gamma_2, \gamma_4$,
the vanishing spheres 
$$V_{\gamma_1}, V_{\gamma_2}^{\, j}, V_{\gamma_4} \subset \pi^{-1}(b), \, j=1, \ldots, k$$ 
correspond precisely to  $L_0$, $L_2^{\, j}$, $L_4$, under the map $\nu^2_{-1/4}$. Here, $\gamma_2$ will give rise to $k$ disjoint vanishing spheres $V_{\gamma_2}^{\, j} \subset \pi^{-1}(b)$
one for each critical point $x_2^{\, j}$. 
\\
\newline For the critical values  $c_0$ and $c_2$, we take the vanishing paths $\gamma_0, \gamma_2$ in $\bbR$
which parametrize the closed segments $[c_0, b]$ and $[b, c_2]$ at unit speed. 
For $c_4$ we take the composite of two paths $\gamma_4^0$ and $\gamma_4^1$.
Let 
$$\gamma_4^0(t) = c_2 + (1/4)e^{(-\pi + \pi t)i }, t \in [0, 1];$$
this parameterizes the half circle from $c_2-1/4$, $c_2 +1/4$ in the lower-half plane. Let $\gamma_4^1$ be the path in $\bbR$ which parameterizes  the closed segment $[c_2+1/4, c_4]$ 
at unit speed. Then let $\gamma_4$ be the vanishing path from $b$ to $c_4$ which is obtained by following 
$\gamma_4^0$ and then $\gamma_4^1$; see figure \ref{figurevanishingpaths}.
\begin{figure}
\begin{center}
 \includegraphics[width=3in]{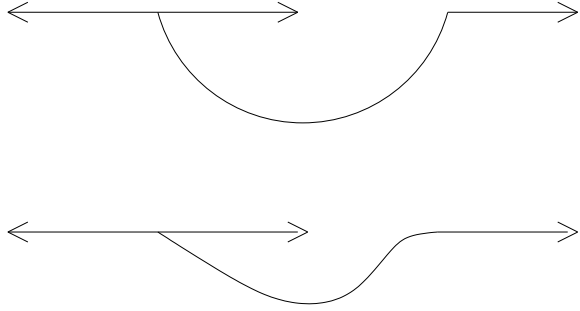} 
\caption{The three vanishing paths $\gamma_0, \gamma_2, \gamma_4$ (top) and
the version where  $\gamma_2$ is smooth (see remark \ref{smoothLefTh})  $\gamma_0, \widetilde \gamma_2, \gamma_4$ (bottom).}
\label{figurevanishingpaths}
\end{center}
\end{figure} 
 Note that $\gamma_4$ is only piece-wise smooth, so it does not give a
smooth Lefschetz thimble (but see remark \ref{smoothLefTh} below). Nevertheless it does give a smooth vanishing sphere 
$$V_{\gamma_4} \subset \pi^{-1}(b)$$
defined as follows.   Take the vanishing sphere
$$V_{\gamma_4^1} \subset \pi^{-1}(c_2+1/4)$$
and then define 
$$V_{\gamma_4} = \tau_{\gamma_4^0}^{-1}(V_{\gamma_4^1}) \subset \pi^{-1}(b),$$
where $\tau_{\gamma}$ denotes the transport along a path $\gamma$.

\begin{remark} \label{smoothLefTh} To apply the results of \cite{S08} one needs \emph{smooth} Lefschetz thimbles (since these are key objects in the Fukaya category associated to a Lefschetz fibration). To bridge this gap, wiggle  $\gamma_4^0$ slightly  to $\widetilde \gamma_4^0$ so that $\widetilde \gamma_4^0$ is tangent to all orders to the real line at $c_2+1/4$ (and with tangent vector pointing to the right; see figure \ref{figurevanishingpaths}). Precisely speaking, $\gamma_4^0$ is smoothly isotopic to $\widetilde \gamma_4^0$. Then, the concatenation of  $\widetilde \gamma_4^0$ and $\gamma_4^1$, say $\widetilde \gamma_4$, is smooth, provided we parametrize both paths so they have  unit speed. 
We express $V_{\widetilde \gamma_4}$ in away similar to $V_{\gamma_4}$ above: 
$$V_{\widetilde \gamma_4} = \tau_{\widetilde \gamma_4^0}^{-1}(V_{\gamma_4^1}).$$
Since $\widetilde \gamma_4^0$ is smoothly isotopic to $\gamma_4^0$  it follows that
$\tau_{\widetilde \gamma_4^0}^{-1}$ is smoothly isotopic to $\tau_{\gamma_4^0}^{-1}$ 
through exact symplectomorphisms
and hence $V_{\widetilde \gamma_4}$ is exact isotopic to $V_{\gamma_4}$. 
Moreover, by making $\widetilde \gamma_4^0$ $C^0$-close to $\gamma_4^0$, we can arrange that 
$V_{\widetilde \gamma_4}$ is $C^0$-close to  $V_{\gamma_4}$, for any desired closeness.
($C^0$ close is best possible since $\widetilde \gamma_2^0$ and $\gamma_2^0$ neccesarily have orthogonal derivatives at the right end-point.) With this understood, 
we stick with our definition of $V_{\gamma_4}$ for convenience.
\end{remark}

We point out that $\tau_{\gamma_2^0}^{-1}$ coincides with the transport map
$$\tau_{-\pi}^2\co  \pi^{-1}_2(c_2-1/4) \into \pi^{-1}_2(c_2+1/4)$$ 
along $\gamma(t) = c_2 + \frac{1}{4}e^{-\pi i t},\,  t \in [0,1]$.
The next lemma describes this map.

\begin{lemma}\label{halftwist} 
\begin{enumerate}
 \item  There is a canonical exact symplectic isomorphism 
$$\tau\co  M_2 \into M$$ \label{tau}
such that 
\item $\tau(L_0') = L_0$,  $\tau((L_2^{\, j})') = L_2^{\, j}$ and $\tau(L_4') = L_4$. \label{tau(L')}
\item \label{tauequalstwist}Under the canonical  identifications $\pi_2^{-1}(c_2+1/4) \cong M_2$,  $\pi_2^{-1}(c_2-1/4)\cong M$, 
$\tau$ becomes  the transport map 
$$\tau_{-\pi}^2 \co \pi_2^{-1}(c_2+1/4) \into  \pi_2^{-1}(c_2-1/4),$$
that is,
$$\nu^2_{-1/4} \circ  \tau_{\pi/2}^2 \circ (\rho_{1/4}^{2})^{-1}= \tau.$$

\end{enumerate}

\end{lemma}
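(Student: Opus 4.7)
The plan is to \emph{define} $\tau$ by the right-hand side of (\ref{tauequalstwist}), i.e.\ to set
\[
   \tau \;:=\; \nu^2_{-1/4} \circ \tau^2_{-\pi} \circ (\rho^2_{1/4})^{-1}
\]
(where $\tau^2_{-\pi}$ denotes parallel transport along the lower semicircle from $c_2+1/4$ to $c_2-1/4$, as in part (\ref{tau(L')}); the subscript in (\ref{tauequalstwist}) appears to be a typo for $-\pi$). With this definition (\ref{tauequalstwist}) is tautological, and (\ref{tau}) follows at once: $\rho^2_{1/4}$ and $\nu^2_{-1/4}$ are canonical exact symplectic isomorphisms by the construction of $\pi_2$ and Lemma~\ref{leftfiber=M}, while $\tau^2_{-\pi}$ is exact symplectic as symplectic parallel transport in a Lefschetz fibration.

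For (\ref{tau(L')}) I would decompose $M_2$ into the \emph{common} region $U := M_2 \setminus \bigcup_j \phi_{(L_2^{\,j})'}(D_{r/2}(T^*S^3))$, canonically identified with $M \setminus \bigcup_j \phi_{L_2^{\,j}}(D_{r/2}(T^*S^3))$ (the identification is the identity away from the boundary annulus, and $\phi_{L_2^{\,j}} \circ \sigma_{\pi/2}$ on the annulus, by the construction of $M_2$), and the \emph{local} regions $\phi_{(L_2^{\,j})'}(D_r(T^*S^3))$. On $U$, each factor of $\tau$ reduces to this canonical identification: $\rho^2_{1/4}$ and $\nu^2_{-1/4}$ restrict to the identity on the common part by construction, and $\tau^2_{-\pi}$ is the identity because the symplectic connection of $\pi_2$ is trivial outside the local fibers $(E^2_{loc})^{\,j}$. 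Hence $\tau$ equals the canonical identification on $U$. On the local region near $(L_2^{\,j})'$ I pull $\tau$ back by $\phi_{(L_2^{\,j})'}$ on the source and $\phi_{L_2^{\,j}}$ on the target to produce a map $\Psi_j: D_r(T^*S^3) \to D_r(T^*S^3)$. Combining the transport formula (\ref{transport}),
\[
   \Phi^2_{-1/4} \circ \tau^2_{-\pi} \circ (\Phi^2_{1/4})^{-1} \;=\; \sigma_{-\pi R'_{1/4}(|v|)},
\]
with the identities $\rho^2_{1/4}\big|_{\text{local}} = \phi_{(L_2^{\,j})'} \circ \Phi^2_{1/4}$ and $\nu^2_{-1/4}\big|_{\text{local}} = \phi_{L_2^{\,j}} \circ \sigma_{\pi/2} \circ \Phi^2_{-1/4}$ (the latter equivalent to the factorization $\Phi^2_{-s} = \sigma_{-\pi/2} \circ \rho^0_s \circ m(i) \circ \alpha_2$ used in the proof of Lemma~\ref{leftfiber=M}), a direct computation gives
\[
   \Psi_j(u,v) \;=\; \sigma_{\pi/2 - \pi R'_{1/4}(|v|)}(u,v) \;=\; \sigma_{\pi h'(|v|)}(u,v),
\]
where $h$ is the particular function fixed in \S\ref{particularh}, so that $h'(|v|) = 1/2 - R'_{1/4}(|v|)$. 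At $|v| \in [r/2,r]$ this reduces to $\sigma_{\pi/2}$, matching the gluing used to build $M_2$, so $\Psi_j$ is smooth across the interface with $U$; at $|v|=0$ it is the identity, so $\Psi_j$ extends smoothly across the zero section.

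With $\Psi_j = \sigma_{\pi h'(|v|)}$ in hand, verifying (\ref{tau(L')}) is essentially bookkeeping. First, $\Psi_j$ fixes the zero section (equivalently, symplectic parallel transport preserves the critical set $\Sigma$, so $\tau^2_{-\pi}(\Sigma^2_{1/4}) = \Sigma^2_{-1/4}$), giving $\tau((L_2^{\,j})') = L_2^{\,j}$. Second, by construction $\Psi_j$ coincides with the map $\widetilde F$ of \S\ref{surgeryh}, so $\Psi_j(D_r(\nu^*K_+)) = \widetilde T$; combined with the identity on $U$ this gives $\tau(L_4') = L_4$. Third, the relation $h'(|v|) = 1/2 - R'_{1/4}(|v|)$ is precisely what makes $\Psi_j \circ \phi^{h(\mu)}_{-\pi}$ equal the identity on $D_r(\nu^*K_-)$ (this is the defining motivation for the choice of $h$), so $\Psi_j(\widetilde L_0') = D_r(\nu^*K_-)$; combined with the identity on $U$ this gives $\tau(L_0') = L_0$. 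The main technical obstacle will be the local computation of $\Psi_j$: the factorization $\Phi^2_{-s} = \sigma_{-\pi/2} \circ \rho^0_s \circ m(i) \circ \alpha_2$, which injects the extra $\pi/2$ into $\Psi_j$, must be verified from the definition of $\Phi^0$, and the convention relating the Hamiltonian flow $\phi^{h(\mu)}_t$ to $\sigma_{\bullet}$ must be tracked consistently across both $\widetilde F$ and $\phi^{h(\mu)}_{-\pi}$ for the three case-checks to line up.
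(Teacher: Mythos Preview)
Your proposal is correct and is essentially the same argument as the paper's, with the logical order reversed: the paper \emph{defines} $\tau$ piecewise as the identity on the common part and $\phi^{h(\mu)}_\pi$ on each $D_r(T^*S_j)$, verifies (\ref{tau(L')}) directly from that definition, and then proves (\ref{tauequalstwist}) by computing the transport composite $\nu^2_{-1/4}\circ\tau^2_{-\pi}\circ(\rho^2_{1/4})^{-1}$ locally and showing it equals $\phi^{h(\mu)}_\pi$; you instead define $\tau$ by the transport composite, compute it locally to obtain $\Psi_j=\sigma_{\pi h'(|v|)}=\phi^{h(\mu)}_\pi$, and then verify (\ref{tau(L')}). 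The key local computation (your derivation of $\Psi_j$ via $\Phi^2_{-1/4}=\sigma_{-\pi/2}\circ\rho^0_{1/4}\circ m(i)\circ\alpha_2$ and the transport formula) and the three case-checks for (\ref{tau(L')}) are identical in content to the paper's.
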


\begin{proof} To prove (\ref{tau}), recall that $M_2$ is defined to be the quotient of 
$$[M \setminus (\cup_j \phi_{L_2^{\, j}}(D_{r/2}(T^*S^3))] \sqcup [\cup_j D_r(T^*S_j)]$$
where $S_j = S^3$ for all $j$ and we have gluing maps 
$$\phi_{L_2^{\, j}} \circ \sigma_{\pi/2}\co  D_{[r/2,r]}(T^*S_j) \into  \phi_{L_2^{\, j}}(D_{[r/2,r]}(T^*S^3)).$$
We can think of $M$ as a quotient of the same space where we use the gluing map $\phi_{L_2^{\, j}}$
instead of $\phi_{L_2^{\, j}} \circ \sigma_{\pi/2}.$
\\
\newline Define 
$$\tau\co  M_2 \into M$$ 
to be the identity on the common part
$$M\setminus \phi_{L_2^{\, j}}(D_r/2(T^*S^3))$$ and define it to be 
$\phi^{h(\mu)}_{\pi}$ on $D_r(T^*S_j)$ for each $j$.
(Here $h$ is from \S \ref{particularh}.) Since 
$$\phi^{h(\mu)}_{\pi}\co  D_r(T^*S^3)\into D_r(T^*S^3)$$
is an extension of 
$$\sigma_{\pi/2}\co  D_{(r/2,r]}(T^*S^3)\into  D_{(r/2,r]}(T^*S^3),$$
it follows that $\tau$ is compatible with the gluing maps and gives a well defined isomorphism.
\\
\newline To prove (\ref{tau(L')}), first note that $\tau((L_2^{\, j})') = L_2^{\, j}$ is obvious, since $(L_2^{\, j})'$ and $L_2^{\, j}$ both correspond to $S_j \subset D_r(T^*S_j)$, and
$\phi_{\pi}^h(\mu)(S_j)=S_j$. For the rest, recall from \S \ref{4mfds}, \ref{surgeryh}, \ref{particularh} that the main ingredient in $L_4$ is
$$\widetilde T = \phi_{\pi}^{h(\mu)}(D_r(\nu^*K_+)) \subset D_r(T^*S_j).$$
Indeed, $L_4$ is defined to be the (overlapping) union 
$$L_4 = L_0 \setminus (\cup_j \phi_j(S^1 \times D^2_{r/2})) \cup (\cup_j \phi_{L_2^{\, j}}(\widetilde T).$$
We can think of $L_0$ as being defined by the same formula, but with $\widetilde T$ replaced by
$$\widetilde T_0 = D_r(\nu^*K_-).$$
Next recall that the main ingredient in $L_4' \subset M_2$ is
$$\widetilde T\,' = D_r(\nu^*K_+) \subset D_r(T^*S_j),$$
and $L_4'$ is defined as 
$$L_4' = L_0 \setminus (\cup_j \phi_j(S^1 \times D^2_{r/2})) \cup (\cup_j \phi_{(L_2^{\, j})'}(\widetilde T\,')),$$
where $\phi_{(L_2^{\, j})'}$ identifies $D_{[r/2,r]}(\nu^*K_+)$ with $\phi_j(S^1 \times D^2_{[r/2,r]}) \subset L_0$.
And similarly the main ingredient in $L_0' \subset M_2$ is
$$\widetilde T_0' = \phi^{h(\mu)}_{-\pi}(D_r(\nu^*K_+)).$$
Now, with all this in mind, we have $\tau(L_0') = L_0$  
because in each $D_r(T^*S_j) = D_r(T^*S^3)$, we have
$$\tau(\widetilde T_0') = \phi^{h(\mu)}_{\pi}( \phi^{h(\mu)}_{-\pi}(D_r(\nu^*K_+)) = D_r(\nu^*K_+) =\widetilde T_0.$$
And  $\tau(L_4') = L_4$ because
$$\phi^{h(\mu)}_{\pi}(\widetilde T\,') = \phi^{h(\mu)}_{\pi}(D_r(\nu^*K_+)) = \widetilde T.$$

To prove (\ref{tauequalstwist}), consider
$$\tau_{-\pi}^2\co  \pi_2^{-1}(c_2+1/4) \into \pi_2^{-1}(c_2-1/4)$$
and focus on its action near one of the $k$ local models $(E_{loc}^2)^{\, j}$ 
(where it is supported). Thus we consider the restrictions
$$(\tau^2_{-\pi})^{\, j} = (\tau^2_{-\pi})|_{[(\pi^2_{loc})^{\, j}]^{-1}(c_2+1/4)}\co  [(\pi^2_{loc})^{\, j}]^{-1}(c_2+1/4) \into [(\pi^2_{loc})^{\, j}]^{-1}(c_2-1/4).$$
Now (\ref{transportB}) and (\ref{k(H)}) imply that
\begin{gather*}
(\Phi^2_{-1/4} \circ (\widehat\tau^2_{-\pi})^{\, j} \circ (\Phi^2_{1/4})^{-1})(u,v) = \sigma_{-\pi R_{1/4}'(|v|)}(u,v) = \phi_{-\pi}^{R_{1/4}(\mu)}(u,v).
\end{gather*}
(Here $(\widehat\tau^2_{-\pi})^{\, j}$ denotes the restriction away from the vanishing sphere.)
Expanding out the definitions of $\Phi^2_{\pm 1/4}$, and using 
$\tau_{-\pi}^2 = \alpha_2 \circ \tau_{-\pi}^0  \circ \alpha_2^{-1}$, we get
\begin{gather*}
(\sigma_{-\pi/2} \circ \rho_{1/4}^0 \circ m(i) \circ \alpha_2)  \circ \widehat \tau_{-\pi}^2 \circ (\rho_{1/4}^0 \circ \alpha_2)^{-1} \\
=\sigma_{-\pi/2} \circ \rho_{1/4}^0 \circ m(i) \circ \widehat \tau_{-\pi}^0 \circ (\rho_{1/4}^0)^{-1} = \phi_{-\pi}^{R_{1/4}(\mu)}.
\end{gather*}
Note that $\phi_{-\pi}^{R_{1/4}(\mu)}$ does not extend continuously over the zero section, but
composing on the left with $\sigma_{\pi/2}$ yields a map which does extend over the section, namely
$\phi^{h(\mu)}_{\pi} = \tau|_{D_r(T^*S_j)}$. Indeed:
\begin{gather}  \notag
\rho_{1/4}^0\circ m(i) \circ \tau_{\pi}^0 \circ (\rho_{1/4}^0)^{-1} \\ \label{fuzzynuts}
= \sigma_{\pi/2} \circ \phi_{-\pi}^{R_{1/4}(\mu)}\\ \notag
= \phi^{\mu/2}_{\pi} \circ \phi_{\pi}^{ - R_{1/4}(\mu)}  \\ \notag
 = \phi^{h(\mu)}_{\pi}
= \tau|_{\phi_{L_2^{\, j}}(D_r(T^*S_j))}, \notag
\end{gather}
where we recall $h(t) = t/2 - R_{1/4}(t)$. 
\\
\newline Now conjugate
$$(\tau^2_{-\pi})^{\, j} \co  [(\pi^2_{loc})^{\, j}]^{-1}(1/4) \into [(\pi^2_{loc})^{\, j}]^{-1}(-1/4)$$
to get
\begin{gather*}  
\nu^2_{-1/4} \circ  (\tau_{-\pi}^2)^{\, j} \circ (\rho^2_{1/4})^{-1} \co  D_r(T^*S^3) \into D_r(T^*S^3), 
 \end{gather*}
which is the restriction of 
$$\nu^2_{-1/4} \circ  \tau_{-\pi}^2 \circ (\rho_{1/4}^2)^{-1}\co  M_2 \into M $$ 
to $D_r(T^*S_j)\subset M_2$. By definition of $\nu^2_{-1/4}$ and $\rho^2_{1/4}$, and using (\ref{fuzzynuts}) for the last step, we have 
\begin{gather*}  
\nu^2_{-1/4}\circ  (\tau_{-\pi}^{2})^{\, j}  \circ (\rho_{1/4}^2)^{-1}\\
=  (\rho_{1/4}^0 \circ m(i) \circ \alpha_2)  \circ \tau_{-\pi}^2 \circ (\rho_{1/4}^0  \circ \alpha_2)^{-1}\\
= \rho_{1/4}^0 \circ m(i)  \circ \tau_{-\pi}^0 \circ (\rho_{1/4}^0)^{-1} 
= \tau|_{D_r(T^*S_j)}
\end{gather*}
This shows $\nu^2_{-1/4} \circ  \tau_{-\pi}^2 \circ (\rho^2_{1/4})^{-1}$ and $\tau$ agree on each neighborhood $D_r(T^*S_j)\subset M_2$. Since both maps equal the identity outside of
$\cup_{j=1}^{\, j =k} \, D_r(T^*S_j)$, this shows 
$$\nu^2_{-1/4} \circ  \tau_{-\pi}^2 \circ (\rho^2_{1/4})^{-1} = \tau\co  M_2 \into M.$$ \end{proof}
\begin{remark} Just as a sanity check, let's look at the other map $\tau^2_{\pi}$ (as opposed to  $\tau^2_{-\pi}$). 
In the last lemma we saw that
\begin{gather*}
 (\nu^2_{-1/4} \circ \tau_{-\pi}^{2} \circ (\rho^2_{1/4})^{-1})|_{(E_{loc}^2)^{\, j}}\\
= \sigma_{\pi/2} \circ \phi_{-\pi}^{R_{1/4}(\mu)}.
\end{gather*}
And the same calculation shows that
\begin{gather*}
 (\nu^2_{-1/4}\circ \tau_{\pi}^{2} \circ (\rho^2_{1/4})^{-1})|_{(E_{loc}^2)^{\, j}}\\
= \sigma_{\pi/2}\circ \phi_{\pi}^{R_{1/4}(\mu)}.
\end{gather*}
Then the total monodromy $\tau^2_{2\pi} = (\tau^2_{-\pi})^{-1} \circ \tau^2_\pi $
coresponds to 
$$ \phi_{\pi}^{ R_{1/4}(\mu)} \circ \sigma_{\pi/2}^{-1} \circ
\sigma_{\pi/2} \circ \phi_{\pi}^{ R_{1/4}(\mu)} = \phi_{2\pi}^{R_{1/4}(\mu)}$$ 
as expected.
\end{remark}

We now describe the vanishing spheres corresponding to $\gamma_0, \gamma_2, \gamma_4$.

\begin{lemma}\label{vanishingcycles} Under the canonical isomorphism 
$$\nu^2_{-1/4}\co  \pi^{-1}(b) \into M$$
the vanishing spheres $V_{\gamma_0}$,  $V_{\gamma_2}^{\, j}$, $V_{\gamma_4}$ correspond respectively to
$$L_0, L_2^{\, j}, L_4 \subset M.$$
 \end{lemma}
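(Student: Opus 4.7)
The plan is to handle the three vanishing spheres separately, combining the canonical identifications $\rho^i_s$, $\nu^2_{-s}$, the fiber-connect-sum identifications $\Psi_{02}, \Psi_{24}$, and Lemma \ref{halftwist}.

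The case $V_{\gamma_2}^{\, j}$ is essentially a local-model calculation, since $\gamma_2$ lies entirely in $(E^2_{loc})^{\, j}$. The vanishing sphere is $\Sigma^2_{-1/4} = \alpha_2^{-1}(\Sigma^0_{-1/4})$, and unpacking $\nu^2_{-1/4}|_{(E^2_{loc})^{\, j}} = \rho_{1/4}^0 \circ m(i) \circ \alpha_2$ one traces $\Sigma^0_{-1/4} = (i/2)\, S^3$ through $m(i)$ (landing on $(1/2)\, S^3 \subset q_0^{-1}(1/4)$) and then through $\rho_{1/4}^0$ (landing on the zero section of $D_r(T^*S^3)$), which inside $M$ is precisely $L_2^{\, j}$.

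For $V_{\gamma_0}$ I would split the path at the gluing site $c_0+1 \sim c_2-1$. The vanishing sphere at $c_0+1$ in $\pi_0$ is $(\rho^0_1)^{-1}(L_0)$ by the construction of $\pi_0$, and the identification $\Psi_{02} = (\nu^2_{-1})^{-1} \circ \rho^0_1$ carries this across the gluing to $(\nu^2_{-1})^{-1}(L_0) \subset \pi_2^{-1}(c_2-1)$. What remains is to prove that parallel transport in $E_2$ along the real segment $[c_2-1,\, c_2-1/4]$, conjugated by the canonical identifications $\nu^2_{-1}$ and $\nu^2_{-1/4}$, is the identity on $M$. For $V_{\gamma_4}$ decompose $\gamma_4$ as $\gamma_4^0$ followed by $\gamma_4^1$. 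The mirror of the previous case on the right of $c_2$ gives $V_{\gamma_4^1} \subset \pi_2^{-1}(c_2+1/4)$ corresponding to $L_4' \subset M_2$ under $\rho^2_{1/4}$, using the construction of $\pi_4$, the gluing $\Psi_{24} = (\rho^4_{-1})^{-1} \circ \rho^2_1$, and the analogous real-axis transport triviality on the right. The remaining piece $\tau_{\gamma_4^0}^{-1} = \tau^2_{-\pi}$ is precisely the setting of Lemma \ref{halftwist}: part (3) gives $\nu^2_{-1/4} \circ \tau^2_{-\pi} \circ (\rho^2_{1/4})^{-1} = \tau$, and part (2) gives $\tau(L_4') = L_4$, yielding $\nu^2_{-1/4}(V_{\gamma_4}) = L_4$.

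The main obstacle I anticipate is the real-axis transport triviality claim used in the second and third cases. Away from the local models this is immediate from the product structure of $E_2$. Inside each $(E^i_{loc})^{\, j}$ the symplectic form $\omega_i$ interpolates between $\omega_{\bbC^4}$ near $\Sigma^i$ and the pulled-back product form near the boundary of the fiber, and one must verify that this interpolation produces no radial holonomy after passing to the $\nu^2$ (or $\rho^i$) trivialization. The cleanest route is to combine Lemma \ref{radial}, which trivializes radial transport via $\Phi^i$, with the equality $\rho^i_s = \Phi^i_s$ on the positive real axis and the transport-compatibility of $\alpha_2$ and $m(i)$, which translates the negative real axis in $q_2$ to the positive real axis in $q_0$.
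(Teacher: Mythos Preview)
Your proposal is correct and follows essentially the same route as the paper: handle $V_{\gamma_2}^{\,j}$ by a direct local computation through $\nu^2_{-1/4}|_{(E^2_{loc})^{\,j}} = \rho^0_{1/4}\circ m(i)\circ\alpha_2$; handle $V_{\gamma_0}$ by decomposing the transport across the connect-sum site and using $\Psi_{02} = (\nu^2_{-1})^{-1}\circ\rho^0_1$; and handle $V_{\gamma_4}$ by the mirror decomposition on the right followed by Lemma~\ref{halftwist}(2),(3). The paper's proof is the same, except that where you isolate the ``real-axis transport triviality'' as a point needing justification, the paper simply asserts that the relevant transports ``do not really affect'' the vanishing spheres. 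Your suggested route through Lemma~\ref{radial} and the transport-compatibility of $\alpha_2$, $m(i)$ is the right one; note in addition that on the $E_0$ and $E_4$ sides the vanishing spheres sit at $k=0$, where $\omega_i = \omega_{\bbC^4}$ and the interpolation never enters, so there the claim is immediate.
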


\begin{proof} Let $V_0 \subset \pi_0^{-1}(c_0+1/4)$ be the vanishing sphere corrresponding to $[c_0, c_0+1/4]$. 
In \S \ref{pi0} we noted that 
$$\rho^0_{1/4}(V_0) = L_0.$$
The vanishing sphere $V_{\gamma_0}$ is 
$$V_{\gamma_0} = \tau_{[c_0 + 1/4, c_2-1/4]}(V_0).$$
At the end of \S \ref{pi} we mentioned that the transport map along $[c_0 + 1/4, c_2-1/4]$ is
$$\tau_{[c_0 + 1/4, c_2-1/4]} =  \tau^2_{[c_2 - 1, c_2-1/4]} \circ \Psi_{02}\circ  \tau^0_{[c_0 + 1/4, c_0+1]}.$$
Now $\tau^0_{[c_0 + 1/4, c_0+1]}$ does not really affect $V_0$ in the sense that 
$\tau^0_{[c_0 + 1/4, c_0+1]}(V_0)$ satisfies 
$$\rho^0_{1}(\tau^0_{[c_0 + 1/4, c_0+1]}(V_0)) = L_0.$$
Set 
$$V_2 = (\Psi_{02}\circ  \tau^0_{[c_0 + 1/4, c_0+1]})(V_0).$$
Then, since $\Psi_{02} =  (\nu^2_{-1})^{-1} \circ \rho^0_{1},$ we have
$$\nu^2_{-1}(V_2) =  \rho^0_{1}(\tau^0_{[c_0 + 1/4, c_0+1]}(V_0)) = L_0 \subset M.$$ 
Now, again, $\tau^2_{[c_2 - 1, c_2-1/4]}$ does not really affect $V_2$ in the sense that
$$\nu^2_{-1/4}(\tau^2_{[c_2 - 1, c_2-1/4]}(V_2)) = L_0.$$
But $\tau^2_{[c_2 - 1, c_2-1/4]}(V_2) = V_{\gamma_0}$, so that proves $\nu^2_{-1/4}(V_{\gamma_0}) = V_{\gamma_0}$.
\\
\newline In \S \ref{pi2} we saw that $V_{\gamma_2}^{\, j} \subset \pi_2^{-1}(c_2-1/4)$
satisfies
$$V_{\gamma_2}^{\, j} = \Sigma^2_{-1/4}= \alpha_2(\sqrt{-1/4}S^3) \subset [(\pi^2_{loc})^{\, j}]^{-1}(-1/4).$$
By (\ref{nu}),
$$\nu^2_{-1/4}|_{[(\pi^2_{loc})^{\, j}]^{-1}(-1/4)} = \rho_{1/4}^0 \circ m(i) \circ \alpha_2,$$
and so we have
$$\nu^2_{-1/4}(V_{\gamma_2}^{\, j}) = \rho_{1/4}^0(\sqrt{1/4}S^3) = L_2^{\, j} \subset M,$$
because $\sqrt{1/4}S^3 = \Sigma^0_{1/4} \subset (\pi^0_{loc})^{-1}(1/4)$.
\\
\newline To analyze $V_{\gamma_4}$, first consider
the vanishing sphere 
$$V_4 \subset \pi^{-1}(c_4-1/4) = \pi_4^{-1}(c_4-1/4)$$
corresponding to the path $[c_4-1/4, c_4]$. In \S \ref{pi4} we saw that
$$\rho_{-1/4}^4(V_4) = (L_4)' \subset M_2.$$
Now 
$$V_{\gamma_4^1} = \tau_{[c_2+1/4, c_4-1/4]}^{-1}(V_4) \subset \pi_2^{-1}(c_4+1/4).$$ 
Using 
$$\tau_{[c_2+1/4, c_4-1/4]}^{-1} = (\tau^2_{[c_2+1/4, c_2+1]})^{-1}\circ \Psi_{24}^{-1} \circ 
(\tau^4_{[c_4-1, c_4-1/4]})^{-1}$$ 
and arguing as we did for $V_{\gamma_0}$ one sees that
$V_{\gamma_4^1}$ satisfies
$$\rho^2_{1/4}(V_{\gamma_4^1}) = L_4' \subset M_2.$$
Then 
$$V_{\gamma_4} = \tau_{\gamma_4^0}^{-1}(V_{\gamma_4^1}) = \tau^2_{-\pi}(V_{\gamma_4^1})$$
Therefore, using  lemma \ref{halftwist}, we get
$$\nu^2_{-1/4}(V_{\gamma_4}) = (\nu^2_{-1/4} \circ \tau^2_{-\pi} \circ (\rho^2_{1/4})^{-1})(L_4') = \tau(L_4') = L_4 \subset M.$$
\end{proof}

\section{Construction of $N \subset E$}\label{ConstructionN}
In this section we construct an exact Lagrangian embedding $N \subset E$ and prove Theorem $A$ (see Theorem \ref{main}). We also discuss some ways of refining Theorem $A$ in remark \ref{refinements}, and we give a detailed sketch of the proof that $E$ is homotopy equivalent to $N$ in Proposition \ref{Prophomotopyequiv}.
\\
\newline 
We first construct a Lagrangian submanifold $\widetilde N \subset E$ and then check that $\widetilde N$ is diffeomorphic to $N$. $\widetilde N \subset E$ will be defined  as the union of several Lagrangian  manifolds, say $N_i$, with boundary (sometimes with corners). This decomposition of $\widetilde N$ is essentially the same as the handle-type decomposition which appears in \cite[pages 27-32]{Milnor}. The fact that the union of the $N_i$ is diffeomorphic to $N$ is essentially Theorem 3.13 there. 
\\
\newline Let
$$N_0 = \Delta_{[c_0, c_2-1/10]},$$
$$N_4 = \Delta_{[c_2+1/10, c_4]}.$$
These are the Lefschetz thimbles over the indicated intervals. They correspond to the $0-$ and $4-$ handles of $N$. Let
$$f_2= q_2|_{E_{loc}^2 \cap \bbR^4} \co  E_{loc}^2 \cap \bbR^4 \into \bbR,$$
and let 
$$N_2^{\, j} = ((E_{loc}^2)^{\, j} \cap \bbR^4)  \cap f_2^{-1}([c_2-1/2, c_2+1/2]).$$
This corresponds to the $j$th $2-$handle of $N$. 
\\
\newline Recall from \S \ref{pi2} that $E_2$ is a certain quotient  of the disjoint union 
$$[M_2 \setminus (\cup_{j=1}^{\, j =k}\phi_{(L_2^{\, j})'}(D_{r/2}(T^*S^3)))] \times D^2(c_2) \sqcup \big( \sqcup_{j=1}^{\, j =k} (E_{loc}^2)^{\, j}) \big).$$
Recall $L_4' \subset M_2$ is defined as
$$L_4' = [L_0 \setminus(\cup_{j=1}^{\, j =k} \phi_j(S^1 \times D^2_{r/2}) )] \cup \phi_{(L_2^{\, j})'}(D_{r}(\nu^*K_+)),$$
where $\phi_{(L_2^{\, j})'}$ identifies $D_{(r/2,r]}(\nu^*K_+)$ with  $\phi_j(S^1 \times D^2_{(r/2,r]})$.
Define $N_2^{triv} \subset E_2$ by 
\begin{eqnarray*}
 N_2^{triv} =[L_0 \setminus(\cup_{j=1}^{\, j =k} \phi_j(S^1 \times D^2_{r/2}) )] \times [c_2-1/2, c_2 +1/2] \\
 \subset [M_2 \setminus (\cup_{j=1}^{\, j =k}\phi_{(L_2^{\, j})'}(D_{r/2}(T^*S^3)))] \times D^2_{1/2}(c_2). 
\end{eqnarray*}
Then, $\widetilde N \subset E$ is defined to be the union
$$\widetilde N = N_0 \cup (\cup_j \,N_2^{\, j}) \cup N_2^{triv} \cup N_4.$$ 
See figure \ref{handledec} in \S \ref{intro} for the 2-dimensional version of this. (Note that the pieces over-lap.)

\begin{thm}\label{main}
\begin{enumerate}
\item \label{tildeN} $\widetilde N \subset E$ is a smooth closed exact Lagrangian submanifold. 
\item There is a diffeomorphism $\alpha \co  N \into \widetilde  N$.\label{alpha}
\item $\pi(\widetilde N) = [c_0, c_4]$. \label{[c_0,c_4]}
\item All critical points of $\pi$ lie on $\widetilde N$, and in fact $Crit(\pi) = \alpha(Crit(f))$. \label{crit}
\item There is a diffeomorphism $\beta\co  \bbR \into \bbR$ such that 
$$\beta \circ  \pi|_{\widetilde N}  \circ \alpha = f\co  N \into \bbR.$$ \label{pi=f}
\end{enumerate}
\end{thm}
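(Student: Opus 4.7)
The plan is to verify the five claims in the order listed. The overall strategy is piece-wise: establish that each of $N_0$, $N_4$, $N_2^j$ and $N_2^{triv}$ is exact Lagrangian (with corners) in $E$; verify that the four types of pieces agree smoothly on their overlaps to form a single smooth embedded Lagrangian $\widetilde N$; and finally use the Milnor-style handle decomposition of $N$ from \cite[pp.\,27-32]{Milnor} to produce $\alpha$ and deduce (\ref{[c_0,c_4]})--(\ref{pi=f}).

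For the Lagrangian property of the individual pieces, $N_0$ and $N_4$ are Lefschetz thimbles over real closed segments, hence exact Lagrangian with corners by the standard theory (\cite[\S 16]{S08}). Each $N_2^j$ lies in the real slice $\bbR^4 \cap (E_{loc}^2)^j$: since $\omega_2$ agrees with $\omega_{\bbC^4}$ near the critical point and (via $\Phi^2$) with the pull-back of $\omega_{T^*S^3}$ on the outer region, and $\bbR^4$ is Lagrangian under both, $N_2^j$ is exact Lagrangian with corners. The piece $N_2^{triv}$ is a product of a subset of $L_0 \subset M_2$ with a real interval in $D^2_{1/2}(c_2)$; in the flat trivial region over this interval the product is Lagrangian for the split symplectic form.

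The main technical obstacle is verifying smooth gluing on the overlap regions, which sit over $\pi^{-1}([c_2 - 1/2, c_2 - 1/10])$ on the left and $\pi^{-1}([c_2 + 1/10, c_2 + 1/2])$ on the right. I will compute, in each fiber $\pi^{-1}(c_2 - s)$ with $s \in [1/10, 1/2]$, the slice of each piece. By Lemma \ref{vanishingcycles}, under the canonical identification $\nu^2_{-s}$ the slice of $N_0$ is $L_0 \subset M$, while the slice of $N_2^{triv}$ is tautologically $L_0 \setminus \bigcup_j \phi_j(S^1 \times D^2_{r/2})$. For $N_2^j$, the formula $\nu^2_{-s}|_{(\pi^2_{loc})^j} = \rho_s^0 \circ m(i) \circ \alpha_2$ identifies its slice with the image of the real set $\{x \in \bbR^4 : q_2(x) = -s,\ k^2(x) \leq 4r^2\}$. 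Unpacking this in coordinates using the splitting $T^*\bbR^4|_{K_-} \cong T^*K_- \oplus \nu^*K_-$, the slice maps into $\phi_{L_2^j}(D_r(\nu^*K_-)) = \phi_j(S^1 \times D^2_r) \subset L_0$, and fills out $\phi_j(S^1 \times D^2_{r/2})$ once the constants $1/10$ and $1/2$ are chosen compatibly with $r$. Hence the three slices reassemble to $L_0$, matching the slice of $N_0$. Smoothness in the fiber direction follows from real analyticity of each slice, and in the base direction from the flatness of $\pi$ in this region. The right overlap is treated by the analogous argument using an identification at positive $s$ together with the relation $\tau(L_4') = L_4$ from Lemma \ref{halftwist}, which matches the slice of $N_4$ with the union of $N_2^{triv}$ and the $N_2^j$ extended via parallel transport across $c_2$. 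Global exactness of $\widetilde N$ then follows because each piece has a primitive, the overlaps are connected, and the real first cohomology of $\widetilde N \cong N$ vanishes (since $N$ admits a handle decomposition with no $1$-handles).

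For the remaining claims, the diffeomorphism $\alpha \colon N \to \widetilde N$ is built by matching the Milnor decomposition of $N$ piece-by-piece: the $0$-handle to $N_0$, the top $4$-handle to $N_4$, each index-$2$ handle to $N_2^j$, and the ``trivial'' filler to $N_2^{triv}$. The framings match because the Weinstein embedding was chosen (\S \ref{4mfds}) to satisfy $\phi_{L_2^j}|_{D_r(\nu^*K_-)} = \phi_j$, which is the same framing data used for attaching the $2$-handles of $N$. Claim (\ref{[c_0,c_4]}) is immediate from the definitions of the pieces. For (\ref{crit}), the critical points of $\pi$ sit at the origins of $E_{loc}^0$, $(E_{loc}^2)^j$, and $E_{loc}^4$, which are real points lying on $\widetilde N$ and correspond under $\alpha$ to $x_0$, $x_2^j$, $x_4$ respectively. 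Finally (\ref{pi=f}) holds because $\pi \circ \alpha$ is Morse on $N$ with the same critical points and Morse indices as $f$ (the index of $f_2 = q_2|_{\bbR^4}$ at $0$ is $2$, matching $x_2^j$, and analogously at $x_0, x_4$), from which $\beta$ is built by piecewise-linear interpolation matching the critical values $c_0, c_2, c_4$ with $0, n, 2n$.
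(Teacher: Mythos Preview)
Your overall strategy matches the paper's, but there are two concrete gaps.

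First, you misidentify the overlap structure. The overlaps $N_0\cap\bigl(N_2^{triv}\cup\bigcup_j N_2^j\bigr)$ and $N_4\cap\bigl(N_2^{triv}\cup\bigcup_j N_2^j\bigr)$ do sit over $[c_2-1/2,c_2-1/10]$ and $[c_2+1/10,c_2+1/2]$, but the overlap $N_2^j\cap N_2^{triv}$ sits over the \emph{entire} interval $[c_2-1/2,c_2+1/2]$. Your slice argument via $\nu^2_{-s}$ only makes sense for $s>0$ and so says nothing about smoothness of $N_2^j\cup N_2^{triv}$ over $[c_2-1/10,c_2+1/2]$. Moreover, the justification ``flatness of $\pi$ in this region'' is false: $\pi$ is not flat on $D_{1/2}(c_2)$ (there is a critical point at $c_2$). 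The paper handles this by a single direct computation with the trivialization $\Phi^2$ on the collar $(k^2)^{-1}([4(r/2)^2,4r^2])$, showing that $\Phi^2$ carries $N_2^j\cap(k^2)^{-1}([4(r/2)^2,4r^2])$ onto the product $D_{[r/2,r]}(\nu^*K_+)\times[c_2-1/2,c_2+1/2]$; this gives smoothness of $N_2=N_2^{triv}\cup\bigcup_j N_2^j$ over the whole interval at once, and then $N_0,N_4$ are glued on separately using the identifications $\nu^2_{-s},\rho^0_s$ (both landing on $L_0$) and $\rho^2_s,\rho^4_{-s}$ (both landing on $L_4'$). Your detour through $\tau(L_4')=L_4$ on the right is unnecessary: work in $M_2$, not $M$.

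Second, your argument for (\ref{pi=f}) is insufficient. Two Morse functions with the same critical points and indices need not be equivalent via diffeomorphisms of source and target; the conclusion requires the handle-by-handle comparison that you set up for $\alpha$ but do not actually use. The paper compares $\pi|_{\widetilde N}$ with $f$ on each piece of the Milnor decomposition, noting that on the overlap regions both are literally projection to the interval; this is what produces $\beta$. (Incidentally, your appeal to $H^1(N)=0$ for exactness is unnecessary: the overlap graph of the pieces is a tree, so adjusting primitives by constants along connected overlaps already yields a global primitive.)
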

\begin{proof} First we prove that $\widetilde N$ is a smooth manifold diffeomorphic to $N$. Define
$$N_2 = (\cup_{j=1}^{\, j =k} N_2^{\, j}) \cup N_2^{triv}.$$
The gluing map in the definition of $E_2$ is the composite of   
$$\Phi^2\co  (E_{loc}^2)^{\, j} \cap (k^2)^{-1}([4(r/2)^2,4r^2]) \into D_{[r/2,r]}(T^*S^3)\times D^2(c_2)$$
and
$$\phi_{(L_2^{\, j})'} \co  D_{[r/2,r]}(T^*S^3) \into M_2.$$
A direct calculation shows that $\Phi^2$ 
satisfies 
$$(\Phi^2)( N_2^{\, j} \cap (k^2)^{-1}([4(r/2)^2,4r^2]) = D_{[r/2,r]}(\nu^*K_+) \times [c_2-1/2,c_2+1/2].$$
Indeed, if $z = (x_1, x_2, x_3, x_4)+i(0,0,0,0) \in \bbR^4 \subset \bbC^4$ then 
$$\Phi_2(z) = \Phi((x_1, x_2,0,0)+(0,0, ix_3, ix_4)).$$
Let 
$$s = q_2((x_1, x_2,0,0)+(0,0, ix_3, ix_4)) \in [-1/4,1/4].$$
If $s < 0$, then 
\begin{gather*}
\Phi_2(z) = (\sigma_{-\pi/2} \circ \rho_s \circ m(i))((x_1, x_2,0,0)+(0,0, ix_3, ix_4)) \\
= \sigma_{-\pi/2} \circ \rho_s((0,0, -x_3, -x_4)+(ix_1, ix_2,0,0)) \\
= \sigma_{-\pi/2}((0,u),(v,0)) \text{ for some } u,v \in \bbR^2\\
= ((u',0),(0,v') \text{ for some } u',v' \in \bbR^2 
\end{gather*}
Similarly, if $s \geq 0$ then $\Phi_2(z) = \rho_s(\alpha_2(z))$ and we get the same conclusion.
\\
\newline 
We recalled earlier in this section that $\phi_{(L_2^{\, j})'}$ identifies 
$D_{[r/2,r]}(\nu^*K_+)$ with 
$$\phi_j(S^1 \times D_{(r/2, r]}^2) \subset L_0.$$
This shows that in the quotient space $E_2$  
$$N_2^{\, j} \cap (k^2)^{-1}((4(r/2)^2,4r^2]) \subset N_2^{\, j}$$ is identified with 
$$\phi_j(S^1 \times D_{(r/2, r]}^2) \times [c_2-1/2, c_2+1/2] \subset N_2^{triv}.$$
Therefore the union $N_2$ is smooth. In  fact,  
$N_2$ is diffeomorphic to $f^{-1}([1,3]) \subset N$, with $\pi|_{N_2}$ equivalent to $f|_{f^{-1}([1,3])}$. 
This essentially follows from the fact that we are using the correct framing $\phi_j$.
(For more details the reader can consult Milnor, \cite[pages 27-32]{Milnor} and especially Theorem 3.13.
See remark \ref{Milnorcomparison} below where we point out some small differences between what we have done here 
and Milnor's set up.)
\\
\newline Next recall that $E_2$ is fiber connect-summed to $E_4$ using the natural identifications
$$\rho_s^2\co  \pi_2^{-1}(s) \into M_2, s >0$$
and 
$$\rho_{-s}^4\co  \pi_4^{-1}(-s) \into M_2, -s<0.$$
For any $s>0$ we have
$$\rho_{-s}^4(N_4 \cap  \pi_4^{-1}(-s) ) = L_4'$$
and 
$$\rho_{s}^2(N_2 \cap  \pi_2^{-1}(s)) = L_4'.$$
Therefore 
$N_2$ and $N_4$ glue together smoothly over the interval $[c_2-1/10, c_2+1/2]$
to form a manifold diffeomorphic to $f^{-1}([2-s,4])$, where $s>0$ is small.
\\
\newline Similarly, recall that $E_2$ is fiber connect-summed to $E_4$ using the natural identifications
$$\nu^2_{-s} \co  \pi_2^{-1}(-s) \into M$$
and 
$$\rho^0_s \co  \pi_0^{-q}(s) \into M.$$
For any $s>0$ we have 
$$\rho_{s}^0(N_0 \cap  \pi_0^{-1}(s) ) = L_0$$
and 
$$\nu_{-s}^2(N_2 \cap  \pi_2^{-1}(-s)) = L_0.$$
Therefore 
$N_2 \cup N_4$ and $N_0$ glue together smoothly over the interval $[c_2-1/10, c_2+1/2]$
to form a manifold diffeomorphic to $N$.
\\
\newline $N_2$ is exact Lagrangian since $N_2^{triv}$ and $N_2^{\, j}$ are, and each overlap region is connected.
Similarly $N_0, N_4$ are exact Lagrangian and overlap with $N_2$ in connected regions, hence $\widetilde N$ is exact Lagrangian. 
\\
\newline The fact that $\pi(N) = [c_0, c_4]$ and the critical points of $\pi$ correspond to those of $f$ is obvious by construction of $N$. The fact that $\pi|_{N}$ is equivalent to $f\co  N \into \bbR$
follows by comparing $\pi$ to $f$ on each handle of the respective handle-type decompositions
(the handle decomposition of $f$ being the one in \cite{Milnor});
on the over-laps between the handles both $\pi$ and $f$ are both just projection to the interval.
\end{proof}
\begin{remark}\label{Milnorcomparison}
Milnor's approach in  \cite[pages 27-32]{Milnor} can be summarized as follows. Milnor identifies $f_2^{-1}([-1,1]) \cap (k^2)^{-1}([4(r/2)^2,4r^2])$  with $S^1 \times D^2_{[r/2,r]}$ in two steps: First he identifies 
$$S^1 \times D^2_{[\varphi(r/2),\varphi(r)]} \, \text{ with } \, f_2^{-1}(-1) \cap (k^2)^{-1}([4(r/2)^2,4r^2])$$  
using a map $\eta$ which involves $\sinh$ and $\cosh$ (Here $\varphi\co  [0,\infty) \into [0,\infty)$ is a certain diffeomorphism whose formula is not important, but see the end of this remark for that.) Second, he uses gradient flow to go the other fibers. 
\\
\newline We, on the other hand, identify 
$$f_2^{-1}([c_2-1/2,c_2+ 1/2]) \cap (k^2)^{-1}([4(r/2)^2,4r^2]) \,\text{ with } \, D_{[r/2,r]}(\nu^*K_+) \cong S^1 \times D^2_{[r/2,r]}$$ 
in just one step using $\Phi^2$. 
To compare our approach to Milnor's we express $\Phi^2$ in two steps as follows. Recall that 
lemma \ref{radial} says $\Phi^2 = \rho_0 \circ \widetilde \Phi^2$, where $\widetilde \Phi_2$ is radial symplectic flow 
to $\pi_2^{-1}(0) \setminus \{0\}$. This implies that $\Phi^2$ can be expressed in two steps as symplectic flow to $\pi_2^{-1}(-1/2)$, followed by $\Phi_{-1/2}^2$. Lemma \ref{realtransport} below shows that symplectic flow along the real part and gradient flow agree up to reparameterization, so that implies that the first step of Milnor's approach agrees with our first step (up to isotopy). As for comparing $\Phi_{-1/2}$ and $\eta$, recall that 
for each $x \in f_2^{-1}(c_2-1/2) \cap (k^2)^{-1}([4(r/2)^2,4r^2])$, we have 
$$\Phi_2(x) = (u,0)+i(0, \lambda v)$$ for some 
$$(u,\lambda v) \in S^1 \times D^2_{[r/2,r]}$$ 
Then since  
$$\eta(u,\theta v) = (\cosh \theta u, \sinh \theta v)$$
and 
$$k_2((\cosh \theta u,0) + (0,\sinh \theta v)) = \sinh^2\theta + 2\sinh \theta$$
it follows that $\Phi^2_{-1/3}$ and $\eta$  differ only by a radial diffeomorphism
$$\theta \mapsto \lambda =  \sinh^2\theta + 2\sinh \theta.$$
(Above, $\varphi$ is the inverse of this map.)
\end{remark}

Here is the precise statement and proof of the claim in the last remark concerning 
symplectic transport on the real part.

\begin{lemma} \label{realtransport}
Let $\pi\co  E \into \bbC$ be a symplectic Lefschetz fibration, where $E$ is equipped with
symplectic structure $\omega$, such that the regular fibers of $\pi$ are symplectic, and 
$J$ is an almost complex structure on $E$ compatible with $\omega$ such that $\pi_*(Jv) = i J\pi_*(v)$. Suppose $E$ has an anti-symplectic, anti-complex involution
$$\iota\co  E \into E,$$
that is,
$$\iota^2 = id$$
$$\iota^*\omega = -\omega$$
$$\iota^*J = -J,$$
$$\pi(\iota(p)) = \overline{\pi(p)}.$$
Then the real part $Y = E^\iota$ is such that 
for each $x \in Y \setminus Crit(\pi)$
the symplectic lift $\xi \in T_x(E)$ of $\partial_t \in T_{\pi(x)}(\bbR)$
(i.e. $(D\pi)(\xi) = \partial_t$ and $\omega(\xi, v) =0$ for all $v  \in Ker(D\pi)$)
satisfies 
$$\xi \in T(Y),$$
and in fact 
$$ \xi = \nabla_{g}f/ | \nabla_{g}f|,$$
where 
$\pi = F+iH$, $f = \pi|_{Y} = F|_{Y}$, and $g = \omega_J$ (i.e. $g(v,w) = \omega(v,Jw)$).
This means the symplectic transport preserves $Y$, and coincides with the unit speed gradient flow of $f= F|_{Y}$. (Of course, symplectic transport only makes sense on $Y \setminus Crit(\pi)$.)
\end{lemma}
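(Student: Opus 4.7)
The plan is to use the anti-symplectic involution $\iota$ to show that the horizontal lift $\xi$ is $\iota$-invariant at points of $Y$ (which forces $\xi \in TY$), and then to identify $\xi$ with a positive multiple of $\nabla_g f$ via a horizontal/vertical decomposition that uses $Y$ being Lagrangian. The final geometric statement about symplectic transport follows at once.

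First, I would establish $\iota$-equivariance of the symplectic connection. Differentiating $\pi \circ \iota = \overline{\,\cdot\,} \circ \pi$ yields $D\pi \circ \iota_* = \overline{\,\cdot\,} \circ D\pi$, so $\iota$ sends fibers to fibers and in particular $\iota_*(\ker D\pi_x) = \ker D\pi_{\iota(x)}$. For any $v' \in \ker D\pi_{\iota(x)}$, writing $v' = \iota_* v$ and using $\iota^*\omega = -\omega$,
\[
\omega(\iota_* \xi, v') = -\omega(\xi, v) = 0.
\]
Combined with $D\pi(\iota_* \xi) = \overline{D\pi(\xi)} = \overline{\partial_t} = \partial_t$, this shows $\iota_* \xi$ is the symplectic lift of $\partial_t$ at $\iota(x)$. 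For $x \in Y \setminus \mathrm{Crit}(\pi)$ we have $\iota(x) = x$, and uniqueness of the symplectic lift at a regular point of $\pi$ gives $\iota_* \xi = \xi$; hence $\xi \in T_xY$.

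Next I would identify $\xi$ with $|\xi|_g^2\, \nabla_g f$. The hypothesis $\iota^*J = -J$ together with $\iota_*|_{TY} = \mathrm{Id}$ shows $J(T_xY) \subset (T_xY)^{\perp_g}$; by dimension count $Y$ is Lagrangian and $g|_Y$ is a Riemannian metric. Since $J$ preserves the fibers (from $D\pi \circ J = i\cdot D\pi$ one gets $J(\ker D\pi) \subset \ker D\pi$), for any $v \in \ker D\pi$ we have $g(\xi,v) = \omega(\xi, Jv) = 0$ by horizontality of $\xi$; using $g = \omega(\cdot, J\cdot)$ and symmetry of $g$ this gives $\omega(J\xi, v) = -\omega(v, J\xi) = -g(v,\xi) = 0$, so $J\xi$ is itself horizontal and equals the symplectic lift $\eta$ of $\partial_s := i\partial_t$. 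For $w \in T_xY$ we have $D\pi(Jw) = i\, D\pi(w) = dF(w)\, \partial_s$, so $Jw = dF(w)\, J\xi + v_0$ with $v_0 \in \ker D\pi$, and
\[
g(\xi,w) \;=\; \omega(\xi, Jw) \;=\; dF(w)\,\omega(\xi, J\xi) \;=\; |\xi|_g^2\, df(w).
\]
This proves $\xi = |\xi|_g^2\, \nabla_g f$ in $T_xY$; taking norms gives $|\nabla_g f|_g = 1/|\xi|_g$, so $\xi/|\xi|_g = \nabla_g f/|\nabla_g f|_g$. Thus the symplectic lift points along the gradient, and its flow sweeps out unit-speed gradient trajectories of $f$ (up to reparameterization).

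The main technical point I expect to be mildly delicate is the first step: correctly verifying that $\iota$ preserves $\ker D\pi$ at real points by differentiating the commutation with complex conjugation, and then invoking uniqueness of the symplectic lift on $E \setminus \mathrm{Crit}(\pi)$. Once that is in place, the remainder is a straightforward horizontal/vertical calculation using the Kähler-type identity $g = \omega(\cdot, J\cdot)$ and $J$-invariance of the fibers.
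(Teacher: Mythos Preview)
Your first step---showing $\iota_*\xi=\xi$ at points of $Y$ by verifying that $\iota_*\xi$ satisfies the two characterizing properties of the symplectic lift---is exactly the paper's argument. For the gradient identification the paper takes a different route: writing $\pi=F+iH$, it observes (from $\pi_* J = i\pi_*$) that $\nabla_g F=\pm X_H$ and $\nabla_g H=\pm X_F$, notes that the horizontal space is $\mathrm{span}\{X_H,X_F\}$ (since the fiber is $F^{-1}(a)\cap H^{-1}(b)$), and checks $DH(\nabla F)=DH(\pm X_H)=0$, so that $\nabla_g F$ is horizontal and projects to a multiple of $\partial_t$; then it restricts to $Y$. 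Your approach instead exploits $J$-invariance of $\ker D\pi$ to show $J\xi$ is itself horizontal (the lift of $\partial_s$), and then computes $g(\xi,w)$ for $w\in T_xY$ by decomposing $Jw$ horizontally and vertically. Both routes are short; the paper's Hamiltonian-vector-field argument is perhaps more structural, while yours is a clean pointwise linear-algebra computation that never names $X_F,X_H$. Note that both arguments in fact only establish that $\xi$ is \emph{parallel} to $\nabla_g f$: you correctly obtain $\xi=|\xi|_g^2\,\nabla_g f$, and the paper's line ``$D\pi(\nabla F/|\nabla F|)=1\cdot\partial_t$'' contains the same normalization slip (one actually gets $|\nabla F|\cdot\partial_t$). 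Your ``up to reparameterization'' is the honest conclusion.
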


\begin{proof}  Let $p \in Y = E^\iota$. We will show that $\xi_p \in T(Y)$.
First note that $T_p(Y) = T_p(E)^{\iota_*}$. $\xi$ is characterized by:
$D\pi(\xi) =\partial_t$ and $\xi \in Ker( D\pi)^\omega$. Let us check that 
$\iota_*(\xi)$ also satisfies these. First, 
$$D \pi(\iota_*\xi) = (\iota_\bbC)_*(D\pi(\xi)) = (\iota_\bbC)_*(\partial_t) = \partial_t.$$
Next, let $v \in Ker(D\pi)$. Notice that $\iota_*(v) \in Ker(D\pi)$ also, by a similar calculation as above. Thus
$$\omega(\iota_* \xi, \iota_* v) = -\omega(\xi,v) =0$$
shows that $\iota_* \xi \in Ker(D\pi)^\omega$.
\\
\newline Now let's show that $\xi = \nabla f /|\nabla f|$, where $f = F|_{Y}$. 
First we show $\xi = \nabla F /|\nabla F|$.
Using $\pi_*(JV) = i\pi_*(v)$, it is easy to check that
$\nabla F =  \pm X_H$ and $\nabla H = \pm X_F$. Now observe that
$$D(\pi)^\omega = span\{ X_H, X_F\};$$
this follows at once from $\pi^{-1}(z) = F^{-1}(a) \cap (H)^{-1}(b)$,
where $z = a+bi$, and dimension considerations.
From this it follows immediately that $\nabla F /|\nabla F| \in D(\pi)^\omega$.
Furthermore 
$$D\pi(\nabla F /|\nabla F|) = DF(\nabla F /|\nabla F|) = 1 \cdot \partial_t$$
because $DH(\nabla F) = DH(\pm X_H) = 0$. Thus we have shown
$$\xi = \nabla F /|\nabla F|.$$
Now, since we already checked that $\xi \in T(Y)$, it follows that 
in fact $\xi = \nabla f /|\nabla f|$, where $f = F|_{Y}$. \end{proof}

As we explained in remark \ref{pivsfC} in the introduction, we expect to prove in a future paper that $E$ is conformally exact symplectomorphic to $D(T^*N)$. For now,  we give a detailed sketch of the proof of the following proposition, which states that $E$ is at least homotopy equivalent to $N$. 

\begin{prop} $E$ is homotopy equivalent to $N$.\label{Prophomotopyequiv}
\end{prop}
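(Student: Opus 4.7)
My plan is to combine the standard homotopy-theoretic description of a Lefschetz fibration with the explicit construction of $M$ and the vanishing spheres from Section~\ref{fiber}, and then match the resulting cell structure on $E$ with the handle decomposition of $N$ induced by $f$. The first step is to invoke the standard fact (see \cite[Ch.~1]{LES}, \cite[\S 16]{S08}) that a symplectic Lefschetz fibration over a disk deformation retracts, via parallel transport along a system of non-crossing vanishing paths, onto the union of the regular fiber and the Lefschetz thimbles. Applied to the paths $\gamma_0, \gamma_2^j, \gamma_4$ from Section~\ref{computingVC}, and using the computation of their vanishing spheres in Lemma~\ref{vanishingcycles}, this gives
\[
E \simeq M \cup \Delta_{\gamma_0} \cup \bigcup_j \Delta_{\gamma_2^j} \cup \Delta_{\gamma_4},
\]
where each thimble is a closed disk of real dimension $\dim N$, attached to $M$ along the corresponding vanishing sphere $L_0$, $L_2^j$, or $L_4$.

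Second, using the construction of $M$ in Section~\ref{fiber} (or equivalently the plumbing description of Section~\ref{plumbing}), I would show that $M$ itself deformation retracts onto the union $L_0 \cup \bigcup_j L_2^j$ of the ``interior'' vanishing spheres, where $L_0$ and $L_2^j$ meet along $K_j$. Indeed, $D(T^*L_0)$ retracts onto its zero section $L_0$, each Morse-Bott handle $H_j$ retracts onto its core (a copy of $S^1 \times D^2$ in the four-dimensional case), and the union of this core with the conormal slice $D(\nu^*K_j) \subset L_0$ is precisely $L_2^j$ by construction.

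Third, I would identify the resulting CW structure on $E$ with the CW structure on $N$ coming from the Morse-induced handle decomposition. Attaching $\Delta_{\gamma_0}$ along $L_0$ realizes the $0$-handle of $N$; for each $j$, the sphere $L_2^j$ splits as $D(\nu^*K_j) \cup (S^1 \times D^2)$, and once $L_0$ is filled the first half becomes null-homotopic, so attaching $\Delta_{\gamma_2^j}$ along $L_2^j$ becomes homotopy equivalent to attaching a middle-index handle of $N$ along $K_j$ with framing $\phi_j$; and $\Delta_{\gamma_4}$ attached along $L_4$ realizes the top-dimensional handle. The matching of attaching data is built into the construction, since the Morse-Bott handles $H_j$ were attached along $S(\nu^*K_j)$ using precisely the framings $\phi_j$ that describe the middle-index handles of $N$.

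The main obstacle will be this cellular matching in the third step: verifying carefully that filling one half of the Heegaard-type sphere $L_2^j$ and attaching a thimble along it yields, up to homotopy, a middle-index handle of $N$ with the correct framing. This is essentially a dual-handle computation and follows from unraveling the definitions, all of which were arranged so that framings match. Alternatively, and perhaps more geometrically, one could use the explicit Lagrangian embedding $\widetilde N \subset E$ from Theorem~\ref{main} together with a Weinstein neighborhood of $\widetilde N$ and the Liouville vector field on $E$ to construct a direct deformation retraction of $E$ onto $\widetilde N$.
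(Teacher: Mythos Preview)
Your proposal is correct and follows essentially the same route as the paper's proof: both begin with the standard retraction of $E$ onto $M$ with one disk attached along each vanishing sphere, then simplify $M$, and finally match the resulting cell complex with the handle decomposition of $N$ induced by $f$.

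The one substantive difference is in how the ``main obstacle'' you flag is handled. You retract $M$ all the way down to the wedge-like union $L_0 \cup \bigcup_j L_2^{\,j}$ and then argue homotopically that, once $L_0$ is filled by $\Delta_{\gamma_0}$, attaching a disk along $L_2^{\,j}$ amounts to attaching a $2$-cell along $K_j$ with framing $\phi_j$. The paper instead keeps the slightly thicker plumbing model $M_0 = D_R(T^*L_0) \cup \bigcup_j D_r(T^*L_2^{\,j})$ and, inside each $D_r(T^*L_2^{\,j}) \cup \Delta_2^{\,j}$, constructs an explicit subset $H_2^{\,j} \cong D^2 \times D^2$ (from $\Delta_2^{\,j}$, the region under the graph of the $1$-form defining $L_4$, and a piece of the conormal disk) which is a deformation retract and visibly a $2$-handle attached via $\phi_j$. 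This concrete $H_2^{\,j}$ makes two things transparent that in your sketch still require work: the framing check, and the fact that the free boundary of $\Delta_0 \cup \bigcup_j H_2^{\,j}$ is exactly $L_4$, so that $\Delta_4$ caps it off as the top handle. In your version you would still need to say where $L_4$ lands under the retraction of $M$ onto $L_0 \cup \bigcup_j L_2^{\,j}$, which is precisely what the explicit $H_2^{\,j}$ buys.
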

\begin{proof}[Sketch of proof] First it is well-known that $E$ is homotopy equivalent to the result of attaching one 
4-disk to $M$ at each vanishing sphere. (In fact, $E$ is diffeomorphic to 
the result of attaching one 4-handle to $M\times D^2$ at each vanishing sphere.
See \cite{GS} for the corresponding statement when $\dim E = 4$.) Let us denote the disks $\Delta_0, \Delta_2^{\, j}, \Delta_4$, where $\partial \Delta_0$ is attached to $L_0$ and so on.
\\
\newline Recall from \S \ref{4mfds} that there are exact Weinstein embeddings 
$$\phi_{L_2^{\, j}}\co  D_r(T^*S_j) \into M, \, S_j = S^3, \, j=1,\ldots, k$$
such that 
\begin{gather} \label{fr}
\phi_{L_2^{\, j}}|_{D_r(\nu^*K_-)} = \phi_j\co  S^1 \times D^2_r\into L_0,
\end{gather}
where we use the canonical identification $D_r(\nu^*K_-) \cong S^1 \times D^2_r$.
Let 
$$M_0 = D_R(T^*L_0) \cup (\cup_j \phi_{L_2^{\, j}}( D_r(T^*S_j)) \subset M.$$
Then $M_0$ is a retract of $M$. (In fact the homeomorphism $M \into M_0$ we mentioned in
\S \ref{plumbing} is a retract.)
\\
\newline 
Now, writing $D_r(T^*L_2^{\, j})$ for $\phi_{L_2^{\, j}}(D_r(T^*S_j))$, we have
$$E \backsimeq M_0 \cup (\Delta_0 \cup \Delta_2^{\, j} \cup \Delta_4)$$
$$= [D_R(T^*L_0)\cup ( \cup_j D_r(T^*L_2^{\, j}))] \cup (\Delta_0 \cup \Delta_2^{\, j} \cup \Delta_4)$$ 
$$\backsimeq  [D_R(T^*L_0)\cup \Delta_0] \cup [\cup_j (D_r(T^*L_2^{\, j}) \cup \Delta_2^{\, j} ) ]  \cup \Delta_4.$$
Note that $D_R(T^*L_0) \cup \Delta_0$ is homotopy equivalent to $\Delta_0$,
and we have 
$$K_j \subset \partial \Delta_0.$$
Now we define a certain  subset of $D_r(T^*L_2^{\, j})\cup \Delta_2^{\, j}$ which is diffeomorphic to a 2-handle $D^2 \times D^2$. Recall from section \ref{4mfds} that $L_4$ is the union of 
$$L_0 \setminus (\cup_j \phi_j(S^1 \times D^2_{r/2})) \text{ and } \cup_j L_4^{\, j},$$ 
where $L_4^{\, j} = \phi_{L_2^{\, j}}(\widetilde T)$. Now the definition of $\widetilde T$ (see \S \ref{surgeryh}) shows that $\widetilde T \setminus D_r(\nu^*K-)$ is the graph of a 1-form
$$\widetilde \alpha_4\co  S^3 \setminus K_- \into T^*(S^3 \setminus K_-).$$ 
Meanwhile 
$$\widetilde T \cap D_r(\nu^*K_-)  = D_{[r_0, r]}(\nu^*K_-)$$ 
for some $0< r_0 < r$.
Define $\widetilde H_2^{\, j} \subset D_r(T^*S_j)\cup \Delta_2^{\, j}$ by 
$$\widetilde H_2^{\, j} = \Delta_2^{\, j} \cup \{(p,v)\in T^*(S^3\setminus K_-) : p \in S^3 \setminus K_-, v = s \widetilde \alpha_4(p), \text{ for some } s \in [0,1] \} \cup D_{r_0}(\nu^*K_-^{\, j}).$$
Set
$$H_2^{\, j} = \phi_{L_2^{\, j}}(\widetilde H_2^{\, j}).$$ 
Then we claim that $H_2^{\, j}$ is homeomorphic to a 2-handle $D^2 \times D^2$,
where $\partial D^2 \times D^2$ corresponds to $D_{r_0}(\nu^*K_-^{\, j})$
and $D^2 \times \partial D^2$ corresponds to
$$\phi_{L_2^{\, j}}(\Gamma(\widetilde \alpha_4)) \cup S_{r_0}(\nu^*K_-^{\, j}).$$ 
(See figure \ref{homotopyequiv} for the picture corresponding to the case $\dim N = 2$, 
$\dim M = 2$, $\dim \Delta = 2$.) Furthermore we claim that $D_r(T^*L_2^{\, j}) \cup \Delta_2^{\, j}$ is homotopy equivalent to $H_2^{\, j}$ (by a retraction). (We omit the proofs of these claims.)

\begin{figure}
\begin{center}
\includegraphics[width=4in]{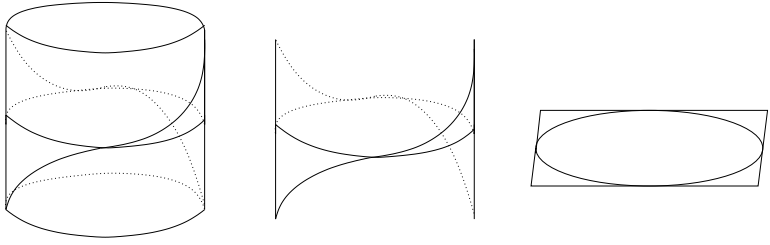} 
\caption{In the case $\dim N =2$, from left to right: 
$D_r(T^*S^1) \cup D^2 \backsimeq H_2 \cong \text{ a 2-handle}.$}
\label{homotopyequiv}
\end{center}
\end{figure}
Note the restriction 
$$\phi_{L_2^{\, j}}|_{ D_{r_0}(\nu^*K_-^{\, j})}\co  D_{r_0}(\nu^*K_-^{\, j}) \into L_0,$$
still makes sense on the retract $H_2^{\, j}$. And, as we noted before, this restriction is equal to 
the framing 
$$\phi_j\co  S^1 \times D^2_{r_0} \into L_0.$$
Now, using $[D_R(T^*L_0)\cup \Delta_0] \backsimeq \Delta_0$ and 
$(D_r(T^*L_2^{\, j}) \cup \Delta_2^{\, j} \backsimeq H_2^{\, j}$, we see that
$$[D_R(T^*L_0)\cup \Delta_0] \cup [\cup_j (D_r(T^*L_2^{\, j}) \cup \Delta_2^{\, j} )]$$
$$\backsimeq \Delta_0 \cup [\cup_j H_2^{\, j}].$$
Here, $\Delta_0 \cup [\cup_j H_2^{\, j}]$ is a partial handle decomposition (of $N$)
given by $D^4$ with $k$ 2-handles attached using the framings $\phi_j$.
Indeed, recall that  $H_2^{\, j} \cong D^2 \times D^2$ in such a way that 
$$\partial D^2 \times D^2 \cong D_{r_0}(\nu^*K_-^{\, j}),$$
and $\phi_{L_2^{\, j}}|_{D_{r_0}(\nu^*K_-^{\, j})} = \phi_j$.
Next, since $D^2 \times \partial D^2$ corresponds to
$$\Gamma(\alpha_2^{\, j}) \cup S_{r_0}(\nu^*K_-^{\, j}),$$
it follows that the boundary of $\Delta_0 \cup H_2^{\, j}$ is equal to $L_4$, which is the sphere where $\Delta_4$ is attached. To finish, we write 
$$E \backsimeq
[D_R(T^*L_0)\cup \Delta_0] \cup [\cup_j (D_r(T^*L_2^{\, j}) \cup \Delta_2^{\, j})]  \cup \Delta_4$$
$$\backsimeq \Delta_0 \cup [\cup_j H_2^{\, j}] \cup \Delta_4 \cong N.$$
For the last diffeomorphism, note that the left hand side  is a handle decomposition using the attaching maps $\phi_j$, hence it is a handle decomposition of $N$.
\end{proof}

\begin{remark}\label{refinements} Theorem \ref{main} can be refined in a couple of ways. First if $\xi$ is any gradient-like vector field for $f$ on $N$ we can construct a corresponding vectorfield  on $\widetilde N$, which is the symplectic lift of $\partial_t$ (up to a scaling function). To see the correspondence one compares the two vector fields on each handle, just like we did for $\pi|_{\widetilde N}$ and $f$.
Second, there is an anti-symplectic involution on $E$, say $\iota_{E}$, for which 
$\widetilde N$ is the fixed point set (as one would be the case if $E = T^*N$ and $\widetilde N = N$).
\\
\newline Here is a sketch of the construction of $\iota_{E}$. (Instead of anti-symplectic involution, we will say conjugation map.) On each local fibration $E_0^{loc}$, $E_2^{loc}$, $E_4^{loc}$
let $\iota_{E_i^{loc}}$ denote the standard conjugation. Also, let $\iota_{D^2}$ be the usual conjugation map on  $D^2 \subset \bbC$.  First we define a conjugation map $\iota_{E_0}$  on $E_0$ as follows. Think of $M$ as the plumbing (see \S \ref{plumbing}) of $D(T^*L_0)$ and $D(T^*L_2^{\,j})$, where $L_0 = L_2^{\,j} = S^3$. Then we define a conjugation map $\iota_M$  on $M$ as follows. The guiding idea is that $\iota_M$ is to have fixed point set equal to $L_0$. On $D(T^*L_0)$ it is defined to be $(x,y) \mapsto (x,-y)$; and on 
each $D(T^*L_2^{\,j})$ it is defined to be 
$$((x_1,x_2),(y_1,y_2)) \mapsto ((x_1,-x_2),(-y_1,y_2))$$
so that the fixed point set in $D(T^*L_2^{\,j})$ is $D(\nu^*K_-)$, which is identified with part of $L_0$. Then, when we glue $M \times D^2$ and $E_0^{loc}$ together to get $E_0$, the two conjugation maps
$\iota_M \times \iota_{D^2}$ and $\iota_{E_0^{loc}}$ will patch together to give a conjugation map on 
$\iota_{E_0}$ on $E_0$.
\\
\newline For $E_4$ and $E_2$ we define conjugation maps  $\iota_{E_2}$, $\iota_{E_4}$ in a similar way: On $M_2$, $\iota_{M_2}$ is defined the same way on $D(T^*L_0)$ but on $D(T^*L_2^{\, j})$ it is defined to be
$$((x_1,x_2),(y_1,y_2)) \mapsto ((-x_1,x_2),(y_1,-y_2))$$
so that the fixed point set is $D(\nu^*K_+)$, which is identified with part of $L_0$ to form $L_4'$.
Then  by combining $\iota_{M_2} \times \iota_{D^2}$ with  $\iota_{E_2^{loc}}$ and  $\iota_{E_4^{loc}}$ 
we get a conjugation maps $\iota_{E_2}$ on $E_2$ and  $\iota_{E_4}$ on $E_4$.
To combine $\iota_{E_0}$,$\iota_{E_2}$,$\iota_{E_4}$ to  get a conjugation map $\iota$ on $E$ one checks that the gluing maps $\pi_0^{-1}(1) \into \pi_2^{-1}(-1)$ and  $\pi_2^{-1}(1) \into \pi_4^{-1}(-1)$  map $\iota_{E_0}$ to $\iota_{E_0}$ and 
$\iota_{E_2}$ to $\iota_{E_4}$.
\end{remark}

\section{Construction of $\pi\co  E \into D^2$ and $N \subset E$, $\dim N = 3$}\label{3-manifolds}


Consider the case when $N$ is a closed 3-manifold, and $f\co  N \into \bbR$ is a 
self-indexing Morse function. The following  discussion applies equally well to self-indexing Morse functions $f\co  N \into \bbR$ with four critical values $0,n,n+1,2n+1$. 
See \S \ref{2nfiber} to see how things are much the same from one dimension to the next. In \S \ref{3mfds} we explained how to constructed a Weinstein manifold  $M$ with exact Lagrangian spheres $L_0, L_1^j, L_2^j, L_3$, one for each critical point of $f$.
Assume for simplicity of notation there is only one critical point of each index: 
$$x_3, x_2, x_1, x_0.$$
Thus $T = f^{-1}(3/2) \cong T^2$, a torus, and we have just one $\alpha$ curve and one $\beta$ curve.
For each $j=0,1,2,3$ we will define a Lefschetz fibration $\pi_j\co  E_j\into D(c_j)$ over a disk $D(c_j)$, where $c_j$ is the critical value of $\pi_j$, and $\pi_j$ has just one critical point
corresponding to $x_j$. Then we will fiber-connect sum the $\pi_j$'s together  to form  $\pi\co E\into S \cong D^2$.  Then we will show that $\pi$ has regular fiber isomorphic to $M$
and the vanishing spheres for suitable paths correspond to $L_0, L_2, L_2, L_3 \subset M$.
Finally, we will show there is an exact Lagrangian embedding $N \subset E$ 
such that $\pi|_{N} \cong f$.
\begin{remark} Of course, if  there were several critical points of, say  index 1, denoted $x_1^{\, j}$,
then $\pi_1$ would have one critical value $c_1$ and several critical points lying over $c_1$ corresponding to the $x_1^{\, j}$. The treatment is much the same in this case, as one can see from 
our treatment of $4-$manifolds earlier.
\end{remark}

Each $\pi_j$ will have a certain prescribed regular fiber $M_j$ (which will be a ``twist'' of $M$ depending on $j$, as in \S \ref{pi2}) and it will have one prescribed vanishing sphere (which will be a ``twist'' of $L_j \subset M$ depending on $j$). The base point will be $b = c_1+1/4$ and the vanishing paths will be as in figure \ref{figurevanishingpaths3mfds}.
\\
\newline  First,  set
$$M_1 = M$$
and construct  
$$\pi_1\co  E_1 \into D(c_1),$$
as in \S \ref{pi0} (but using the local model $\pi_{loc}^1$ based on $q_1=z_1^2 +z_2^2 -z_3^2$)
so that $\pi_1$ has fiber $\pi_1^{-1}(c_1+1)$ canonically isomorphic to  $M_1 = M$ with vanishing sphere corresponding to $L_1 \subset M$. 
\\
\newline Next, set 
$$M_2 = T^{L_2}_{\pi/2}(M)$$ 
where $T^{L_2}_{\pi/2}$ is analogous to the twist operation we defined in \S \ref{pi2}. 
It is easy to see that $M_2 = T^{L_2}_{\pi/2}(M)$ has an exact Lagrangian 
sphere say $L_2' \subset M_2$ which corresponds in a straight-forward way to $L_2 \subset M_1 =M$.
(The precise definition of $L_2$ is analogous to that of $(L_2^{\, j})' \subset M_2$ for any $j$, in \ref{pi2}.)
Define $\pi_2\co  E_2 \into D(c_2)$ as in \S \ref{pi0} (but using the local model $\pi_{loc}^2$ based on $q_2=z_1^2 -z_2^2 -z_3^2$) so that $\pi_2$ has fiber $\pi_2^{-1}(c_2+1)$ canonically isomorphic to $M_2$ and vanishing sphere corresponding to $L_2' \subset M_2$. 
\\
\newline Now define 
$$M_0 = T^{L_1}_{-\pi/2}(M).$$ 
By a construction analogous to that in \S \ref{pi4}, there is an exact  Lagrangian sphere $L_0'$ corresponding to $L_0 \subset M$. It is an ``untwisted'' version of $L_0$. (The precise definition is analogous to that of  $L_4' \subset M_2$ in \S \ref{pi4}.) Roughly speaking,
$L_0'$ is the union of $T \setminus N(\alpha)$ and $\phi_{L_1}(D_r(\nu^*K_-))$, where $N(\alpha)$ is a tubular neighborhood of $\alpha$, and $D_r(\nu^*K_-)$ is ``flat'', i.e. it is no longer ``twisted''. 
Construct $\pi_0\co  E_0 \into D(c_0)$ as in \S \ref{pi0} (but using the local model $\pi_{loc}^0$ based on $q_0= z_1^2 +z_2^2 +z_3^2$) so that $\pi_0$ is a Lefschetz fibration with fiber 
$\pi_0^{-1}(c_0+1)$ canonically isomorphic to $M_0$ and vanishing sphere corresponding to $L_0' \subset M_2$.
\\
\newline Finally let 
$$M_3 = M_2.$$
There is an exact Lagrangian submanifold $L_3' \subset M_3= M_2$ which corresponds to 
$L_3 \subset M$. Again, it is the ''untwisted'' version of $L_3 \subset M$, analogous to
$L_0' \subset M_0$ before.
Construct  $\pi_3\co  E_3 \into D(c_3)$  as in \S \ref{pi4} (using the local model 
$\pi_{loc}^3$ based on $q_3=-z_1^2 -z_2^2 -z_3^2$) so that $\pi_3$ is a Lefschetz fibration with fiber 
$\pi_3^{-1}(c_3-1)$ canonically isomorphic to $M_3$.
(Note we have used $c_3-1$ here rather than $c_3+1$, as in \S \ref{pi4}.)
\\
\newline In summary, there are canonical isomorphisms
$$\pi_0^{-1}(c_0+1) \cong T_{-\pi/2}^{L_1}(M), \, \pi_1^{-1}(c_1+1) \cong M,$$
$$\, \pi_2^{-1}(c_2+1) \cong T_{\pi/2}^{L_2}(M), \, \pi_3^{-1}(c_3-1)\cong T_{\pi/2}^{L_2}(M),$$
and the vanishing spheres for $\pi_0, \ldots, \pi_3$ correspond to 
$$L_0' \subset  T_{-\pi/2}^{L_1}(M), \, L_1 \subset M, \, L_2' \subset  T_{\pi/2}^{L_2}(M), \text{ and } \, L_3' \subset  T_{\pi/2}^{L_2}(M).$$

Now, by an analogue of lemma \ref{leftfiber=M}, there are canonical isomorphisms
$$\pi_1^{-1}(c_1-1) \cong   T_{-\pi/2}^{L_1}(M), \text{ and } \pi_2^{-1}(c_2+1) \cong T_{-\pi/2}^{L_2}(T_{\pi/2}^{L_2}(M)) \cong M.$$
Thus we have identifications 
$$\pi_0^{-1}(c_0 +1) \cong \pi_1^{-1}(c_1-1), \pi_1^{-1}(c_1+1) \cong \pi_2^{-1}(c_2-1),\pi_2^{-1}(c_2 +1) \cong \pi_3^{-1}(c_3-1)$$
and we use these to fiber connect sum  $\pi_0, \pi_1, \pi_2, \pi_3$ (see \S \ref{pi}) to get a Lefschetz fibration
$$\pi\co  E \into S$$
where $S \cong D^2$. 
\\
\newline To find the vanishing spheres of $\pi$, choose the base point at $c_1 +1/4$ and choose transport maps analogous to those in 
\S \ref{computingVC}. That is they are straight lines in $\bbR$, except when they approach a critical value, in which case they do a half arc in the lower half plane to avoid the critical value.  See figure \ref{figurevanishingpaths3mfds}.
\begin{figure}
\begin{center}
\includegraphics[width=4in]{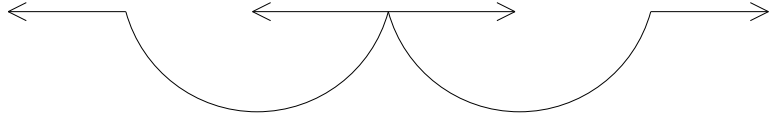}
\caption{The four vanishing paths for $(E,\pi)$ in the case when 
$\dim N = 3$.}
\label{figurevanishingpaths3mfds}
\end{center}
\end{figure}

Then, by an analogue of lemma \ref{halftwist}, there are symplectomorphisms
$$\tau_1\co  M \into T_{-\pi/2}^{L_1}(M), \text{ and } \tau_2\co  M \into T_{\pi/2}^{L_2}(M)$$
such that 
$$\tau_1(L_0) = L_0', \, \tau_2(L_2) = L_2', \, \text{ and }\tau_2(L_3) = L_3'.$$
Moreover, under the above identifications, $\tau_1$ and $\tau_2$ correspond to the transport maps along the half arcs in the lower half plane, for $0< s \leq 1$:
$$\pi_1^{-1}(c_1 +s) \into \pi_1^{-1}(c_1 -s), \text{ and } \pi_1^{-1}(c_1 +s) \cong  \pi_2^{-1}(c_2 -s) \into \pi_2^{-1}(c_1 +s).$$ 
(Here, the isomorphism $\pi_1^{-1}(c_1 +s) \cong  \pi_2^{-1}(c_2 -s)$ is the transport map along 
the segment $[c_1+s, c_2-s]$; it does not have much geometric effect.)
By an argument similar to that in the proof of lemma \ref{vanishingcycles}, it is easy to see that the vanishing spheres in  
$\pi_1^{-1}(c_1 +s) \cong M$  correspond exactly to $L_0, L_1, L_2, L_3 \subset M$. 
\\
\newline To construct $N \subset E$, we take $N_0 \subset E_0$ and $N_3\subset E_3$ 
to be Lefschetz thimbles over $[c_0, c_0+s]$ and $[c_3-s, c_3]$ for some small $s>0$, as before.
These correspond to the 0- and 3-handles of  a Milnor-type handle decomposition for $N$.
Next, we define a subset  $N_1^{loc} \subset E_{loc}^1 \subset E_1$.
Let
$$f_1= q_1|_{E_{loc}^1 \cap \bbR^3} \co  E_{loc}^1 \cap \bbR^3 \into \bbR,$$
and let 
$$N_1^{loc} = E_{loc}^1 \cap \bbR^3  \cap f_1^{-1}([c_1-1/2, c_1+1/2]).$$
This corresponds to the $1-$handle of $N$.  
Now recall that $E_1$ is a certain quotient  of the disjoint union 
$$[M_1 \setminus (\phi_{L_1}(D_{r/2}(T^*S^2)))] \times D^2(c_1) \sqcup E_{loc}^1.$$
Recall $\phi_{L_1}$ is such that $\phi_{L_1}(D_{r/2}(\nu^*K_-))$ is a tubular neighborhood of
$\alpha$ in $T =T^2$, namely $\phi^{\alpha}(S^1 \times D^1_{1/2})$.
Define \begin{eqnarray*}
N_1^{triv} = [T \setminus \phi_{L_1}(D_{r/2}(\nu^*K_-))] \times [c_2-1/2, c_2 +1/2] \\
\subset [M_1 \setminus \phi_{L_1}(D_{r/2}(T^*S^2)))] \times D^2_{r/2}(c_2)
\end{eqnarray*}
Then, $N_1 \subset E_1$ is defined to be the union
$$N_1 = N_1^{loc} \cup N_1^{triv}.$$
We claim that $N_1$ is diffeomorphic to $f^{-1}([1-s, 1+s])$ for some small $s>0$, so that it is a cobordism between $S^2$ and $T^2$.
Indeed, following the proof of Theorem \ref{main} we see that in the quotient space $E_1$, 
$$N_1^{loc}\cap (k^1)^{-1}((4(r/2)^2,4r^2]) \subset N_1^{loc}$$ is identified with 
$$\phi^{\alpha}(S^1 \times D_{(r/2, r]}^2) \times [c_2-1/2, c_2+1/2] \subset N_1^{triv}.$$
Therefore the union $N_1$ is smooth, and because we are using the correct framing $\phi^\alpha$ (in Milnor's handle-type decomposition \cite[pages 27-32]{Milnor}), $N_1$ is  diffeomorphic to $f^{-1}([1-s, 1+s])$ for some small $s>0$.
\\
\newline Moreover, under the canonical isomorphism $\pi_1^{-1}(c_1-1) \cong   T_{-\pi/2}^{L_1}(M)$, the boundary component of $N_1$ which lies in $\pi_1^{-1}(c_1-s)$ corresponds 
precisely to $L_0'$. Indeed, that boundary component of $N_1$ is by definition 
equal to the union of  
$$[T \setminus \phi^{\alpha}(S^1 \times D^1_{r/2})] \times \{c_1 -1/2\}$$ 
and
$$N_1^{loc}\cap f^{-1}(c_1 -1/2) \cong S^0 \times D^2.$$ 
Under the isomorphism
$\pi_1^{-1}(c_1-1) \cong   T_{-\pi/2}^{L_1}(M)$, this union corresponds to 
the union of $[T \setminus \phi^{\alpha}(S^1 \times D^1_{r/2})] \times \{c_1 -1/2\}$ and
$D_r(\nu^*K_+) \subset D_r(T^*L_1)$. This is precisely $L_0'$, by definition.
This implies that $N_1$ and $N_0$ glue up smoothly (with over-lapping boundary)
to yield a manifold diffeomorphic to $f^{-1}([0,1+s])$ for some small $s>0$.
\\
\newline We define $N_2^{loc} \subset E_{loc}^1 \subset E_1$ and
$f_2\co  N_2^{loc} \into [c_2-1/2, c_2+1/2]$ in a similar way.
Then 
$$N_2^{triv} \subset [M_2 \setminus \phi_{L_2'}(D_{r/2}(T^*S^2))] \times [c_2-1/2, c_2+1/2]$$  
is defined to be 
$$N_2^{triv} = L_3' \setminus \phi_{L_2'}(D_{r/2}(\nu^*K_+))\times [c_2-1/2, c_2+1/2].$$
As before $N_2^{triv}$ and  $N_2^{loc}$ glue together to form a  manifold diffeomorphic to
$f^{-1}([c_2-s, c_2+s])$. The boundary component of 
$$N_2 = N_2^{loc} \cup N_2^{triv}$$
which lies in $\pi_2^{-1}(c_2 +1/2)$ corresponds to $L_3'$. (This is immediate in this case.) This means $N_2$ and
$N_3$ glue up smoothly to form a manifold diffeomorphic to $f^{-1}([2-s,3])$, for some small $s>0$.
And, as before, the boundary component of 
$$N_2 = N_2^{loc} \cup N_2^{triv}$$
which lies in $\pi_2^{-1}(c_2 -1/2)$ corresponds to $T$ under the canonical isomorphism
$\pi_2^{-1}(c_2 -1/2) \cong M$. Therefore $N_0 \cup N_1$ and $N_2 \cup N_3$ glue up smoothly along
$T$ to form a manifold 
$$\widetilde N =  N_0 \cup N_1 \cup N_2 \cup N_3$$ 
diffeomorphic to $N$. To see that $\pi$ is equivalent to $f$ we just compare $\pi|_{N_j}$ to
$f$ on the corresponding handle in the Milnor type decomposition of $N$.

%
%
%
%

\end{document}